\documentclass[11pt]{amsart}

\usepackage{amsmath,amssymb,latexsym,soul,cite,mathrsfs}

\usepackage{color,enumitem,graphicx}
\usepackage[colorlinks=true,urlcolor=blue,
citecolor=red,linkcolor=blue,linktocpage,pdfpagelabels,
bookmarksnumbered,bookmarksopen]{hyperref}
\usepackage[english]{babel}

\usepackage[left=2.9cm,right=2.9cm,top=2.8cm,bottom=2.8cm]{geometry}
\usepackage[hyperpageref]{backref}

\usepackage[colorinlistoftodos]{todonotes}
\makeatletter
\providecommand\@dotsep{5}
\def\listtodoname{List of Todos}
\def\listoftodos{\@starttoc{tdo}\listtodoname}
\makeatother

\numberwithin{equation}{section}





\newtheorem{theorem}{Theorem}[section]
\newtheorem{proposition}[theorem]{Proposition}
\newtheorem{lemma}[theorem]{Lemma}
\newtheorem{corollary}[theorem]{Corollary}

\newtheorem{claim}[theorem]{Claim}

\newtheorem{definition}[theorem]{Definition}

\newcommand\R{\mathbb R}

\begin{document}
	
	\title[EXISTENCE OF SOLUTION FOR a class of indefinite variational]
	{Existence of solution for a class of indefinite variational problems  with discontinuous nonlinearity}

	\author{Claudianor O. Alves}
	\author{Geovany F. Patricio}

	\address[Claudianor O. Alves]{\newline\indent Unidade Acad\^emica de Matem\'atica
		\newline\indent 
		Universidade Federal de Campina Grande,
		\newline\indent
		58429-970, Campina Grande - PB - Brazil}
	\email{\href{mailto:coalves@mat.ufcg.edu.br}{coalves@mat.ufcg.edu.br}}
	
	\address[Geovany F. Patricio]
	{\newline\indent Unidade Acad\^emica de Matem\'atica
		\newline\indent 
		Universidade Federal de Campina Grande,
		\newline\indent
		58429-970, Campina Grande - PB - Brazil}
	\email{\href{fernandes.geovany@yahoo.com.br}{fernandes.geovany@yahoo.com.br}}

	\pretolerance10000
	
	
	\begin{abstract}
		\noindent This paper concerns the existence of a nontrivial solution for the following problem
		\begin{equation}
		\left\{\begin{aligned}
		-\Delta u + V(x)u & \in \partial_u F(x,u)\;\;\mbox{a.e. in}\;\;\mathbb{R}^{N},\nonumber \\
		u \in H^{1}(\mathbb{R}^{N}). 
		\end{aligned}
		\right.\leqno{(P)}
		\end{equation} 
where $F(x,t)=\int_{0}^{t}f(x,s)\,ds$, $f$ is a $\mathbb{Z}^{N}$-periodic Caratheodory function and $\lambda=0$ does not belong to the spectrum of $-\Delta+V$. Here, $\partial_t F$ denotes the generalized gradient 
of $F$ with respect to variable $t$.	
\end{abstract}

	\subjclass[2019]{Primary:35J15, 35J20; Secondary: 26A27} 
	\keywords{Elliptic problem, Variational methods,  Discontinuous nonlinearity}

	\maketitle

	\section{Introduction}
	
	At the last years a special attention has been given to indefinite problem of the type  
$$
\left\{\begin{aligned}
-\Delta u + V(x)u &= f(x,u)\;\;\mbox{in}\;\;\mathbb{R}^{N} \\
u \in H^{1}(\mathbb{R}^{N}),
\end{aligned}
\right. \leqno{(P_1)}
$$
where $ N \geq 2$, $f$ and $V$ are continuous functions, periodic with respect to $x$-variable with $f$ satisfying some technical conditions and 
$$
0 \notin \sigma(-\Delta+V),\;\;\mbox{the spectrum of}\;\; -\Delta +V. \leqno{(V)}
$$  
Hereafter, we understand that $(P)$ is  indefinite when $u=0$ is not a local minimum for the corresponding energy functional. Probably the first articles to consider condition $(V)$ were
Alama and Li \cite{AmLi}, Angenent \cite{Angenent}, and Coti Zelati and Rabinowitz \cite{Coti}. 

Of particular interest in our work is the article by  Kryszewski and Szulkin \cite{Kryszewski} where the authors have studied the existence of solution for $(P_1)$ by assuming the following conditions on $f$: 
\begin{equation}\label{03}
|f(x,t)| \leq c(|t|+ |t|^{p-1}), \;\; \forall\; t \in \mathbb{R}\;\;\mbox{and}\;\; x \in \mathbb{R}^{N},
\end{equation}
and
\begin{equation}\label{04}
0< \theta F(x,t) \leq t f(x,t)\;\;\forall\; t \in \mathbb{R},\;\;F(x,t)=\int_{0}^{t} f(x,s) ds
\end{equation}
for some $c>0$, $\theta>2$ and $2<p<2^{*}$ where $2^{*}=\frac{2N}{N-2}$ if $N\geq 3$ and $2^{*}=+\infty$ if $N=2$. The above hypotheses guarantee that the energy functional associated with $(P_1)$ given by
\begin{equation} \label{varphi1}
\varphi(u)=\frac{1}{2} \int_{\mathbb{R}^{N}} (|\nabla u|^{2}+V(x)|u|^{2})dx- \int_{\mathbb{R}^{N}} F(x,u) dx,\; u \in  H^{1}(\mathbb{R}^{N}),
\end{equation}
is well defined and belongs to $C^{1}(H^{1}(\mathbb{R}^{N}), \mathbb{R})$. By $(V)$, there is an equivalent inner product $\langle \, , \,  \rangle$ in $H^{1}(\mathbb{R}^{N})$ such that
\begin{equation} \label{varphi2}
\varphi(u)= \frac{1}{2}||u^{+}||^{2}-\frac{1}{2}||u^{-}||^{2}- \int_{\mathbb{R}^{N}} F(x,u) dx,
\end{equation}
where $||u||=\sqrt{\left<u,u\right>}$ and $H^{1}(\mathbb{R}^{N})= E^{+}\oplus E^{-}$ corresponds to the spectral decomposition of $-\Delta+ V$ with respect to the positive and negative part of the spectrum with $u=u^{+}+u^{-}$, where $u^{+} \in E^{+}$ and $u^{-} \in E^{-}$. In order to show the existence of solution for $(P_1)$,  Kryszewski and Szulkin introduced a new and interesting generalized link theorem. Since this linking theorem works very well for a large class of indefinite problems, many authors have used it to study related problems, see for example, Arioli and Szulkin \cite{Arioli}, Bartsch and Ding \cite{Thomas}, Chabrowski and Szulkin \cite{Chabrowski}, Szulkin and Zou \cite{A. Szulkin and Zou}, Schechter and Zou \cite{Schechter}, Ackermann \cite{Ackermann}, do \'O and Ruf  \cite{do O}, Furtado and Marchi \cite{Furtado} and Tang \cite{X.H2, X.H, X.H1}.

In \cite{G.B Li}, Li and Szulkin have improved the generalized link theorem obtained in \cite{Kryszewski} to establish the existence of solution for a class of indefinite problem with $f$ being asymptotically linear at infinity.  Chen and Wang \cite{Chen} also showed a new infinite-dimensional linking theorem, which was also inspired by  \cite{Kryszewski}, to study a new class of indefinite problem.

The existence of solution for $(P_1)$ has been obtained of a different way in Pankov \cite{Pankov1}, Pankov and Pfl\"uger \cite{Pankov2} and Szulkin and Weth \cite{Szulkin}. In   \cite{Pankov1} and \cite{Pankov2}, the existence of ground state solution was established by supposing $f \in C^{1}(\mathbb{R}^N,\mathbb{R})$ and that there is $\theta \in (0,1)$ such that
$$
0<t^{-1}f(x,t)\leq \theta f'_t(x,t), \quad \forall t \not=0 \quad \mbox{and} \quad x \in \mathbb{R}^N. \eqno({h_3})
$$
In  \cite{Pankov1}, Pankov found a ground state solution by minimizing the energy functional  $J$ on the set
$$
\mathcal{O}=\left\{u\in H^{1}(\R^{N})\setminus E^{-}\ ;\ J'(u)u=0\text{ and }J'(u)v=0,\forall\ v\in E^{-}\right\}. 
$$ 
The reader is invited to see that if $E^{-}=\{0\}$,  the set $\mathcal{O}$ is exactly the Nehari manifold associated with $J$. Hereafter, we say that  $u_0 \in H^{1}(\mathbb{R}^{N})$ is a {\it ground state solution} if 
$$
J'(u_0)=0 \quad \mbox{and} \quad J(u_0)=\inf_{w \in \mathcal{O}}J(w).
$$

In \cite{Szulkin}, Szulkin and Weth established the existence of ground state solution for problem $(P_1)$ by completing the study made in  \cite{Pankov1}, in the sense that, they also minimize the energy functional on  $\mathcal{O}$, however they have used more weaker conditions on $f$, for example $f$ is continuous, $\mathbb{Z}^N$-periodic in $x$ and satisfies 
$$
|f(x,t)|\leq C(1+|t|^{p-1}), \;\; \forall t \in \mathbb{R} \quad \mbox{and} \quad x \in \mathbb{R}^N  \eqno{(h_4)}
$$ 
for some $C>0$ and $p \in (2,2^{*})$.
$$
f(x,t)=o(t) \,\,\, \mbox{uniformly in } \,\, x \,\, \mbox{as} \,\, |t| \to 0. \eqno{(h_5)}
$$
$$
F(x,t)/|t|^{2} \to +\infty \,\,\, \mbox{uniformly in } \,\, x \,\, \mbox{as} \,\, |t| \to +\infty, \eqno{(h_6)}
$$
and
$$
t\mapsto f(x,t)/|t| \,\,\, \mbox{is strictly increasing on} \,\,\, \mathbb{R} \setminus \{0\}. \eqno{(h_7)}
$$
The same approach was used by Alves and Germano \cite{AG,AG2},  and Zhang, Xu and Zhang \cite{ZXZ, ZXZ2}.

For the reader interested in indefinite problem there is a rich literature and we would like to cite the papers by Ding and Shixia \cite{Ding Y},  Qianqiao and Jaroslaw \cite{Guo}, Liu \cite{SLiu}, Xiaoyan and Xianhua \cite{Tang X}, Hui, Tang \cite{X.H2, X.H, X.H1},  Hui, Xub and Fubao \cite{Z. Hui} and their references.

After a bibliography review we have observed that there is no paper involving the problem $(P_1)$ with $f$ being a discontinuous function, that is, there is no paper that consider the existence of solution for problem like 
		\begin{equation}
\left\{\begin{aligned}
-\Delta u + V(x)u & \in \partial_u F(x,u)\;\;\mbox{a.e. in}\;\;\mathbb{R}^{N},\nonumber \\
u \in H^{1}(\mathbb{R}^{N}). 
\end{aligned}
\right.\leqno{(P)}
\end{equation}  
This type of problem becomes interesting, at least of the mathematics point of view, because we cannot use the classical variational methods, since the energy functional is only locally Lipschitz. Here, we intend to prove the same result found in  Kryszewski and Szulkin \cite{Kryszewski} by allowing $f$ to be a discontinuous function. One of the main difficulty in the present paper was to prove a version of the abstract framework developed in  \cite{Kryszewski} for locally Lipschitz, since in that paper the authors worked with  $C^{1}$-functionals, for more details see Section 5.

The interest in the study of nonlinear partial differential equations with discontinuous nonlinearities has increased because many free boundary problems arising in mathematical physics may be stated in this form. Among these problems, we have the seepage surface problem and the Elenbaas equation, see for example \cite{Chang1,chang2, chang3}.

A rich literature is available for problems with discontinuous nonlinearities, and we refer the reader to Chang \cite{Chang1}, Ambrosetti and Badiale \cite{Badiale}, Cerami \cite{cerami}, Alves et al. \cite{alves1}, Alves et al. \cite{alves2}, Alves and Bertone \cite{alves3}, Alves and Nascimento \cite{alves4}, Badiale \cite{badiale2}, Dinu \cite{dinu} and their references. Several techniques have been developed or applied in their study, such as variational methods for nondifferentiable functionals, lower and upper solutions, global branching, and the theory of multivalued mappings.

Motivated by ideas found in \cite{Kryszewski}, we study the existence of nontrivial solution for problem $(P)$ by supposing the following conditions on function $f$:
\begin{itemize}
	\item [($f_{1}$)] $f:\mathbb{R}^{N}\times \mathbb{R}\rightarrow \mathbb{R}$ is a Caratheodory function, $\mathbb{Z}^{N}$-periodic with respect to variable $x$ and the functions 	
	$$
	\underline{f}(x,t)= \lim_{r\downarrow 0} ess \inf\{f(x,s); |s-t|<r\}
	$$
	and
	$$
	\overline{f}(x,t)= \lim_{r\downarrow 0} ess \sup\{f(x,s); |s-t|<r\}
	$$
	are $N$-measurable functions, see Chang \cite{Chang1,chang3} for more details. 
	
	\item [($f_{2}$)] There exists $c>0$ such that 
	$$|f(x,t)|\leq c(1+|t|^{p-1}),\;\forall\;t \in \mathbb{R},\;\forall\;x\in \mathbb{R}^{N},$$
	with $p>2$ if $N=1,2$ and $p\in (2,2^{*})$ if $N\geq 3$.

	\item  [($f_{3}$)] $f(x,t)=o(|t|)$ 	uniformly with respect to $x \in \mathbb{R}^{N}$ as $|t|\rightarrow 0$.
	
	\item [($f_{4}$)] There exists $\theta>2$ such that
	$$0<\theta F(x,t)\leq \min\{t f(x,t),t\xi\}, \;\;\;\forall\; t \in \mathbb{R} \quad \mbox{and} \quad \xi \in \partial_t F(x,t) ,$$
	uniformly with respect to $x \in \mathbb{R}^{N}$, where 
	$$F(x,t)=\int_{0}^{t}f(x,s)ds.$$

\end{itemize}

\vspace{0.5 cm}

Our main result is the following:
\begin{theorem} \label{Teorema1}
	Assume $(V)$ and $(f_{1})-(f_{4})$. Then, the problem $(P)$ has a nontrivial solution.
\end{theorem}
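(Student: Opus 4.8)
The plan is to recast $(P)$ as the search for a nontrivial critical point, in the sense of Clarke's generalized gradient, of the locally Lipschitz functional
$$\varphi(u)=\frac{1}{2}\|u^{+}\|^{2}-\frac{1}{2}\|u^{-}\|^{2}-\int_{\mathbb{R}^{N}}F(x,u)\,dx,\qquad u=u^{+}+u^{-}\in E^{+}\oplus E^{-}=H^{1}(\mathbb{R}^{N}).$$
By $(f_{2})$ the term $\Psi(u)=\int_{\mathbb{R}^{N}}F(x,u)\,dx$ is locally Lipschitz on $H^{1}(\mathbb{R}^{N})$, and by Chang's theory its generalized gradient satisfies $\xi(x)\in[\underline{f}(x,u(x)),\overline{f}(x,u(x))]$ a.e. for every $\xi\in\partial\Psi(u)$. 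Hence any $u$ with $0\in\partial\varphi(u)$ furnishes a measurable selection $\rho$ with $\rho(x)\in[\underline{f}(x,u(x)),\overline{f}(x,u(x))]$ solving $-\Delta u+V(x)u=\rho$, i.e.\ a solution of $(P)$. Thus everything reduces to producing a nonzero critical point of $\varphi$.

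First I would check the Kryszewski--Szulkin linking geometry. Using $(f_{3})$ and the subcritical growth $(f_{2})$ one gets $\int_{\mathbb{R}^{N}}F(x,u)\,dx=o(\|u\|^{2})$ as $\|u\|\to0$, which yields constants $\rho,\alpha>0$ with $\varphi(u)\geq\alpha$ for all $u\in E^{+}$ with $\|u\|=\rho$. On the other hand, the superquadratic inequality $\theta F(x,t)\leq tf(x,t)$ in $(f_{4})$ forces $F(x,t)\geq c_{1}|t|^{\theta}-c_{2}$ for suitable constants $c_{1},c_{2}>0$, so that, after fixing $z_{0}\in E^{+}$ with $\|z_{0}\|=1$, one has $\varphi\leq0$ on the boundary of
$$Q=\{\,v+sz_{0}\ :\ v\in E^{-},\ \|v\|\leq R,\ 0\leq s\leq R\,\}$$
for $R$ large enough. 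This is the standard linking of $\partial Q$ with the sphere $S=\{u\in E^{+}:\|u\|=\rho\}$.

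The main obstacle is precisely the step flagged by the authors: since $E^{+}$ is generally infinite-dimensional, the classical linking theorem is unavailable and one must rely on the infinite-dimensional linking theorem of Kryszewski and Szulkin, whose deformation is built in a topology $\mathcal{P}$ mixing the weak topology of $E^{-}$ with the norm topology of $E^{+}$. As $\varphi$ is only locally Lipschitz, this machinery has to be rebuilt in the nonsmooth category: I would replace the $C^{1}$-pseudogradient flow by a descent flow generated by a locally Lipschitz pseudogradient field adapted to $\partial\varphi$, verify that it is admissible for the $\mathcal{P}$-topology, and derive the corresponding deformation lemma. This produces a Cerami sequence $(u_{n})$ at the minimax level $c\geq\alpha>0$, that is, $\varphi(u_{n})\to c$ and $(1+\|u_{n}\|)\,m(u_{n})\to0$, where $m(u_{n})=\min\{\|\xi\|_{*}:\xi\in\partial\varphi(u_{n})\}$.

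It then remains to extract the solution. The Ambrosetti--Rabinowitz condition in $(f_{4})$ gives, in the usual way, boundedness of $(u_{n})$ in $H^{1}(\mathbb{R}^{N})$. Since $f$ and $V$ are $\mathbb{Z}^{N}$-periodic the problem lacks compactness, so I would use a Lions-type vanishing lemma: if $\int_{B_{R}(y)}|u_{n}|^{2}\,dx\to0$ uniformly in $y\in\mathbb{R}^{N}$, then $u_{n}\to0$ in $L^{q}(\mathbb{R}^{N})$ for $q\in(2,2^{*})$, whence the nonlinear terms vanish and, together with $m(u_{n})\to0$, this forces $\|u_{n}\|\to0$ and $c=0$, a contradiction. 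Hence there exist $y_{n}\in\mathbb{Z}^{N}$ and $\beta>0$ with $\int_{B_{R}(y_{n})}|u_{n}|^{2}\,dx\geq\beta$, and by periodicity the translates $\tilde{u}_{n}=u_{n}(\cdot+y_{n})$ form an equivalent, non-vanishing Cerami sequence. Passing to a weak limit $\tilde{u}_{n}\rightharpoonup u\neq0$, the last and most delicate point is to pass to the limit in the set-valued relation: using the upper semicontinuity and weak closedness of Clarke's generalized gradient, controlled through the bracket $[\underline{f},\overline{f}]$, together with $m(\tilde{u}_{n})\to0$, I would conclude $0\in\partial\varphi(u)$, so that $u$ is the desired nontrivial solution of $(P)$.
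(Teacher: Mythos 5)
Your proposal follows essentially the same route as the paper: the Clarke--Chang nonsmooth framework for $\varphi$ on $E^{+}\oplus E^{-}$, the Kryszewski--Szulkin linking geometry, a locally Lipschitz rebuild of their deformation/degree machinery in the mixed topology, boundedness of the minimax sequence via $(f_4)$, Lions' vanishing lemma combined with $\mathbb{Z}^{N}$-translations, and the weak-$*$ closedness of $\partial\Psi$ (Proposition \ref{7}) to pass to the limit in the inclusion. Two small repairs are needed: on $\mathbb{R}^{N}$ the superquadratic lower bound must be taken in the form $F(x,t)\geq c_{\delta}|t|^{\theta}-\delta|t|^{2}$ rather than $c_{1}|t|^{\theta}-c_{2}$ (a constant is not integrable over $\mathbb{R}^{N}$), and in the strongly indefinite setting the ``usual'' Ambrosetti--Rabinowitz argument only controls $\int \rho_{n}u_{n}$, so one still needs the pointwise estimates $|\rho|^{2}\leq \tilde{c}\,t\rho$ for $|t|\leq 1$ and $|\rho|^{p/(p-1)}\leq \tilde{c}\,t\rho$ for $|t|\geq 1$ to bound $\|u_{n}^{+}\|$ and $\|u_{n}^{-}\|$ separately, exactly as in Lemma \ref{69}.
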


Before concluding this introduction, we would like to say that an example of a function $f$ that satisfies the conditions above is the following
$$
f(t)=H_e(|t|-a)|t|^{p-2}t+|t|^{q-2}t, \quad \forall t \in \mathbb{R},
$$ 
with $a > 0$, $2<q<p<2^*$ where 
$$
2^*=
\left\{
\begin{array}{l}
\frac{2N}{N-2}, \quad \mbox{if} \quad N \geq 3\\
+\infty, \quad \mbox{if} \quad N=1,2,
\end{array}
\right.
$$ 
and $H_e:\mathbb{R} \to \mathbb{R}$ denotes the Heaviside function, that is, 
$$
H_e(t)=
\left\{
\begin{array}{l}
0, \quad \mbox{if} \quad t \leq 0\\
1, \quad \mbox{if} \quad t > 0.
\end{array}
\right.
$$ 


The plan of the paper is as follows. In Section 2 we recall some definitions and basic results on the critical point theory of Locally Lipschitz Functionals. In Section 3 we show some preliminaries about Orlicz spaces. In Section 4 we study the properties of the functional $\Psi$. In Section 5 we prove a new Linking Theorem for Locally Lipschitz Functionals.  Finally, in Section 6, we employ the new Linking Theorem to establish the existence of a nontrivial solution for problem $(P)$. 

\vspace{0.5 cm}

\noindent \textbf{Notation:} From now on, otherwise mentioned, we use the following notations:
\begin{itemize}

	\item $X^{*}$ denotes the dual topological space of $X$ and $||\;\;\;||_{*}$ denotes the norm in $X^{*}$.
	
	\item $B_r(u)$ is an open ball centered at $u \in X$ with radius $r>0$.
	
	
	\item  $||\,\,\,||_p$ denotes the usual norm of the Lebesgue space $L^{p}(\mathbb{R}^N)$, for $p \in [1,+\infty]$.
	
	\item  $||\,\,\,||_{H}$ denotes the usual norm of the Orlicz space $L^{H}(\mathbb{R}^N)$ associated the $N$-function $H$.
	
	\item If $u:\mathbb{R}^N \to \mathbb{R}$ is mensurable function, the integral $\int_{\mathbb{R}^N}u\,dx$ will be denoted by $\int_{\mathbb{R}^N}u$.
	
	\item $o_{n}(1)$ denotes a real sequence with $o_{n}(1)\to 0$ as $n \to +\infty$.
	
	\item $l:X\rightarrow \mathbb{R}$ denotes a continuous linear functional.
	
\end{itemize}

\section{Basic results from nonsmooth analysis}\label{08}
	
	In this section, for the reader's convenience, we recall some definitions and basic results on the critical point theory of Locally Lipschitz Functionals as developed by Chang \cite{Chang1}, Clarke \cite{Clarke, Clarke1}, and Grossinho and Tersin \cite{rosario}.
	
	Let $(X, ||\;||)$ be a real Banach space. A functional $I :X \rightarrow \mathbb{R} $ is locally Lipschitz, $I\in Lip_{loc}(X, \mathbb{R})$ for short, if given $u \in X$ there is an open neighborhood $V:=V_{u} \subset X$ of $u$, and a constant $K=K_{u}>0$ such that
	
	\begin{equation*}
	|I(v_{2})-I(v_{1})| \leq K ||v_{1}-v_{2}||,\;\;v_{i} \in V,\;i=1,2.
	\end{equation*}
	The generalized directional derivative of $I$ at $u$ in the direction of $v \in X$ is defined by
	\begin{equation*}
	I^{\circ}(u;v)= \limsup_{h\rightarrow 0, \delta \downarrow 0}\frac{1}{\delta}\left(I(u+h+\delta v)-I(u+h)\right).
	\end{equation*}
	Hence, $I^{\circ}(u; \cdot)$ is continuous, convex and its subdifferential at $z \in X$ is defined by
	\begin{equation*}
	\partial I^{\circ}(u; z)=\{\mu \in X^{*}\;;\; I^{\circ}(u;v) \geq \left<\mu, v-z\right>\,;\forall\; v \in X \}
	\end{equation*}
where $\left<\cdot, \cdot\right>$ is the duality pairing between $X^{*}$ and $X$. The generalized gradient of $I$ at $u$ is the set
	\begin{equation*}
	\partial I(u)= \{\xi \in X^{*}\;;\; I^{\circ}(u;v) \geq \left<\xi, v\right>\,;\forall\; v \in X \}.
	\end{equation*} 
Moreover, we denote by $\lambda_{I}(u)$ the following real number 
$$
\lambda_{I}(u):=\min\{||\xi||_{X^{*}}: \xi \in \partial I(u)\}.
$$
We recall that $u \in X$ is a critical point of $I$ if $0 \in \partial I(u)$, or equivalently, when $\lambda_I(u)=0$.
\begin{lemma}
		If $I$ is continuously differentiable to Fr\'echet in an open neighborhood of $u \in X$, we have $\partial I(u)= \{I'(u)\}$.
	\end{lemma}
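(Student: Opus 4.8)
The plan is to show that, under the $C^{1}$ hypothesis, the generalized directional derivative collapses to the ordinary one, namely $I^{\circ}(u;v)=\left<I'(u),v\right>$ for every $v\in X$; the set equality $\partial I(u)=\{I'(u)\}$ then follows by a short symmetry argument. First I would verify the inclusion $\{I'(u)\}\subseteq\partial I(u)$, which is immediate once the identity is known: the inequality $I^{\circ}(u;v)\geq\left<I'(u),v\right>$ then holds with equality, so $I'(u)\in\partial I(u)$. For the reverse inclusion, take any $\xi\in\partial I(u)$. By definition $I^{\circ}(u;v)\geq\left<\xi,v\right>$ for all $v$; substituting $I^{\circ}(u;v)=\left<I'(u),v\right>$ gives $\left<I'(u)-\xi,v\right>\geq 0$ for all $v$, and replacing $v$ by $-v$ yields $\left<I'(u),v\right>=\left<\xi,v\right>$ for every $v\in X$, whence $\xi=I'(u)$.

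The heart of the argument is therefore the identity $I^{\circ}(u;v)=\left<I'(u),v\right>$. Fix $v\in X$ and choose an open neighborhood of $u$ on which $I$ is continuously Fr\'echet differentiable. For $h$ small and $\delta>0$ small, the segment joining $u+h$ to $u+h+\delta v$ lies in this neighborhood, so the real-valued function $t\mapsto I(u+h+t\delta v)$ is differentiable on $[0,1]$. Applying the classical one-dimensional mean value theorem to this scalar function, there exists $\tau=\tau(h,\delta)\in(0,1)$ with
\begin{equation*}
I(u+h+\delta v)-I(u+h)=\delta\left<I'(u+h+\tau\delta v),v\right>,
\end{equation*}
so that the difference quotient defining $I^{\circ}(u;v)$ equals $\left<I'(u+h+\tau\delta v),v\right>$.

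It remains to pass to the limit. As $h\to 0$ and $\delta\downarrow 0$ the intermediate point $u+h+\tau\delta v$ converges to $u$, uniformly in $\tau\in(0,1)$ since $||\tau\delta v||\leq\delta\,||v||$. Continuity of the Fr\'echet derivative then gives $||I'(u+h+\tau\delta v)-I'(u)||_{*}\to 0$, and hence
\begin{equation*}
\left|\left<I'(u+h+\tau\delta v),v\right>-\left<I'(u),v\right>\right|\leq ||I'(u+h+\tau\delta v)-I'(u)||_{*}\,||v|| \to 0.
\end{equation*}
Thus the difference quotient actually converges to $\left<I'(u),v\right>$, which forces $I^{\circ}(u;v)=\left<I'(u),v\right>$ and completes the proof. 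The only point requiring care is the mean value step: the usual Banach-space subtlety, that $f(b)-f(a)=f'(c)(b-a)$ can fail for vector-valued maps, does not arise here because $I$ is real-valued, so the scalar mean value theorem applies along the segment; everything else is continuity of $I'$.
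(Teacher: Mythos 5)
Your proof is correct: the mean value step is legitimate because $I$ is real-valued, the intermediate points converge to $u$ uniformly in $\tau$, and continuity of $I'$ then upgrades the $\limsup$ in the definition of $I^{\circ}(u;v)$ to a genuine limit equal to $\left<I'(u),v\right>$, after which the two inclusions follow exactly as you describe. The paper states this lemma without proof, citing the nonsmooth-analysis literature (Chang, Clarke); your argument is the standard one found there, so there is nothing to reconcile.
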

	
	\begin{lemma}\label{35}
		If $I \in C^{1}(X, \mathbb{R})$ and $J \in Lip_{loc}(X, \mathbb{R})$, then for each $u \in X$
		$$\partial(I+J)(u)= \partial I(u) + \partial J(u).$$
	\end{lemma}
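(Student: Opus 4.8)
The plan is to reduce the set identity to an identity between generalized directional derivatives, and then to read off both inclusions directly from the defining inequality of the generalized gradient. The decisive observation is that when $I \in C^{1}(X,\mathbb{R})$ the generalized directional derivative $I^{\circ}(u;v)$ is not merely dominated by the ordinary directional derivative but coincides with it and is attained as a genuine limit. To make this precise I would use the fundamental theorem of calculus along the segment joining $u+h$ and $u+h+\delta v$,
$$
\frac{I(u+h+\delta v)-I(u+h)}{\delta} = \int_{0}^{1} \langle I'(u+h+t\delta v),v\rangle\, dt,
$$
and observe that as $(h,\delta)\to(0,0^{+})$ the integrand converges uniformly in $t\in[0,1]$ to $\langle I'(u),v\rangle$ by continuity of $I'$. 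Hence the limit exists and equals $\langle I'(u),v\rangle$; in particular $I^{\circ}(u;v)=\langle I'(u),v\rangle$ for every $v\in X$.

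Because this quantity is a true limit and not only a $\limsup$, I can split the $\limsup$ defining $(I+J)^{\circ}(u;v)$. Writing the difference quotient of $I+J$ as the sum of the difference quotients of $I$ and $J$ and invoking the elementary fact that $\limsup(a_{n}+b_{n})=\lim a_{n}+\limsup b_{n}$ whenever $\lim a_{n}$ exists, I obtain the additivity
$$
(I+J)^{\circ}(u;v)=\langle I'(u),v\rangle + J^{\circ}(u;v), \qquad \forall\, v\in X. \qquad (\star)
$$

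With $(\star)$ established, the set identity follows from the definition $\partial\Phi(u)=\{\xi\in X^{*}: \Phi^{\circ}(u;v)\ge\langle\xi,v\rangle\ \forall v\}$ together with the preceding lemma, which gives $\partial I(u)=\{I'(u)\}$. For the inclusion $\partial I(u)+\partial J(u)\subseteq\partial(I+J)(u)$, any element of the left-hand side has the form $I'(u)+\mu$ with $\mu\in\partial J(u)$, and then $\langle I'(u)+\mu, v\rangle \le \langle I'(u),v\rangle + J^{\circ}(u;v) = (I+J)^{\circ}(u;v)$ for all $v$, so $I'(u)+\mu\in\partial(I+J)(u)$. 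For the reverse inclusion, if $\xi\in\partial(I+J)(u)$ then $\langle \xi,v\rangle\le(I+J)^{\circ}(u;v)=\langle I'(u),v\rangle+J^{\circ}(u;v)$, whence $\langle \xi-I'(u),v\rangle\le J^{\circ}(u;v)$ for all $v$, which is exactly the statement that $\xi-I'(u)\in\partial J(u)$; therefore $\xi=I'(u)+(\xi-I'(u))\in\partial I(u)+\partial J(u)$.

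The only delicate step is the one carried out in the first paragraph: one must verify that the generalized directional derivative of a $C^{1}$ functional is a genuine limit rather than a mere limit superior, since it is precisely this property that allows the $\limsup$ to split additively in $(\star)$. This is where continuity of $I'$ (the strict differentiability automatically furnished by $C^{1}$ regularity) is essential. I expect this to be the main obstacle, because the general subadditivity $(I+J)^{\circ}(u;v)\le I^{\circ}(u;v)+J^{\circ}(u;v)$, valid for arbitrary locally Lipschitz functionals, yields only one of the two inclusions, and the reverse inclusion genuinely requires the equality in $(\star)$.
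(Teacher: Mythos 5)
Your proof is correct, and it is essentially the standard argument: the paper states this lemma without proof, recalling it from Chang and Clarke, where the sum rule with equality is established exactly as you do, via strict differentiability of the $C^{1}$ summand (so that $I^{\circ}(u;v)=\langle I'(u),v\rangle$ is a genuine limit, letting the $\limsup$ split additively) followed by the two elementary inclusions. Your identification of the attained limit in $I^{\circ}(u;v)$ as the crux is exactly right; nothing is missing.
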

	
	\begin{lemma}\label{19}
	Assume $\phi \in C^{1}([0,1], X)$ and $ I \in  Lip_{loc}(X, \mathbb{R})$. Then, the function $\Upsilon:= I \circ \phi:[0,1]\rightarrow \mathbb{R}$ is differentiable  a.e in $[0,1]$ and
		$$
		\Upsilon'(t) \leq \max\{\left<\xi, \phi'(t)\right>; \xi \in \partial I (\phi(t))\}	\quad \mbox{a.e. in} \quad  [0,1].
		$$
	
	\end{lemma}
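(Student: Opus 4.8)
The plan is to first establish that $\Upsilon = I \circ \phi$ is Lipschitz on $[0,1]$, deduce differentiability almost everywhere from the one-dimensional Lebesgue differentiation theorem, then bound the derivative at each point by the generalized directional derivative $I^{\circ}(\phi(t);\phi'(t))$, and finally rewrite the latter as the maximum over $\partial I(\phi(t))$.

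First I would exploit compactness. Since $\phi \in C^{1}([0,1],X)$, the image $\phi([0,1])$ is a compact subset of $X$, and $\phi'$ is bounded on $[0,1]$, say $\|\phi'(s)\| \leq L$, so that $\phi$ is $L$-Lipschitz. Covering the compact set $\phi([0,1])$ by finitely many of the neighborhoods on which $I$ is Lipschitz (from the definition of $Lip_{loc}$), one obtains an open set $U \supset \phi([0,1])$ and a single constant $K>0$ with $|I(v_{2})-I(v_{1})| \leq K\|v_{2}-v_{1}\|$ for all $v_{1},v_{2} \in U$. Hence for $s,t \in [0,1]$ one has $|\Upsilon(s)-\Upsilon(t)| \leq K\|\phi(s)-\phi(t)\| \leq KL|s-t|$, so $\Upsilon$ is Lipschitz on $[0,1]$ and therefore differentiable a.e.

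Next I would fix a point $t \in [0,1]$ at which $\Upsilon'(t)$ exists. Writing the $C^{1}$ expansion $\phi(t+\delta)=\phi(t)+\delta\phi'(t)+r(\delta)$ with $\|r(\delta)\|/\delta \to 0$ as $\delta \to 0$, and using the Lipschitz bound on $U$ to replace the argument $\phi(t)+\delta\phi'(t)+r(\delta)$ by $\phi(t)+\delta\phi'(t)$ at the cost of $K\|r(\delta)\|$, I obtain
\begin{equation*}
\frac{\Upsilon(t+\delta)-\Upsilon(t)}{\delta} \leq \frac{I(\phi(t)+\delta\phi'(t))-I(\phi(t))}{\delta} + K\,\frac{\|r(\delta)\|}{\delta}.
\end{equation*}
Letting $\delta \downarrow 0$, the error term vanishes, the left-hand side converges to $\Upsilon'(t)$, and the $\limsup$ of the first term on the right is dominated by $I^{\circ}(\phi(t);\phi'(t))$, since the defining $\limsup$ of $I^{\circ}$ ranges over the larger family of perturbations $h \to 0$, $\delta \downarrow 0$ and so can only be larger than the value at $h=0$. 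This gives $\Upsilon'(t) \leq I^{\circ}(\phi(t);\phi'(t))$.

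Finally I would invoke the standard duality fact from Clarke's theory that the generalized directional derivative is the support function of the generalized gradient, that is $I^{\circ}(u;v)=\max\{\langle \xi,v\rangle : \xi \in \partial I(u)\}$, which follows from $\partial I(u)$ being a nonempty, convex, weak$^{*}$-compact subset of $X^{*}$ together with the sublinearity and continuity of $v \mapsto I^{\circ}(u;v)$ and the Hahn--Banach theorem. Applying this with $u=\phi(t)$ and $v=\phi'(t)$ yields exactly the claimed inequality. I expect the only delicate point to be the replacement step carrying the $o(\delta)$ remainder through the quotient and the passage to the $\limsup$; the rest is bookkeeping with the uniform Lipschitz constant furnished by compactness.
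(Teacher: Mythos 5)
Your argument is correct and is essentially the standard proof of this fact; the paper itself does not prove Lemma \ref{19} but simply recalls it from the nonsmooth-analysis literature (Chang, Clarke, Grossinho--Tersian), where the proof runs exactly along your lines: Lipschitz continuity of $\Upsilon$, a.e.\ differentiability, the estimate $\Upsilon'(t)\leq I^{\circ}(\phi(t);\phi'(t))$ via the $C^{1}$ expansion of $\phi$, and the identification of $I^{\circ}(u;\cdot)$ as the support function of $\partial I(u)$. One small imprecision: covering the compact set $\phi([0,1])$ by finitely many neighborhoods on each of which $I$ is Lipschitz does not by itself yield a single constant $K$ valid for \emph{all} pairs $v_{1},v_{2}$ in the union $U$ (two points may lie in different patches with no control in between); what you actually need, and what does follow by a Lebesgue-number and chaining argument along the interval $[0,1]$, is that $\Upsilon$ itself is Lipschitz on $[0,1]$ and that $I$ is Lipschitz with a uniform constant on some $\varepsilon$-neighborhood of each point $\phi(t)$, which is all the replacement step with the remainder $r(\delta)$ requires.
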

	
	\section{Preliminaries about Orlicz spaces}
	
	In this section, we recall some properties involving Orlicz and Orlicz-Sobolev spaces that can be found in \cite{Adams1, Fukagai 1, RAO}. To begin with, let us say that a continuous
	function $H: \mathbb{R}\rightarrow [0, +\infty)$ is a $N$-function if:
	\begin{itemize}
		\item [(i)] $H$ is convex.
		\item [(ii)] $H(t)=0\Leftrightarrow t=0$.
		\item [(iii)] $\displaystyle \lim_{t\rightarrow 0} \frac{H(t)}{t}=0$ and  $\displaystyle\lim_{t\rightarrow \infty} \frac{H(t)}{t}=+\infty$.
		\item [(iv)] $H$ is even.
	\end{itemize}
	From now on, a $N$-function $H$ verifies the $\Delta_{2}$-condition, denoted by $H \in \Delta_{2}$, if
	\begin{equation*}
	H(2t) \leq kH(t)\;\;\forall\; t\geq 0,
	\end{equation*}
	for some constant $k>0$. 
	
	The complementary function ( or conjugate function ) $\tilde{H}$ associated with $H$ is given by the Legendre's transformation, that is,
	$$\tilde{H}= \max_{t \geq 0} \{st-H(t)\}\;\;\mbox{for}\;\; s \geq 0.$$
	The functions $H$ and $\tilde{H}$ are complementary to each other, and $\tilde{H}$ is also a N-function.
	
	In what follows, fixed an open set $\Omega \subset \mathbb{R}^{N}$ and a $N$-function $H$, we define the Orlicz space associated with $H$ as
	$$L^{H}(\Omega) = \left\{u \in L^{1}_{loc}(\Omega) \; : \; \int_{\Omega} H\left(\dfrac{|u|}{\lambda}\right) < +\infty, \quad \text{for some} \, \, \lambda >0 \right\}.$$
	The space $L^{H}(\Omega)$ is a Banach space endowed with Luxemburg norm given by
	$$
	||u||_{H} = \inf \left\{\lambda > 0 \; : \; \int_{\Omega}H\left(\dfrac{|u|}{\lambda}\right) \le 1 \right\}.
	$$
	We would like point out that in Orlicz spaces we also have a H\"older type inequality, which is given by
	$$
	\left\lvert\int_{\Omega} u v \right\lvert \le 2||u||_{H}||v||_{\tilde{H}},\;\;\forall\; u  \in L^{H}(\Omega) \quad \mbox{and} \quad u  \in L^{\tilde{H}}(\Omega)
	$$
	The space $L^{H}(\Omega)$ is separable and reflexive when $H$ and $\tilde{H}$ satisfy the $\Delta_{2}$-condition. Moreover, $\Delta_{2}$-condition implies 
	$$
	u_{n}\rightarrow u \;\;\mbox{in}\;\; L^{H}(\Omega)\Leftrightarrow \int_{\Omega} H(|u_{n}-u|)\rightarrow 0.
	$$
	The Orlicz-Sobolev space is defined as
	\begin{eqnarray}
	W^{1, H}(\Omega)=\{u &\in& L^{H}(\Omega); \exists
	\; v_{j} \in L^{H}(\Omega)\;\mbox{with}\; \int_{\Omega} u \frac{\partial\psi}{\partial x_{j}}=-\int_{\Omega} v_{j}\psi \nonumber \\ 
	&\mbox{for}& j=1,...,N\;\mbox{and}\; \forall\; \psi \in C_{0}^{\infty}(\Omega)\} \nonumber
	\end{eqnarray}
	endowed with the norm
	$$||u||_{1, H}= ||u||_{H}+ ||\nabla u||_{H}.$$
	It is well known that $(W^{1, H}(\Omega),||u||_{1, H})$ is a Banach space.
	
	Another important inequality was proved by Donaldson and Trudinger \cite{T.K}, which establishes the existence of a constant $S_{N} = S(N) > 0$ such that
	$$||u||_{H_{*}}\leq 
	S_{N} ||u||_{1, H},\; u \in W^{1, H}(\mathbb{R}^{N}),$$
	where $H_{*}$ is the Sobolev conjugate function of $H$, defined by
	$$H_{*}^{-1}(t)= \int_{0}^{t} \frac{H^{-1}(s)}{s^{(N+1)/N}}ds,\;\;\mbox{for}\;\; t>0.$$
	The last inequality shows the embedding $W^{1, H}(\Omega)\hookrightarrow L^{H_{*}}(\Omega)$  is continuous. Furthermore, if the limits below occur  
	\begin{equation*}
	\limsup_{|t|\rightarrow 0} \frac{H(t)}{|t|^{2}}=c_{1}\;\;\mbox{and}\;\; \limsup_{|t|\rightarrow +\infty}\frac{H(t)}{{2^{*}}^{2^{*}}|t|^{2^{*}}} =0,
	\end{equation*}
the embedding $H^{1}(\mathbb{R}^{N})\hookrightarrow L^{H}(\mathbb{R}^{N})$ is also continuous.

An important N-function in our study is $\Phi(t)= |t|^{2}+|t|^{p}$ that satisfies 
\begin{equation} \label{emphi*}
2 \leq \frac{\Phi'(t)t}{\Phi(t)} \leq p,\;\;\mbox{for}\;\; t>0,
\end{equation}
	and
\begin{equation} \label{emphi}
	\limsup_{|t|\rightarrow 0} \frac{\Phi(t)}{|t|^{2}}=1\;\;\mbox{and}\;\; \limsup_{|t|\rightarrow +\infty}\frac{\Phi(t)}{{2^{*}}^{2^{*}}|t|^{2^{*}}} =0.
\end{equation}
These limits guarantee that the embedding $H^{1}(\mathbb{R}^{N})\hookrightarrow L^{\Phi}(\mathbb{R}^{N})$ is continuous. The inequality (\ref{emphi*}) is crucial in our approach, because it ensures that $\Phi$ and $\tilde{\Phi}$ satisfy $\Delta_{2}$-condition, then  $L^{\Phi}(\mathbb{R}^{N})$ and $L^{\tilde{\Phi}}(\mathbb{R}^{N})$ are reflexive spaces. 

	\section{Some properties of the functional $\Psi$}
	
	Let $F: \mathbb{R}^{N} \times \mathbb{R} \rightarrow \mathbb{R}$ be a mensurable function for each $t \in \mathbb{R}$ and Locally
	Lipschitzian for each $x \in \mathbb{R}^{N}$. By definition
	$$\partial_{t} F(x,t)= \{\mu \in \mathbb{R}\;:\; F^{\circ}(x,t;r) \geq \mu r,\; r \in \mathbb{R}\},$$
	where $F^{\circ}(x,t;r)$ denotes the generalized directional derivative of $t\mapsto F(x,t)$ in the direction
	of $r$, that is,
	\begin{equation*}
	F^{\circ}(x,t;r)= \limsup_{h\rightarrow t, \lambda \downarrow 0} \frac{F(x, h+\lambda r)-F(x,h)}{\lambda}.
	\end{equation*}
	Assumptions $(f_{1})-(f_{3})$ imply that, for any $\varepsilon>0$, there is $C_{\varepsilon}>0$ such that
	\begin{equation*}
	|f(x,t)|\leq \varepsilon |t|+C_{\varepsilon}|t|^{p-1},\;\forall\;t \in \mathbb{R},\;\forall\;x\in \mathbb{R}^{N},
	\end{equation*}
 consequently,
		\begin{equation*}
	|F(x,t)|\leq \frac{\varepsilon}{2} |t|^{2}+\frac{C_{\varepsilon}}{p}|t|^{p},\;\forall\;t \in \mathbb{R},\;\forall\;x\in \mathbb{R}^{N}.
	\end{equation*}
From this, it is easy to check that functional $\Psi: H^{1}(\mathbb{R}^{N})\rightarrow \mathbb{R}$ given by
\begin{equation} \label{Psi}
	\Psi(u)=\int_{\mathbb{R}^{N}}F(x,u),
\end{equation}
	is well defined. However, in order to apply variational methods for locally Lipschitz functionals, it is better to consider the functional $\Psi$ in a more appropriated domain, that is, \linebreak $\Psi: L^{\Phi}(\mathbb{R}^{N})\rightarrow \mathbb{R}$, for $\Phi(t)=|t|^{2}+|t|^{p}$, where $ L^{\Phi}(\mathbb{R}^{N})$ denotes the Orlicz space associated with the $N$-function $\Phi$. 
	
	Since $\Phi$ satisfies $\Delta_{2}$-condition, we can guarantee that given $J \in (L^{\Phi}(\mathbb{R}^{N}))^{*}$, then
	$$J(u)= \int_{\mathbb{R}^{N}} vu\;,\;\;\forall\; u \in L^{\Phi}(\mathbb{R}^{N}),$$
	for some $v \in L^{\tilde{\Phi}}(\mathbb{R}^{N})$, where $\tilde{\Phi}$ is the conjugate function of $\Phi$.
	
The above information involving the functional $\Psi$ are crucial in our approach, because they are used to prove that the inclusion below holds
	$$\partial \Psi (u)\subset \partial_{t} F(x,u)= [\underline{f}(x, u(x)), \overline{f}(x, u(x))]\;\;\mbox{a.e in}\;\; \mathbb{R}^{N},$$
where
	$$\underline{f}(x,t)= \lim_{r\downarrow 0} ess \inf\{f(x,s); |s-t|<r\}$$
and
	$$\overline{f}(x,t)= \lim_{r\downarrow 0} ess \sup\{f(x,s); |s-t|<r\}.$$

We would like point out that for elliptic problem involving discontinuous nonlinearity in bounded domains, the Orlicz space $L^{\Phi}(\mathbb{R}^{N})$ is not necessary, because in that case the growth of the function $f$ implies that the functional $\Psi$ is well defined in the Lebesgue space $L^{p}(\Omega)$, for more details see Chang \cite[Section 2]{Chang1}. However, in the present paper we are working in $\mathbb{R}^N$ and the conditions on $f$ yield 
$$
|F(x,t)| \leq C(|t|^{2}+|t|^{p}), \quad t \in \mathbb{R}, 
$$
then $\Psi$ is not well defined in the space $L^{p}(\mathbb{R}^N)$. The above estimate involving the function $F$ suggests that the best space to work is the Orlicz space $ L^{\Phi}(\mathbb{R}^{N})$.

Before continuing our study, we would like to mention that by  $(f_1)-(f_{3})$ the condition below occurs:
	\begin{itemize}
		\item [($F_{*}$)]There exist $C_0, C_1>0$ such that 
		$$|\xi| \leq C_0(|t| +|t|^{p-1}) \leq C_1\Phi'(|t|), \;\;\forall\; \xi \in \partial_{t} F(x,t)\;,\forall\; t \in \mathbb{R},\;\forall\; x \in \mathbb{R}^{N}.$$
	\end{itemize}

The next three results establish important properties of the functional $\Psi$ given in (\ref{Psi}).   

	\begin{lemma}[See \mbox{\cite[Lemma 3.1]{Alves1}}]
		Assume ($F_{*}$). Then, the functional $\Psi: L^{\Phi}(\mathbb{R}^{N})\rightarrow \mathbb{R}$ given by
		$$\Psi(u)= \int_{\mathbb{R}^{N}} F(x,u),\; u \in L^{\Phi}(\mathbb{R}^{N}),$$
		is well defined and $\Psi \in Lip_{loc}(L^{\Phi}(\mathbb{R}^{N}), \mathbb{R})$.
	\end{lemma}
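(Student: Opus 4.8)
The plan is to verify the two assertions---well-definedness and the local Lipschitz property---separately, relying on three ingredients already available: the growth estimate $|F(x,t)|\le C\Phi(|t|)$ recorded above (with $\Phi(t)=|t|^{2}+|t|^{p}$), the H\"older inequality in Orlicz spaces, and the fact that $\Phi\in\Delta_{2}$, so that membership $u\in L^{\Phi}(\mathbb{R}^{N})$ is equivalent to the modular condition $\int_{\mathbb{R}^{N}}\Phi(|u|)<+\infty$. For well-definedness one then simply estimates $|\Psi(u)|\le\int_{\mathbb{R}^{N}}|F(x,u)|\le C\int_{\mathbb{R}^{N}}\Phi(|u|)<+\infty$ for every $u\in L^{\Phi}(\mathbb{R}^{N})$; here $x\mapsto F(x,u(x))$ is measurable because $F$ is measurable in $x$ and continuous (indeed locally Lipschitz) in $t$, even though $f$ itself may be discontinuous in $t$, so this step is routine.

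For the local Lipschitz property I would fix $u\in L^{\Phi}(\mathbb{R}^{N})$ and $R>0$, and take $v_{1},v_{2}\in B_{R}(u)$. Applying Lebourg's mean value theorem of nonsmooth analysis to the locally Lipschitz map $t\mapsto F(x,t)$ at a.e.\ $x$, one obtains a point $\tau(x)$ lying between $v_{1}(x)$ and $v_{2}(x)$ and some $\xi(x)\in\partial_{t}F(x,\tau(x))$ with $F(x,v_{2}(x))-F(x,v_{1}(x))=\xi(x)\,(v_{2}(x)-v_{1}(x))$. Since $\Phi'$ is nondecreasing on $[0,+\infty)$ and $|\tau(x)|\le|v_{1}(x)|+|v_{2}(x)|$, condition $(F_{*})$ gives the pointwise bound
\begin{equation*}
|F(x,v_{2})-F(x,v_{1})|\le C_{1}\,\Phi'(|v_{1}|+|v_{2}|)\,|v_{2}-v_{1}|\quad\text{a.e. in }\mathbb{R}^{N}.
\end{equation*}
Integrating and invoking the H\"older inequality in Orlicz spaces yields
\begin{equation*}
|\Psi(v_{2})-\Psi(v_{1})|\le 2C_{1}\,\big\|\Phi'(|v_{1}|+|v_{2}|)\big\|_{\tilde{\Phi}}\,\|v_{2}-v_{1}\|_{\Phi},
\end{equation*}
so it only remains to bound $\big\|\Phi'(|v_{1}|+|v_{2}|)\big\|_{\tilde{\Phi}}$ uniformly for $v_{1},v_{2}\in B_{R}(u)$.

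This last point is the crux. Here I would exploit the Young-type identity $\tilde{\Phi}(\Phi'(s))=s\Phi'(s)-\Phi(s)$ together with the inequality $\Phi'(s)s\le p\,\Phi(s)$ coming from \eqref{emphi*}, which gives $\tilde{\Phi}(\Phi'(s))\le(p-1)\Phi(s)$ for all $s\ge 0$. Writing $w:=|v_{1}|+|v_{2}|$, one thus has $\int_{\mathbb{R}^{N}}\tilde{\Phi}(\Phi'(w))\le(p-1)\int_{\mathbb{R}^{N}}\Phi(w)$, while the triangle inequality $\|w\|_{\Phi}\le\|v_{1}\|_{\Phi}+\|v_{2}\|_{\Phi}\le 2(\|u\|_{\Phi}+R)$ together with the $\Delta_{2}$-condition controls $\int_{\mathbb{R}^{N}}\Phi(w)$ by a constant depending only on $u$ and $R$. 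Finally, the norm--modular relation in the $\Delta_{2}$ Orlicz space $L^{\tilde{\Phi}}(\mathbb{R}^{N})$ converts this modular bound into $\big\|\Phi'(w)\big\|_{\tilde{\Phi}}\le K_{0}=K_{0}(u,R)$, whence $|\Psi(v_{2})-\Psi(v_{1})|\le 2C_{1}K_{0}\,\|v_{2}-v_{1}\|_{\Phi}$, that is, $\Psi\in Lip_{loc}(L^{\Phi}(\mathbb{R}^{N}),\mathbb{R})$. I expect the main obstacle to be precisely this uniform control of $\big\|\Phi'(|v_{1}|+|v_{2}|)\big\|_{\tilde{\Phi}}$ over the neighborhood, since it forces one to combine the Young-type estimate with the two-sided passage between the Luxemburg norm and the modular, a passage that is available only because both $\Phi$ and $\tilde{\Phi}$ satisfy the $\Delta_{2}$-condition.
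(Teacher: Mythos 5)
Your argument is correct. The paper does not actually prove this lemma---it is quoted from \cite[Lemma 3.1]{Alves1}---but your proof is essentially the standard one and matches the technique the paper itself uses in Lemma \ref{C1}: Lebourg's mean value theorem combined with $(F_{*})$ and the monotonicity of $\Phi'$, then the Orlicz--H\"older inequality, with the Young identity $\tilde{\Phi}(\Phi'(s))=s\Phi'(s)-\Phi(s)$ and the bound $s\Phi'(s)\leq p\,\Phi(s)$ from \eqref{emphi*} converting the modular estimate into the required uniform bound on $\|\Phi'(|v_{1}|+|v_{2}|)\|_{\tilde{\Phi}}$. (The only quibble: the final passage from the modular bound to the Luxemburg-norm bound for $\tilde{\Phi}$ needs only convexity of $\tilde{\Phi}$, not its $\Delta_{2}$-condition; $\Delta_{2}$ is genuinely used only for the norm-to-modular direction for $\Phi$.)
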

	
	\begin{theorem}[See \mbox{\cite[ Theorem 4.1]{Alves}}]\label{72}
		Assume ($F_{*}$) and that $\underline{f}(x,t)$ and $\overline{f}(x,t)$ are $N$-mensurable
		functions. Then for each $u \in L^{\Phi}(\mathbb{R}^{N})$,
		\begin{equation}\label{67}
		\partial \Psi (u) \subset \partial_{t} F(x, u) = [\underline{f}(x,u(x)), \overline{f}(x,u(x))]\;\;\mbox{a.e in}\;\; \mathbb{R}^{N}.
		\end{equation}
	\end{theorem}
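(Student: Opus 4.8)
The plan is to identify each element of $\partial\Psi(u)$ with a function in $L^{\tilde{\Phi}}(\mathbb{R}^{N})$ and then to trap its pointwise values between $\underline{f}$ and $\overline{f}$. Fix $\xi\in\partial\Psi(u)$. Since $\Phi$ and $\tilde{\Phi}$ satisfy the $\Delta_{2}$-condition, $L^{\Phi}(\mathbb{R}^{N})$ is reflexive and $(L^{\Phi}(\mathbb{R}^{N}))^{*}=L^{\tilde{\Phi}}(\mathbb{R}^{N})$, as recalled just before the statement; hence there is $v\in L^{\tilde{\Phi}}(\mathbb{R}^{N})$ with $\langle\xi,w\rangle=\int_{\mathbb{R}^{N}}vw$ for every $w\in L^{\Phi}(\mathbb{R}^{N})$. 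By the definition of the generalized gradient,
$$\int_{\mathbb{R}^{N}}vw\le\Psi^{\circ}(u;w)\qquad\text{for all }w\in L^{\Phi}(\mathbb{R}^{N}),$$
so the whole problem reduces to showing that $v(x)\in[\underline{f}(x,u(x)),\overline{f}(x,u(x))]$ a.e. in $\mathbb{R}^{N}$.

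The core of the argument is the Aubin--Clarke type estimate
$$\Psi^{\circ}(u;w)\le\int_{\mathbb{R}^{N}}F^{\circ}(x,u(x);w(x))\qquad\text{for all }w\in L^{\Phi}(\mathbb{R}^{N}).$$
To prove it I would write $\Psi^{\circ}(u;w)$ as the $\limsup$ over $h\to 0$ in $L^{\Phi}(\mathbb{R}^{N})$ and $\delta\downarrow 0$ of $\int_{\mathbb{R}^{N}}\delta^{-1}\bigl(F(x,u+h+\delta w)-F(x,u+h)\bigr)$, select a sequence $(h_{n},\delta_{n})$ realizing the $\limsup$, and pass to a subsequence with $h_{n}\to 0$ a.e. By Lebourg's mean value theorem the integrand is bounded above by $\max\{\mu\,w(x):\mu\in\partial_{t}F(x,\eta_{n}(x))\}$ for some $\eta_{n}(x)$ on the segment joining $u+h_{n}$ and $u+h_{n}+\delta_{n}w$, and $(F_{*})$ gives the bound $C_{1}\Phi'(|u+h_{n}|+|w|)\,|w|$ once $\delta_{n}\le 1$. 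The functions $\Phi'(|u+h_{n}|+|w|)$ lie in $L^{\tilde{\Phi}}(\mathbb{R}^{N})$ and, by continuity of the associated Nemytskii operator (valid because $\Phi,\tilde{\Phi}\in\Delta_{2}$), converge there to $\Phi'(|u|+|w|)$; together with the Orlicz--H\"older inequality this furnishes integrable dominators $G_{n}\to G$ in $L^{1}(\mathbb{R}^{N})$, so a variable-dominator (Pratt) reverse Fatou lemma lets me pass the $\limsup$ inside the integral, and upper semicontinuity of $F^{\circ}(x,\cdot\,;\cdot)$ produces exactly $F^{\circ}(x,u(x);w(x))$. The measurability of $\underline{f}$ and $\overline{f}$ is what guarantees that $x\mapsto F^{\circ}(x,u(x);w(x))$ is measurable, so that every integral written above is meaningful. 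I expect this passage to the limit --- building an integrable dominator uniform in $(h_{n},\delta_{n})$ and justifying the interchange of $\limsup$ and integral in the Orlicz setting --- to be the main obstacle.

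Once the estimate is in hand, combining it with the representation of $\xi$ gives $\int_{\mathbb{R}^{N}}vw\le\int_{\mathbb{R}^{N}}F^{\circ}(x,u(x);w(x))$ for all $w\in L^{\Phi}(\mathbb{R}^{N})$. Since $t\mapsto F(x,t)$ is locally Lipschitz with $\partial_{t}F(x,t)=[\underline{f}(x,t),\overline{f}(x,t)]$, one has $F^{\circ}(x,t;r)=\max\{\mu r:\mu\in\partial_{t}F(x,t)\}$, whence $F^{\circ}(x,u(x);1)=\overline{f}(x,u(x))$ and $F^{\circ}(x,u(x);-1)=-\underline{f}(x,u(x))$. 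Testing the inequality with $w=\chi_{A}$ and $w=-\chi_{A}$ for an arbitrary measurable set $A$ of finite measure (both belong to $L^{\Phi}(\mathbb{R}^{N})$) yields $\int_{A}v\le\int_{A}\overline{f}(x,u(x))$ and $\int_{A}v\ge\int_{A}\underline{f}(x,u(x))$; by $(F_{*})$ all three integrands are locally integrable, so letting $A$ range over finite-measure sets forces $\underline{f}(x,u(x))\le v(x)\le\overline{f}(x,u(x))$ a.e. in $\mathbb{R}^{N}$, which is the asserted inclusion.
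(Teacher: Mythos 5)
The paper does not prove this statement at all: it is imported verbatim from \cite[Theorem 4.1]{Alves}, so there is no internal proof to compare against. Your reconstruction is the standard Aubin--Clarke argument transplanted to $L^{\Phi}(\mathbb{R}^{N})$ --- represent $\xi$ by duality, prove $\Psi^{\circ}(u;w)\le\int_{\mathbb{R}^{N}}F^{\circ}(x,u(x);w(x))$ via Lebourg plus a variable-dominator reverse Fatou, then test with $\pm\chi_{A}$ --- and it is sound; this is precisely the strategy of the cited reference (and of Chang's original $L^{p}(\Omega)$ version). The only point worth flagging is that you invoke $\partial_{t}F(x,t)=[\underline{f}(x,t),\overline{f}(x,t)]$, hence $F^{\circ}(x,t;\pm1)=\overline{f}(x,t),\ -\underline{f}(x,t)$, without argument: that identity is itself the nontrivial content of Chang's lemma and is exactly where the $N$-measurability of $\underline{f},\overline{f}$ is used, so it should be cited or proved rather than treated as definitional.
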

The inclusion above means that given $\xi \in  \partial \Psi (u) \subset (L^{\Phi}(\mathbb{R}^{N}))^{*}\approx L^{\tilde{\Phi}}(\mathbb{R}^{N})$, there is $\tilde{\xi} \in L^{\tilde{\Phi}}(\mathbb{R}^{N})$ such that  
	\begin{itemize}
		\item $\left<\xi, v \right>= \displaystyle \int_{\mathbb{R}^{N}} \tilde{\xi}v ,\;\forall\; v \in L^{\Phi}(\mathbb{R}^{N})$,
		\item $\tilde{\xi}(x) \in \partial_{t} F(x, u(x)) = [\underline{f}(x,u(x)), \overline{f}(x,u(x))]\;\;\mbox{a.e in}\;\; \mathbb{R}^{N}.$
	\end{itemize}

\begin{proposition}[ See  \cite{Clarke1} and \cite{Chang1}]\label{38}	
	Let $\Omega \subset \mathbb{R}^{N}$ be a domain, $(u_{n}) \subset L^{\Phi}(\Omega)$ and $(\rho_{n}) \subset L^{\tilde{\Phi}}(\Omega)$ with $\rho_{n} \in \partial \Psi(u_{n})$. If $u_{n} \rightarrow u_{0}$ in $L^{\Phi}(\Omega)$ and $\rho_{n} \stackrel{\ast}{\rightharpoonup} \rho$ in $L^{\tilde{\Phi}}(\Omega)$, then $\rho_{0} \in \partial \Psi(u_{0})$.
\end{proposition}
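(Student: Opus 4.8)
The plan is to prove this as an instance of the closed-graph (upper semicontinuity) property of Clarke's generalized gradient. Recall that, by the very definition of $\partial\Psi$, the hypothesis $\rho_{n}\in\partial\Psi(u_{n})$ is equivalent to
\begin{equation*}
\langle \rho_{n},v\rangle \leq \Psi^{\circ}(u_{n};v),\qquad \forall\, v\in L^{\Phi}(\Omega),
\end{equation*}
and the desired conclusion $\rho\in\partial\Psi(u_{0})$ (the limit object, denoted $\rho_{0}$ in the statement) is equivalent to $\langle \rho,v\rangle\leq\Psi^{\circ}(u_{0};v)$ for every such $v$. Hence it suffices to fix $v\in L^{\Phi}(\Omega)$ and pass to the limit in the displayed inequality, controlling the left-hand side by the weak-$\ast$ convergence and the right-hand side by an upper semicontinuity estimate for the generalized directional derivative.

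First I would dispose of the left-hand side. Since $\Phi$ and $\tilde\Phi$ satisfy the $\Delta_{2}$-condition, the spaces $L^{\Phi}(\Omega)$ and $L^{\tilde\Phi}(\Omega)$ are reflexive and $(L^{\Phi}(\Omega))^{*}\approx L^{\tilde\Phi}(\Omega)$, so the weak-$\ast$ convergence $\rho_{n}\stackrel{\ast}{\rightharpoonup}\rho$ means precisely $\langle \rho_{n},v\rangle=\int_{\Omega}\rho_{n}v\to\int_{\Omega}\rho v=\langle \rho,v\rangle$ for each fixed $v\in L^{\Phi}(\Omega)$. In particular $\lim_{n}\langle \rho_{n},v\rangle=\langle \rho,v\rangle$, so taking $\limsup$ in the inequality above yields $\langle \rho,v\rangle\leq\limsup_{n}\Psi^{\circ}(u_{n};v)$, and everything reduces to showing
\begin{equation*}
\limsup_{n\to+\infty}\Psi^{\circ}(u_{n};v)\leq\Psi^{\circ}(u_{0};v).
\end{equation*}

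The core of the argument is this last estimate, which I would prove directly from the definition of $\Psi^{\circ}$. Because $\Psi\in Lip_{loc}(L^{\Phi}(\Omega),\mathbb{R})$ (the local Lipschitz property granted under $(F_{*})$) and $u_{n}\to u_{0}$, all but finitely many $u_{n}$ lie in a fixed neighborhood of $u_{0}$ on which $\Psi$ is Lipschitz with a constant $K$; in particular each $\Psi^{\circ}(u_{n};v)$ is finite and bounded by $K\|v\|_{\Phi}$. For each $n$, using the definition of the $\limsup$ defining $\Psi^{\circ}(u_{n};v)$, I can choose $w_{n}\in L^{\Phi}(\Omega)$ with $\|w_{n}-u_{n}\|_{\Phi}\leq 1/n$ and $\lambda_{n}\in(0,1/n]$ such that
\begin{equation*}
\frac{\Psi(w_{n}+\lambda_{n}v)-\Psi(w_{n})}{\lambda_{n}}\geq\Psi^{\circ}(u_{n};v)-\frac1n.
\end{equation*}
Then $w_{n}\to u_{0}$ and $\lambda_{n}\downarrow0$, so the pairs $(w_{n},\lambda_{n})$ are admissible in the $\limsup$ defining $\Psi^{\circ}(u_{0};v)$, and therefore
\begin{equation*}
\limsup_{n}\Psi^{\circ}(u_{n};v)\leq\limsup_{n}\frac{\Psi(w_{n}+\lambda_{n}v)-\Psi(w_{n})}{\lambda_{n}}\leq\Psi^{\circ}(u_{0};v).
\end{equation*}
Combining this with the previous paragraph gives $\langle \rho,v\rangle\leq\Psi^{\circ}(u_{0};v)$, and since $v\in L^{\Phi}(\Omega)$ was arbitrary we conclude $\rho\in\partial\Psi(u_{0})$.

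I expect the only delicate point to be this upper semicontinuity step: one must select the perturbations $w_{n}$ so that they are simultaneously close enough to $u_{n}$ to approximate $\Psi^{\circ}(u_{n};v)$ and convergent to $u_{0}$ so as to be admissible for $\Psi^{\circ}(u_{0};v)$. The local Lipschitz continuity of $\Psi$ is exactly what keeps all difference quotients finite and makes this diagonal selection legitimate. The remaining ingredients—reflexivity of $L^{\Phi}(\Omega)$ and $L^{\tilde\Phi}(\Omega)$ and the identification of the weak-$\ast$ limit with an $L^{\tilde\Phi}$-function—are routine consequences of the $\Delta_{2}$-conditions recorded in Section 3, and require no further work.
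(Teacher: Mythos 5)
Your argument is correct: it is the standard closed-graph/upper-semicontinuity proof for Clarke's generalized gradient (pass to the limit in $\langle\rho_n,v\rangle\le\Psi^{\circ}(u_n;v)$ using weak-$\ast$ convergence on the left and a diagonal selection of admissible pairs $(w_n,\lambda_n)$ to get $\limsup_n\Psi^{\circ}(u_n;v)\le\Psi^{\circ}(u_0;v)$ on the right), and the local Lipschitz bound keeps all difference quotients finite as needed. The paper itself gives no proof of this proposition — it is quoted from Clarke and Chang — and your argument is essentially the one in those references, so there is nothing further to compare.
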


The next lemma is a technical result that will be used in the proof of Proposition \ref{7}.

	\begin{lemma}\label{C1}
		Suppose $(F_{*})$ and fix $u_{0}\in H^{1}(\mathbb{R}^{N})$. If $w \in L^{\Phi}(\mathbb{R}^{N})\backslash\{0\}$, then
		\begin{equation*}
		\limsup_{ \lambda \downarrow 0} \left[\int_{B_{R}^{c}}\frac{|F(x,u_{0}+\lambda w)- F(x,u_{0})|}{\lambda}\right]\leq C_{1}\int_{B_{R}^{c}} \Phi'(|u_{0}|) |w|,\;\forall\; R>0.
		\end{equation*}
		Consequently, since $\Phi'(|u_{0}|) |w| \in L^{1}(\mathbb{R}^{N})$, we get
		\begin{equation*}
		\lim_{R\rightarrow +\infty}\left(\limsup_{ \lambda \downarrow 0} \left[\int_{B_{R}^{c}}\frac{|F(x,u_{0}+\lambda w)- F(x,u_{0})|}{\lambda}\right]\right)=0.
		\end{equation*}
	\end{lemma}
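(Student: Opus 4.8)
The plan is to reduce the claim to a pointwise estimate on the difference quotient and then pass the $\limsup$ inside the integral by a reverse Fatou argument. First I would fix $x$ and apply Lebourg's mean value theorem to the locally Lipschitz function $t\mapsto F(x,t)$ on the interval joining $u_{0}(x)$ and $u_{0}(x)+\lambda w(x)$: there exist $c_{\lambda}(x)$ between these two values and $\xi_{\lambda}(x)\in \partial_{t} F(x,c_{\lambda}(x))$ with $F(x,u_{0}+\lambda w)-F(x,u_{0})=\lambda\,\xi_{\lambda}(x)\,w(x)$. Dividing by $\lambda$ and invoking $(F_{*})$ gives the pointwise bound
$$\frac{|F(x,u_{0}+\lambda w)-F(x,u_{0})|}{\lambda}=|\xi_{\lambda}(x)|\,|w(x)|\le C_{1}\,\Phi'(|c_{\lambda}(x)|)\,|w(x)|.$$

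For $\lambda\in(0,1]$ one has $|c_{\lambda}(x)|\le |u_{0}(x)|+|w(x)|$, and since $\Phi'$ is nondecreasing on $[0,+\infty)$ (as $\Phi$ is convex) this yields the $\lambda$-uniform domination $g_{\lambda}:=\frac{|F(x,u_{0}+\lambda w)-F(x,u_{0})|}{\lambda}\le C_{1}\,\Phi'(|u_{0}|+|w|)\,|w|=:G$ on $B_{R}^{c}$. The key integrability fact is that $G\in L^{1}(\mathbb{R}^{N})$: indeed, (\ref{emphi*}) together with Young's equality gives $\tilde{\Phi}(\Phi'(s))=s\Phi'(s)-\Phi(s)\le (p-1)\Phi(s)$, so $\Phi'(|u_{0}|+|w|)\in L^{\tilde{\Phi}}(\mathbb{R}^{N})$ whenever $|u_{0}|+|w|\in L^{\Phi}(\mathbb{R}^{N})$, which holds because $u_{0}\in H^{1}(\mathbb{R}^{N})\hookrightarrow L^{\Phi}(\mathbb{R}^{N})$, $w\in L^{\Phi}(\mathbb{R}^{N})$, and $L^{\Phi}$ is a vector space; the Orlicz--H\"older inequality then gives $\int_{\mathbb{R}^{N}}\Phi'(|u_{0}|+|w|)|w|\le 2\,\|\Phi'(|u_{0}|+|w|)\|_{\tilde{\Phi}}\,\|w\|_{\Phi}<+\infty$. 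The same argument with $u_{0}$ in place of $|u_{0}|+|w|$ shows $\Phi'(|u_{0}|)|w|\in L^{1}(\mathbb{R}^{N})$, which is the integrability asserted in the statement.

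With the domination in hand I would compute the pointwise $\limsup$ of $g_{\lambda}$. As $\lambda\downarrow 0$ one has $c_{\lambda}(x)\to u_{0}(x)$, and the upper semicontinuity of the generalized gradient $t\mapsto\partial_{t} F(x,t)=[\underline{f}(x,t),\overline{f}(x,t)]$ forces every accumulation point of $\xi_{\lambda}(x)$ to lie in $\partial_{t} F(x,u_{0}(x))$; hence $\limsup_{\lambda\downarrow 0}|\xi_{\lambda}(x)|\le \max\{|\xi|:\xi\in\partial_{t} F(x,u_{0}(x))\}\le C_{1}\Phi'(|u_{0}(x)|)$ by $(F_{*})$, so that $\limsup_{\lambda\downarrow 0}g_{\lambda}(x)\le C_{1}\Phi'(|u_{0}(x)|)|w(x)|$ a.e. Choosing a sequence $\lambda_{n}\downarrow 0$ realizing the outer $\limsup$ and applying the reverse Fatou lemma (legitimate since $0\le g_{\lambda_{n}}\le G\in L^{1}$) gives
$$\limsup_{\lambda\downarrow 0}\int_{B_{R}^{c}} g_{\lambda} \le \int_{B_{R}^{c}}\limsup_{\lambda\downarrow 0}g_{\lambda} \le C_{1}\int_{B_{R}^{c}}\Phi'(|u_{0}|)|w|,$$
which is exactly the first inequality. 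The second assertion is then immediate: since $\Phi'(|u_{0}|)|w|\in L^{1}(\mathbb{R}^{N})$, the absolute continuity of the Lebesgue integral yields $\int_{B_{R}^{c}}\Phi'(|u_{0}|)|w|\to 0$ as $R\to+\infty$.

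The step I expect to be most delicate is the interchange of $\limsup$ and integral: the difference quotients are neither monotone nor convergent and may oscillate in $\lambda$, so dominated convergence does not apply directly; the argument genuinely relies on the one-sided (reverse Fatou) inequality together with the $\lambda$-uniform, $x$-integrable majorant $G$, whose construction in turn hinges on the Orlicz-space estimate $\Phi'(v)\in L^{\tilde{\Phi}}$ for $v\in L^{\Phi}$. Securing this majorant, rather than the pointwise mean value identity, is the crux of the proof.
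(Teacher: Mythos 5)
Your proof is correct and follows essentially the same route as the paper's: Lebourg's mean value theorem combined with $(F_{*})$ and the monotonicity of $\Phi'$ gives the pointwise bound on the difference quotient, and an integrable, $\lambda$-uniform majorant lets the limit pass inside the integral, with absolute continuity of the integral finishing the second assertion. The only cosmetic differences are that the paper applies dominated convergence to the convergent majorant family $\Phi'(|u_{0}|+\lambda|w|)\,|w|$ rather than reverse Fatou to the difference quotients themselves (so it never needs the pointwise $\limsup$ of $g_{\lambda}$ or the upper semicontinuity of $\partial_{t}F$), and it checks integrability of the majorant by expanding $\Phi'(s)=2s+ps^{p-1}$ explicitly and using H\"older term by term instead of your Young--Orlicz duality estimate $\tilde{\Phi}(\Phi'(s))\le (p-1)\Phi(s)$.
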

	\begin{proof}
		
		For $w \in L^{\Phi}(\mathbb{R}^{N})\backslash\{0\}$, by Lebourg's Theorem, there is $\xi_{\lambda} \in \partial_{t} F(x, \theta_{\lambda})$ with $\theta_{\lambda} \in [u_{0}, u_{\lambda}]$, where $u_{\lambda}=u_{0}+\lambda w$, such that
		\begin{equation*}
		|F(x,u_{0}+\lambda w)- F(x,u_{0})|= |\left<\xi_{\lambda}, u_{0}-u_{\lambda}\right>|=\lambda|\left<\xi_{\lambda}, w \right>|.
		\end{equation*}
		Using $(F_{*})$, we have
		\begin{eqnarray}
		|F(x,u_{0}+\lambda w)- F(x,u_{0})|&\leq& \lambda |\xi_{\lambda}|\;|w| \nonumber \\
		&\leq& \lambda C_{1}\Phi'(|\theta_{\lambda}|) |w|, \nonumber
		\end{eqnarray}
		that is,
		\begin{equation*}
		\frac{|F(x,u_{0}+\lambda w)- F(x,u_{0})|}{\lambda}\leq C_{1} \Phi'(|\theta_{\lambda}|) |w|.
		\end{equation*}
		Due to the fact that $\theta_{\lambda}=tu_{0}+(1-t)u_{\lambda}$, $0<t<1$, we get
		\begin{equation*}
		|\theta_{\lambda}(x)|\leq |u_{0}(x)|+ \lambda |w(x)|:=\eta_{\lambda}(x).
		\end{equation*}
		
		Remember that we are working with the $N$-function $\Phi(t)= |t|^{2}+|t|^{p}$, $p\in (2,2^{*})$, then $\Phi'$ is increasing for $t>0$, that is, $\Phi'(|\theta
		_{\lambda}|)\leq \Phi'(\eta_{\lambda})$.
		Therefore,
		\begin{equation}\label{C3}
		\limsup_{ \lambda \downarrow 0}\int_{B_{R}^{c}} \frac{|F(x,u_{0}+\lambda w)- F(x,u_{0})|}{\lambda}\leq C_{1} \limsup_{ \lambda \downarrow 0}\int_{B_{R}^{c}}\Phi'(\theta_{\lambda}) |w|,\;\forall\; R>0,
		\end{equation}
		where $B_{R}^{c}$ denotes the complementary of $B_{R}$ in $\mathbb{R}^{N}.$
		
		Now let us show that
		\begin{equation*}
		\lim_{\lambda\rightarrow 0^{+}} \int_{B_{R}^{c}} \Phi'(\theta_{\lambda}) |w|= \int_{B_{R}^{c}} \Phi'(|u_{0}|) |w|,\;\forall\; R>0.
		\end{equation*}
		First note that
		$$\Phi'(\theta_{\lambda})|w(x)| \rightarrow \Phi'(|u_{0}|)|w(x)|\;\;\mbox{a.e in}\;\; B_{R}^{c}\;\;\mbox{as}\;\;\lambda\rightarrow 0^{+}.$$
		On the other hand, for $0<\lambda \leq 1$, we obtain
		\begin{eqnarray}
		|\Phi'(\theta_{\lambda})||w| & \leq & [2(|u_{0}|+  \lambda |w| )+ p(|u_{0}|+  \lambda |w|)^{p-1}]\; |w|\nonumber \\
		&\leq & 2|u_{0}|\;|w|+2|w|^{2}+p2^{p-1}(|u_{0}|^{p-1}\;|w|+ |w|^{p})\in L^{1}(\mathbb{R}^{N}).\nonumber
		\end{eqnarray}
		By Lebesgue's dominated convergence theorem
		\begin{equation}\label{C2}
		\lim_{\lambda\rightarrow 0^{+}} \int_{B_{R}^{c}} \Phi'(\theta_{\lambda}) |w|= \int_{B_{R}^{c}} \Phi'(|u_{0}|) |w|.
		\end{equation}
		 (\ref{C3}) and (\ref{C2})  imply
		\begin{eqnarray}
		\limsup_{ \lambda \downarrow 0} \int_{B_{R}^{c}}\frac{|F(x,u_{0}+\lambda w)- F(x,u_{0})|}{\lambda}&\leq& C_{1} \limsup_{ \lambda \downarrow 0} \int_{B_{R}^{c}} \Phi'(\theta_{\lambda}) |w| \nonumber \\
		&=& C_{1}\lim_{\lambda\rightarrow 0^{+}} \int_{B_{R}^{c}} \Phi'(\theta_{\lambda}) |w| \nonumber \\
		&=&C_{1}\int_{B_{R}^{c}} \Phi'(|u_{0}|) |w|, \;\forall\; R>0, \nonumber
		\end{eqnarray}
		that is,
		\begin{equation*}
		\limsup_{ \lambda \downarrow 0} \int_{B_{R}^{c}}\frac{|F(x,u_{0}+\lambda w)- F(x,u_{0})|}{\lambda}\leq C_{1}\int_{B_{R}^{c}} \Phi'(|u_{0}|) |w|,\;\forall\; R>0.
		\end{equation*}
	\end{proof}

	For the next result we need to fix some notations. In what follows, for each $R>0$, we set
	\begin{eqnarray}
	\Psi_{R}:&L^{\Phi}(B_{R}(0))&\longrightarrow \mathbb{R} \nonumber \\
	&w&\longmapsto \Psi_{R}(w)= \int_{B_{R}(0)} F(x,w). \nonumber
	\end{eqnarray}
	Furthermore, for each $\psi \in L^{\Phi}(B_{R}(0))$, let us consider the function $\tilde{\psi} \in L^{\Phi}(\mathbb{R}^{N})$ given by
	\begin{eqnarray} 
	\tilde{\psi}(x)=\left\{\begin{array}{c}
	\psi(x),\;\; x\in B_{R}(0) \nonumber \\
	0,\;\; x \in B_{R}^{c}(0). \nonumber
	\end{array}
	\right.
	\end{eqnarray} 
	
	\begin{proposition}\label{7}
	Assume $(f_{1})-(f_{3})$. If $(u_{n}) \subset H^{1}(\mathbb{R}^{N})$  is such that 
		$u_{n}\rightharpoonup u_{0}$ in $H^{1}(\mathbb{R}^{N})$ and $\rho_{n} \in \partial \Psi(u_{n})$ satisfies $\rho_{n} \stackrel{\ast}{\rightharpoonup} \rho_{0}$ in $(H^{1}(\mathbb{R}^{N}))^{*}$, then $\rho_{0} \in \partial \Psi(u_{0})$. 
	\end{proposition}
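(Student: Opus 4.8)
The plan is to reduce the statement on all of $\mathbb{R}^{N}$ to the local closedness result already available in Proposition \ref{38}, using the compactness of the Sobolev embedding on balls together with the tail estimate of Lemma \ref{C1}. First I would record two preliminary facts. Since $u_{n}\rightharpoonup u_{0}$ in $H^{1}(\mathbb{R}^{N})$, the sequence is bounded, and the Rellich--Kondrachov theorem gives, for each fixed $R$, compactness of $H^{1}(B_{R}(0))\hookrightarrow L^{q}(B_{R}(0))$ for $q\in[1,2^{*})$; as $\Phi(t)=|t|^{2}+|t|^{p}$ with $2<p<2^{*}$, this yields $\int_{B_{R}}\Phi(|u_{n}-u_{0}|)\to 0$, i.e. $u_{n}\to u_{0}$ in $L^{\Phi}(B_{R}(0))$, at least along a subsequence. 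A diagonal argument over $R\in\mathbb{N}$ produces a single subsequence (still denoted $(u_{n})$) with $u_{n}\to u_{0}$ in $L^{\Phi}(B_{R}(0))$ for every $R$. Next, condition $(F_{*})$ forces $|\rho_{n}(x)|\leq C_{1}\Phi'(|u_{n}(x)|)$ a.e., so $(\rho_{n})$ is bounded in the reflexive space $L^{\tilde{\Phi}}(\mathbb{R}^{N})$; extracting further, $\rho_{n}\rightharpoonup\tilde{\rho}_{0}$ in $L^{\tilde{\Phi}}(\mathbb{R}^{N})$. Testing against $v\in H^{1}(\mathbb{R}^{N})$ and using $\rho_{n}\stackrel{\ast}{\rightharpoonup}\rho_{0}$ in $(H^{1}(\mathbb{R}^{N}))^{*}$ together with the density of $H^{1}(\mathbb{R}^{N})$ in $L^{\Phi}(\mathbb{R}^{N})$ identifies $\rho_{0}$ with $\tilde{\rho}_{0}$, so from now on $\rho_{0}$ is represented by $\tilde{\rho}_{0}\in L^{\tilde{\Phi}}(\mathbb{R}^{N})\approx(L^{\Phi}(\mathbb{R}^{N}))^{*}$.

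The key localization step rests on the identity
\begin{equation*}
\Psi^{\circ}(u; v\chi_{B_{R}}) = \Psi_{R}^{\circ}(u|_{B_{R}}; v|_{B_{R}}),
\end{equation*}
which follows directly from the definition of the generalized directional derivative because $F(x,\cdot)$ acts pointwise and $v\chi_{B_{R}}$ vanishes on $B_{R}^{c}$. Consequently, if $\rho\in\partial\Psi(u)$, then testing only with directions supported in $B_{R}$ shows $\rho|_{B_{R}}\in\partial\Psi_{R}(u|_{B_{R}})$. Applying this to each $\rho_{n}$ and invoking Proposition \ref{38} on $\Omega=B_{R}(0)$ --- whose hypotheses are met by the strong convergence $u_{n}\to u_{0}$ in $L^{\Phi}(B_{R})$ and the weak-$\ast$ convergence $\rho_{n}|_{B_{R}}\stackrel{\ast}{\rightharpoonup}\tilde{\rho}_{0}|_{B_{R}}$ in $L^{\tilde{\Phi}}(B_{R})$ --- I obtain $\tilde{\rho}_{0}|_{B_{R}}\in\partial\Psi_{R}(u_{0}|_{B_{R}})$ for every $R\in\mathbb{N}$. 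In particular, Theorem \ref{72} applied on each $B_{R}$ together with $(F_{*})$ gives the pointwise bound $|\tilde{\rho}_{0}(x)|\leq C_{1}\Phi'(|u_{0}(x)|)$ a.e. in $\mathbb{R}^{N}$.

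Finally I would globalize. Fix $v\in L^{\Phi}(\mathbb{R}^{N})$ and split $\int_{\mathbb{R}^{N}}\tilde{\rho}_{0}v=\int_{B_{R}}\tilde{\rho}_{0}v+\int_{B_{R}^{c}}\tilde{\rho}_{0}v$. For the first term the previous step and the localization identity give $\int_{B_{R}}\tilde{\rho}_{0}v\leq\Psi_{R}^{\circ}(u_{0}|_{B_{R}}; v|_{B_{R}})=\Psi^{\circ}(u_{0}; v\chi_{B_{R}})$, and the convexity (hence subadditivity) of $w\mapsto\Psi^{\circ}(u_{0}; w)$, recorded after the definition of the generalized directional derivative, bounds this by $\Psi^{\circ}(u_{0}; v)+\Psi^{\circ}(u_{0}; -v\chi_{B_{R}^{c}})$. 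The term $\Psi^{\circ}(u_{0}; -v\chi_{B_{R}^{c}})$ is precisely the tail controlled by Lemma \ref{C1}, hence does not exceed $C_{1}\int_{B_{R}^{c}}\Phi'(|u_{0}|)|v|$, while $\int_{B_{R}^{c}}\tilde{\rho}_{0}v$ is likewise dominated by $C_{1}\int_{B_{R}^{c}}\Phi'(|u_{0}|)|v|$ through the pointwise bound above; since $\Phi'(|u_{0}|)|v|\in L^{1}(\mathbb{R}^{N})$ by the Orlicz H\"older inequality, both quantities tend to $0$ as $R\to+\infty$. Passing to the limit yields $\int_{\mathbb{R}^{N}}\tilde{\rho}_{0}v\leq\Psi^{\circ}(u_{0}; v)$ for every $v$, that is $\rho_{0}\in\partial\Psi(u_{0})$.

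The main obstacle I anticipate is the tail analysis: making the control of $\Psi^{\circ}(u_{0}; -v\chi_{B_{R}^{c}})$ rigorous despite the auxiliary shift $h$ present in the generalized directional derivative, which is exactly the role played by the uniform estimate in Lemma \ref{C1}. A secondary technical point is bookkeeping the successive subsequence extractions so that the weak-$\ast$ limit is genuinely the prescribed $\rho_{0}$; this is handled by the uniqueness of the limit, which forces the full sequence to converge and makes the conclusion independent of the extracted subsequence.
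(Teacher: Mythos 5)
Your proposal is correct and follows essentially the same route as the paper: localize to balls $B_R$, observe that restrictions of the $\rho_n$ lie in $\partial\Psi_R(u_{n,R})$, apply Proposition \ref{38} there, and then let $R\to+\infty$ using the tail estimate of Lemma \ref{C1}. The one place where your bookkeeping differs is the final limit: the paper proves directly that $\lim_{R\to+\infty}\Psi_R^{\circ}(u_{0,R},w)\leq\Psi^{\circ}(u_0,w)$ by choosing the auxiliary shift $\tilde h=h\chi_{B_R}$, so that the resulting tail term $\limsup_{\lambda\downarrow 0}\int_{B_R^c}\lambda^{-1}[F(x,u_0)-F(x,u_0+\lambda w)]$ contains no shift and is exactly what Lemma \ref{C1} controls, whereas your subadditivity step produces $\Psi^{\circ}(u_0;-v\chi_{B_R^c})$, whose definition involves a $\limsup$ over arbitrary shifts $h\to 0$ in $L^{\Phi}(\mathbb{R}^N)$ and therefore needs a slightly strengthened version of Lemma \ref{C1} (obtainable by the same Lebourg-plus-H\"older estimates, since the extra $h$-dependent terms vanish in the limit). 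You correctly flagged this as the main obstacle; with that point patched, the argument is sound.
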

	\begin{proof}
		
		Hereafter,	for each $R>0$, we denote by $u_{n,R}, \rho_{n,R}, u_{0,R}$ and $\rho_{0, R}$ the restriction of the functions $u_{n}, \rho_{n}, u_{0}$ and $\rho_{0}$ to $B_{R}=B_R(0)$ respectively.
		
		For each $\psi \in  L^{\Phi}(B_{R})$, a simple computation yields
		\begin{equation*}
		\int_{B_{R}} \rho_{n,R}\; \psi = \int_{\mathbb{R}^{N}} \rho_{n}\; \tilde{\psi}.
		\end{equation*}
		In addition, $\Psi_{R}^{0}(u_{n,R}, \psi)= \Psi^{0}(u_{n}, \tilde{\psi})$. In fact, 
		\begin{eqnarray}
		\Psi_{R}^{0}(u_{n,R}, \psi)&=& \limsup_{h\rightarrow 0, \lambda \downarrow 0}\frac{1}{\lambda} [\Psi_{R}(u_{n,R}+h+ \lambda \psi )-\Psi_{R}(u_{n,R}+h)] \nonumber \\
		&=& \limsup_{h\rightarrow 0, \lambda \downarrow 0} \frac{1}{\lambda} \left[\int_{B_{R}}[F(x,u_{n,R}+h+ \lambda \psi)-F(x,u_{n,R}+h)] \right] \nonumber \\
		&=& \limsup_{\tilde{h}\rightarrow 0, \lambda \downarrow 0} \frac{1}{\lambda} \left[\int_{\mathbb{R}^{N}}[F(x, u_{n}+\tilde{h}+ \lambda \tilde{\psi})-F(x,u_{n}+\tilde{h})] \right] \nonumber \\
		&\leq & \Psi^{0}(u_{n}, \tilde{\psi}). \nonumber
		\end{eqnarray}
		Analogously $\Psi^{0}(u_{n}, \tilde{\psi}) \leq \Psi_{R}^{0}(u_{n,R}, \psi)$, and the equality $\Psi_{R}^{0}(u_{n,R}, \psi)= \Psi^{0}(u_{n}, \tilde{\psi})$ is proved.
		
		Knowing $\rho_{n} \in \partial \Psi (u_{n})$, we must have 
		\begin{equation*}
		\int_{\mathbb{R}^{N}} \rho_{n} v \leq \Psi^{0}(u_{n}, v),\;\;\forall\; v \in L^{\Phi}(\mathbb{R}^{N}), 
		\end{equation*} 
and so, 
		\begin{equation*}
		\int_{\mathbb{R}^{N}} \rho_{n} \tilde{\psi} \leq \Psi^{0}(u_{n}, \tilde{\psi}), 
		\end{equation*} 
		that is,
		\begin{equation*}
		\int_{B_{R}} \rho_{n, R} \psi \leq \Psi_{R}^{0}(u_{n, R}, \psi), \;\;\forall\;  \psi \in  L^{\Phi}(B_{R}).
		\end{equation*} 
		This shows $ \rho_{n, R} \in \partial \Psi_{R}(u_{n, R})$ for all $n \in \mathbb{N}$.
		
		By hypothesis $\rho_{n} \stackrel{\ast}{\rightharpoonup} \rho_{0}$ in $(H^{1}(\mathbb{R}^{N}))^{*}$, that is, 
		$$
		\int_{\mathbb{R}^{N}} \rho_{n}\; v\rightarrow \int_{\mathbb{R}^{N}} \rho_{0} \; v,\;\;\forall\; v \in H^{1}(\mathbb{R}^{N}),
		$$
	then in particular,
		\begin{equation}\label{40}
		\int_{\mathbb{R}^{N}} \rho_{n}\; v\rightarrow \int_{\mathbb{R}^{N}} \rho_{0} \; v,\;\;\forall\; v \in C_{0}^{\infty}(\mathbb{R}^{N}).
		\end{equation}
	Since 
		$$\overline{C_{0}^{\infty}(\mathbb{R}^{N})}^{||\cdot||_{\Phi}}= L^{\Phi}(\mathbb{R}^{N}),$$ 
		for each $w \in L^{\Phi}(\mathbb{R}^{N})$, there is $(v_{k}) \subset C_{0}^{\infty}(\mathbb{R}^{N})$ such that
		\begin{equation}\label{39}
		v_{k}\rightarrow w\;\;\mbox{in}\;\; L^{\Phi}(\mathbb{R}^{N}).
		\end{equation}
	From this, 
		\begin{eqnarray}
		\left \lvert \int_{\mathbb{R}^{N}} \rho_{n} w- \int_{\mathbb{R}^{N}} \rho_{0} w \right\lvert&=& \left \lvert \int_{\mathbb{R}^{N}} \rho_{n}(w-v_{k})- \int_{\mathbb{R}^{N}} \rho_{0}(w-v_{k}) + \int_{\mathbb{R}^{N}}  v_{k}(\rho_{n} -\rho_{0}) \right\lvert  \nonumber \\
		&\leq& 2 ||w-v_{k}||_{L^{\Phi}(\mathbb{R}^{N})}(||\rho_{n}||_{L^{\tilde{\Phi}}(\mathbb{R}^{N})}+ ||\rho_{0}||_{L^{\tilde{\Phi}}(\mathbb{R}^{N})}) + \left \lvert\int_{\mathbb{R}^{N}}  v_{k}(\rho_{n} -\rho_{0}) \right\lvert,  \nonumber
		\end{eqnarray}
		that is, 
		\begin{equation*}
		\left \lvert \int_{\mathbb{R}^{N}} \rho_{n} w- \int_{\mathbb{R}^{N}} \rho_{0} w \right\lvert  \leq  2 ||w-v_{k}||_{L^{\Phi}(\mathbb{R}^{N})}(M+ ||\rho_{0}||_{L^{\tilde{\Phi}}(\mathbb{R}^{N})}) +  \left \lvert\int_{\mathbb{R}^{N}}  v_{k}(\rho_{n} -\rho_{0}) \right\lvert ,
		\end{equation*}
		where $||\rho_{n}||_{L^{\tilde{\Phi}}(\mathbb{R}^{N})} \leq M$ for all $n \in \mathbb{N}$. 
		
		On the other hand, given $\varepsilon>0$, by (\ref{39}), we can fix $k \in \mathbb{N}$ such that 
		\begin{equation}\label{41}
		||w-v_{k}||_{L^{\Phi}(\mathbb{R}^{N})}(M+ ||\rho_{0}||_{L^{\tilde{\Phi}}(\mathbb{R}^{N})})<\frac{\varepsilon}{2}.
		\end{equation}
		For fixed $k \in \mathbb{N}$ satisfying (\ref{41}), from (\ref{40}), there exists $n_{0}(k) \in \mathbb{N}$ such that
		\begin{equation}\label{42}
		\left \lvert\int_{\mathbb{R}^{N}}  v_{k}(\rho_{n} -\rho_{0}) \right\lvert <\frac{\varepsilon}{2},\;\;\forall\; n\geq n_{0}(k).
		\end{equation}
		Accordingly, from (\ref{41}) and (\ref{42}), 
		\begin{equation*}
		\left \lvert \int_{\mathbb{R}^{N}} \rho_{n} w- \int_{\mathbb{R}^{N}} \rho_{0} w \right\lvert< \frac{\varepsilon}{2} + \frac{\varepsilon}{2}=\varepsilon, \;\;\mbox{for}\;\; n \geq n_{0}(k). \nonumber
		\end{equation*}
		Note that, $ \rho_{n, R} \stackrel{\ast}{\rightharpoonup} \rho_{0,R}$ in $(L^{\Phi}(B_{R}))^{*}$. In fact, given $\psi \in L^{\Phi}(B_{R})$
		\begin{eqnarray}
		\int_{B_{R}} \rho_{n, R}\;\psi \rightarrow \int_{B_{R}} \rho_{0, R} \; \psi &\Leftrightarrow& \int_{\mathbb{R}^{N}} \rho_{n} \tilde{\psi} \rightarrow \int_{\mathbb{R}^{N}} \rho_{0}\; \tilde{\psi}. \nonumber 
		\end{eqnarray}
	As $u_{n, R}\rightarrow u_{0,R}\;\; \mbox{in}\;\; L^{\Phi}(B_{R})$ and $\rho_{n, R}\stackrel{\ast}{\rightharpoonup} \rho_{0,R}\;\;\mbox{in}\;\; L^{\tilde{\Phi}}(B_{R})$ with $ \rho_{n, R} \in \partial \Psi_{R}(u_{n, R})$, by Proposition \ref{38}, 
		$$\rho_{0,R} \in  \partial \Psi_{R}(u_{0,R}).$$
	By definition of generalized gradient, we have 
	\begin{equation}\label{1}
	\left<\rho_{0,R}, v\right> \leq \Psi_{R}^{\circ}(u_{0,R},v),\;\forall\; v \in L^{\Phi}(B_{R}).
	\end{equation}
	\begin{claim} \label{ZZZ1}
		$$\lim_{R\rightarrow +\infty}\Psi_{R}^{\circ}(u_{0,R},w) \leq \Psi^{\circ}(u_{0},w),\;\forall\; w \in L^{\Phi}(\mathbb{R}^{N}).$$
	\end{claim}
	Indeed, given $w\in L^{\Phi}(\mathbb{R}^{N})$ and $R>0$, we obtain
	\begin{eqnarray}
	\Psi_{R}^{0}(u_{0,R}, w)&=& \limsup_{h\rightarrow 0, \lambda \downarrow 0}\frac{1}{\lambda} [\Psi_{R}(u_{0,R}+h+ \lambda w )-\Psi_{R}(u_{0,R}+h)] \nonumber \\
	&=& \limsup_{h\rightarrow 0, \lambda \downarrow 0} \frac{1}{\lambda} \left[\int_{B_{R}}[F(x,u_{0,R}+h+ \lambda w)-F(x,u_{0,R}+h)] \right] \nonumber 
	\end{eqnarray}
	
Setting $\tilde{h}:\mathbb{R}^{N}\rightarrow \mathbb{R}$ as being $\tilde{h}(x)=h(x)\chi_{B_{R}}(x)$, it follows that, 
	\begin{eqnarray}
	&&\int_{B_{R}}F(x, u_{0,R}+h+\lambda w)= \int_{\mathbb{R}^{N}} F(x, u_{0}+\tilde{h}+\lambda w)- \int_{B_{R}^{c}}F(x, u_{0}+\lambda w)\nonumber \\
	&&\mbox{and}\nonumber \\
	&&\int_{B_{R}}F(x, u_{0,R}+h)= \int_{\mathbb{R}^{N}} F(x, u_{0}+\tilde{h})- \int_{B_{R}^{c}}F(x, u_{0}).\nonumber 
	\end{eqnarray}
Therefore, 
	\begin{eqnarray}
	\Psi_{R}^{0}(u_{0,R}, w) &\leq & \limsup_{\tilde{h}\rightarrow 0, \lambda \downarrow 0} \frac{1}{\lambda} \left[\int_{\mathbb{R}^{N}}F(x,u_{0}+\tilde{h}+ \lambda w)-\int_{\mathbb{R}^{N}} F(x,u_{0}+\tilde{h})\right]+ \nonumber\\
	&+& \limsup_{\tilde{h}\rightarrow 0, \lambda \downarrow 0} \left[\int_{B_{R}^{c}}\frac{[F(x,u_{0})-F(x,u_{0}+ \lambda w)]}{\lambda}\right] \nonumber \\
	&=&\limsup_{\tilde{h}\rightarrow 0, \lambda \downarrow 0} \frac{1}{\lambda} \left[\int_{\mathbb{R}^{N}}F(x,u_{0}+\tilde{h}+ \lambda w)-\int_{\mathbb{R}^{N}} F(x,u_{0}+\tilde{h})\right]+ \nonumber\\
	&+& \limsup_{ \lambda \downarrow 0} \left[\int_{B_{R}^{c}}\frac{[F(x,u_{0})-F(x,u_{0}+ \lambda w)]}{\lambda}\right]. \nonumber
	\end{eqnarray}
	By Lemma \ref{C1},
	\begin{equation*}
	\lim_{R\rightarrow +\infty}\left(\limsup_{ \lambda \downarrow 0} \left[\int_{B_{R}^{c}}\frac{[F(x,u_{0})-F(x,u_{0}+ \lambda w)]}{\lambda}\right]\right)=0.
	\end{equation*}
	Thereby,
	\begin{eqnarray}
	\lim_{R\rightarrow +\infty}\Psi_{R}^{\circ}(u_{0,R},w) &\leq &\lim_{R\rightarrow +\infty}\left(\limsup_{\tilde{h}\rightarrow 0, \lambda \downarrow 0} \frac{1}{\lambda} \left[\int_{\mathbb{R}^{N}}F(x,u_{0}+\tilde{h}+ \lambda w)-\int_{\mathbb{R}^{N}} F(x,u_{0}+\tilde{h})\right] \right) \nonumber \\
	&\leq& \lim_{R\rightarrow +\infty} \Psi^{\circ}(u_{0},w)=\Psi^{\circ}(u_{0},w),\nonumber
	\end{eqnarray}
which completes the proof of the claim.
	
	By (\ref{1}) and Claim \ref{ZZZ1},
	\begin{equation*}
	\lim_{R\rightarrow +\infty} \left<\rho_{0,R}, w\right> \leq \Psi^{\circ}(u_{0},w),\;\forall\; w \in L^{\Phi}(\mathbb{R}^{N}).
	\end{equation*}
    Once $\rho_{0, R} \to \rho_{0}$ in $L^{\tilde{\Phi}}(\mathbb{R}^N)$ as $R\rightarrow +\infty$, we conclude
	$$\left<\rho_{0}, w\right> \leq \Psi^{\circ}(u_{0},w),\;\forall\; w \in L^{\Phi}(\mathbb{R}^{N}),$$
	that is, $\rho_{0} \in \partial \Psi(u_{0})\subset L^{\tilde{\Phi}}(\mathbb{R}^{N})$, as asserted.	
	\end{proof}

	\section{Generalized linking theorem}

	The main goal this section is to prove a version of the seminal linking theorem developed in Kryszewski and Szulkin \cite{Kryszewski} ( see also \cite[Chapter 6]{MW} ), by supposing that the functional is only Locally Lipschitz.

	In what follows  $Y$ denotes a Hilbert space that has a total orthonormal sequence denoted by $(e_{k})$.  Using that sequence, let us define the norm
	\begin{eqnarray}
	||\cdot||_0: &Y& \longrightarrow \mathbb{R} \nonumber \\
	&u&\longmapsto ||\cdot ||_0=\sum_{k=1}^{\infty} \frac{1}{2^{k}} |(u, e_{k})|, \nonumber
	\end{eqnarray}
	where $(\cdot, \cdot)$ denotes the inner product in $Y$. From definition $||\cdot||_0$ it follows that  
	\begin{equation*}
	||\cdot||_0 \leq ||u||\;,\; \forall\; u \in Y.
	\end{equation*}

	The topology on $Y$ generated by $||\cdot||_0$ will be denoted by $\sigma$ and all topological notions related to it will include this symbol.
	
	In the sequel we recall some results involving the $\sigma$ topology that can be found in \cite{Kryszewski}. 
	
	\begin{proposition}\label{30}
		If $(u_{n})$ is bounded in $Y$, then
		$$u_{n}\rightharpoonup u\;\;\mbox{in}\;\; Y\Leftrightarrow  u_{n}\stackrel{\sigma}{\rightarrow} u\;\;\mbox{in}\;\;Y.$$
	\end{proposition}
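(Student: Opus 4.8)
The plan is to prove the two implications separately, in each case exploiting the uniform bound furnished by boundedness together with the rapidly decaying weights $2^{-k}$. Since $(e_k)$ is total and orthonormal it is an orthonormal basis, so Parseval's identity holds and $\|\cdot\|_0$ is finite because $|(v,e_k)|\le\|v\|$. Fix $M>0$ with $\|u_n\|\le M$ for all $n$. In both directions I will first record the coordinatewise convergence $(u_n,e_k)\to(u,e_k)$ for every fixed $k$; granting this, Fatou's lemma applied to Parseval's sum gives $\|u\|^2=\sum_k\lim_n|(u_n,e_k)|^2\le\liminf_n\|u_n\|^2\le M^2$, hence $\|u_n-u\|\le 2M$ throughout.

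For the implication $u_n\rightharpoonup u\Rightarrow u_n\stackrel{\sigma}{\rightarrow}u$, testing weak convergence against the fixed vector $e_k$ yields the coordinate limits above. To show $\|u_n-u\|_0=\sum_k 2^{-k}|(u_n-u,e_k)|\to 0$ I split the series at an index $K$: given $\varepsilon>0$, the bound $|(u_n-u,e_k)|\le 2M$ lets me choose $K$ so that the tail $\sum_{k>K}2^{-k}(2M)<\varepsilon/2$, while the finite head $\sum_{k\le K}2^{-k}|(u_n-u,e_k)|$ is a sum of finitely many terms each tending to $0$, hence $<\varepsilon/2$ for $n$ large. This is just dominated convergence for series with dominating sequence $2^{-k}(2M)$.

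For the converse, assume $(u_n)$ bounded and $\|u_n-u\|_0\to 0$. Since $2^{-k}|(u_n-u,e_k)|\le\|u_n-u\|_0$ for each $k$, the coordinate limits $(u_n,e_k)\to(u,e_k)$ follow at once. To upgrade these to weak convergence, fix $v\in Y$ and $\varepsilon>0$; by totality of $(e_k)$ there is $v_K=\sum_{k\le K}(v,e_k)e_k$ with $\|v-v_K\|<\varepsilon$. Writing $(u_n-u,v)=(u_n-u,v_K)+(u_n-u,v-v_K)$, the first term is a finite sum $\sum_{k\le K}(v,e_k)(u_n-u,e_k)\to 0$, while the second is bounded by $\|u_n-u\|\,\|v-v_K\|\le 2M\varepsilon$. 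Thus $\limsup_n|(u_n-u,v)|\le 2M\varepsilon$, and letting $\varepsilon\downarrow 0$ gives $(u_n,v)\to(u,v)$ for all $v$, i.e. $u_n\rightharpoonup u$.

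The only real subtlety is the indispensable role of the boundedness hypothesis: it supplies the uniform domination $|(u_n-u,e_k)|\le 2M$ in the tail estimate of the first implication and the remainder bound $2M\|v-v_K\|$ in the second. Without it the converse fails---for instance $u_n=2^{n/2}e_n$ has $\|u_n\|_0=2^{-n/2}\to 0$ yet is unbounded and has no weak limit, since testing against $v=\sum_k 2^{-k/2}e_k\in Y$ gives $(u_n,v)=1$ for all $n$. Totality of $(e_k)$ is the other essential ingredient, giving the finite-rank approximation of $v$ in the converse direction; beyond these two points the argument is simply the standard fact that $\|\cdot\|_0$ metrizes the weak topology on bounded sets.
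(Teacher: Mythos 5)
Your proof is correct and is precisely the standard argument for this fact (finite-rank truncation plus the uniform bound $\|u_n-u\|\le 2M$, with Fatou/Parseval supplying $\|u\|\le M$); the paper itself does not prove Proposition \ref{30} but simply recalls it from Kryszewski--Szulkin \cite{Kryszewski}, where the same argument appears. Your counterexample $u_n=2^{n/2}e_n$ correctly isolates why boundedness is indispensable for the converse implication.
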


	\begin{definition}\textbf{(Admissible map)}\\
		Let $U$ be an open bounded subset of $Y$ such that  $\overline{U}$ is $\sigma$-closed. A map $g:\overline{U}\rightarrow Y$ is admissible if:
		\begin{itemize}
			\item [(a)] $0 \notin g(\partial U)$;
			\item [(b)] $g$ is $\sigma$-continuous;
			\item [(c)] each point $u \in U$ has a $\sigma$-neighborhood $\mathcal{N}_{u}$ such that $(I_{d}-g)(\mathcal{N}_{u} \cap U)$ is contained in a finite-dimensional subspace of $Y$.
		\end{itemize}
	\end{definition}
	Let $g: \overline{U}\rightarrow Y$ be admissible. Since $\{0\}$ is $\sigma$-closed of $Y$ and $g$ is $\sigma$-continuous, we infer that $g^{-1}(\{0\})$ is $\sigma$-compact.

	For each $u \in g^{-1}(\{0\})$, consider $\sigma$-neighborhood $\mathcal{N}_{u}$ of $u$ such that $(I_{d}-g)(\mathcal{N}_{u}\cap U)$ is contained in a finite-dimensional subspace of $Y$. Note that
	$$\bigcup_{u \in g^{-1}(\{0\})}\mathcal{N}_{u}$$
	is a $\sigma$-open covering of $g^{-1}(\{0\})$. As  	$g^{-1}(\{0\})$ is $\sigma$-compact, there exist $u_{1}, u_{2},..., u_{m} \in g^{-1}(\{0\})$ such that
	$$g^{-1}(\{0\}) \subset \bigcup_{j=1}^{m} (\mathcal{N}_{u_{j}} \cap U)= V.$$
	In addition, $V$ is open and there exists an finite-dimensional subspace $W$ of $Y$ such that
	$$(I_{d}-g)(V) \subset W.$$ 
	The degree, for an admissible map $g$ in $0$ concerning $U$, is defined by
	$$d(g,U,0)=d_{B}(g\mid_{V\cap W}, V \cap W, 0),$$
	where $d_{B}$ is the Brouwer degree.
	
	\begin{proposition}
		The degree of admissible map is well defined.
	\end{proposition}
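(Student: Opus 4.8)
The plan is to show that the integer $d_B(g|_{V\cap W}, V\cap W, 0)$ depends neither on the finite-dimensional subspace $W$ nor on the open set $V$ produced by the covering construction, relying on the classical reduction and excision properties of the Brouwer degree. First I would check that the right-hand side even makes sense. Since $(e_{k})$ is total, $||\cdot||_0$ is a genuine norm on $Y$, and on a finite-dimensional subspace $W$ it is equivalent to $||\cdot||$; hence the subspace $\sigma$-topology on $W$ coincides with the norm topology, and the $\sigma$-continuity of $g$ forces $g|_{\overline{V\cap W}}$ to be norm-continuous. Writing $g=I_{d}-(I_{d}-g)$ and using $(I_{d}-g)(V)\subset W$ together with $V\cap W\subset W$, one sees that $g$ maps $V\cap W$ into $W$, and by continuity $g(\overline{V\cap W})\subset W$ because $W$ is closed. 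Finally, any zero of $g$ in $\overline{U}$ belongs to $g^{-1}(\{0\})\subset V$ and, since $g(u)=0$ gives $u=(I_{d}-g)(u)\in W$, also to $W$; thus every zero lies in the open set $V\cap W$ and none lies on $\partial(V\cap W)$. Therefore $g|_{V\cap W}$ is an admissible map for the Brouwer degree in $W$.

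Next I would establish independence of $W$. Suppose $W_{1}\subset W_{2}$ are finite-dimensional subspaces with $(I_{d}-g)(V)\subset W_{1}$. Applying the reduction property of the Brouwer degree to $T:=I_{d}-g$, which maps $\overline{V\cap W_{2}}$ into $W_{1}$, gives
\[
d_{B}(g|_{V\cap W_{2}}, V\cap W_{2}, 0)=d_{B}(g|_{V\cap W_{1}}, V\cap W_{1}, 0).
\]
For arbitrary subspaces $W_{1},W_{2}$ both containing $(I_{d}-g)(V)$, I would pass to $W_{3}:=W_{1}+W_{2}$ and apply this identity twice, obtaining equality of the degrees computed in $W_{1}$ and in $W_{2}$.

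It then remains to prove independence of $V$. Given two admissible data $(V_{1},W_{1})$ and $(V_{2},W_{2})$, I would first use the previous step to replace both $W_{1}$ and $W_{2}$ by the common space $W:=W_{1}+W_{2}$ without changing the respective degrees. Now $V_{1}$, $V_{2}$, and $V_{1}\cap V_{2}$ all contain $g^{-1}(\{0\})$ and satisfy $(I_{d}-g)(\,\cdot\,)\subset W$. Since the only zeros of $g$ in $\overline{V_{i}\cap W}$ form the set $g^{-1}(\{0\})$, which is contained in the smaller open set $(V_{1}\cap V_{2})\cap W$, the excision property of the Brouwer degree yields
\[
d_{B}(g|_{V_{i}\cap W}, V_{i}\cap W, 0)=d_{B}(g|_{(V_{1}\cap V_{2})\cap W}, (V_{1}\cap V_{2})\cap W, 0),\qquad i=1,2,
\]
and comparing the two cases shows that the degrees attached to $(V_{1},W_{1})$ and $(V_{2},W_{2})$ coincide. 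Combining the two reductions proves that $d(g,U,0)$ is well defined.

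I expect the main obstacle to lie in the topological bookkeeping of the first paragraph rather than in the degree-theoretic identities: one must ensure that on each finite-dimensional slice $W$ the $\sigma$-topology collapses to the norm topology, so that $g$ is genuinely continuous there and sends the relevant sets into $W$, and that the hypotheses of the reduction and excision properties (continuity on the closure, image contained in the smaller subspace, and absence of zeros on the boundary) are all verified on $\overline{V\cap W}$ and not merely on $V\cap W$.
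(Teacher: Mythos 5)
Your argument is correct and is essentially the standard proof: the paper itself states this proposition without proof, deferring to Kryszewski--Szulkin \cite{Kryszewski}, and the argument given there is exactly your combination of (i) the collapse of the $\sigma$-topology to the norm topology on finite-dimensional slices, so that $g|_{V\cap W}$ is a continuous compact perturbation of the identity within $W$ with no zeros on $\partial(V\cap W)$, (ii) the reduction property to remove the dependence on $W$, and (iii) excision (after passing to $W_1+W_2$) to remove the dependence on $V$. The points you flag as delicate --- that $g(\overline{V\cap W})\subset W$, that all zeros of $g$ lie in the open set $V\cap W$, and that the hypotheses of reduction and excision must be checked on closures --- are precisely the ones that need checking, and your treatment of them is adequate.
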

	\begin{definition}
		A map $h:[0,1] \times \overline{U}\rightarrow Y$ is an admissible homotopy if:
		\begin{itemize}
			\item [(a)] $0 \notin h([0,1] \times \partial U)$;
			\item [(b)] $h$ is continuous in $[0,1] \times (Y,\sigma)$ endowed with the norm $\|(t,u)\|_*=|t|+||u||_0$;
			\item [(c)] each point $(t,u) \in [0,1] \times U$ has neighborhood $\mathcal{N}_{(t,u)}$ with relation the norm $\|(\cdot, \cdot)\|_*$, such that
			$$\{v-h(s,v)\;;\; (s,v) \in \mathcal{N}_{(t,u)} \cap [0,1]\times U\} \subset W$$
			where $W$ is a subspace of $Y$ with $dim W<\infty.$
		\end{itemize}
	\end{definition}
	
	\begin{theorem}
		Let $g:\overline{U}\rightarrow Y$ be an admissible map. Then,
		\begin{itemize}
			\item [(a)]\textbf{(Normalization)}\\ If $v \in U$, then $d(id-v, U,0)=1$;
			\item [(b)] \textbf{(Existence)}\\ if $d(g,U,0) \neq 0$, then $0 \in g(U)$;
			\item [(c)] \textbf{(Homotopy invariance)}\\ If $h$ is admissible homotopy, then $d(h(t, \cdot), U, 0)$ is independent of $t \in [0,1]$.
		\end{itemize}
	\end{theorem}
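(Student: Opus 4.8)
The plan is to reduce each of the three assertions to the corresponding classical property of the Brouwer degree $d_B$, exploiting that $d(g,U,0)$ is by definition computed as a Brouwer degree on the finite-dimensional slice $V \cap W$. For the \textbf{normalization}, note that $g = I_d - v$ with $v \in U$ has zero set $g^{-1}(\{0\}) = \{v\}$ and $(I_d - g)(u) = v$ constant; I would therefore take $W = \mathrm{span}\{v\}$ and a small $\sigma$-neighborhood $V \subset U$ of $v$, so that $(I_d - g)(V) \subset W$ as required. Since $v \in V \cap W$ and $g|_{V \cap W}(u) = u - v$ is a translate of the identity vanishing at the interior point $v$, the normalization of the Brouwer degree yields $d_B(g|_{V\cap W}, V \cap W, 0) = 1 = d(g,U,0)$.

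For \textbf{existence} I would argue by contraposition: if $0 \notin g(U)$ then $g^{-1}(\{0\}) = \emptyset$, so the $\sigma$-compact set to be covered is empty, the admissible covering $V$ may be taken empty, and $V \cap W = \emptyset$. As the Brouwer degree over the empty set is $0$, we obtain $d(g,U,0) = 0$; hence $d(g,U,0) \neq 0$ forces $0 \in g(U)$.

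The \textbf{homotopy invariance} is where the real work lies. First I would check that $Z = \{(t,u) \in [0,1] \times \overline{U} : h(t,u) = 0\}$ is compact in the topology generated by $\|(\cdot,\cdot)\|_*$: condition (a) confines $Z$ to $[0,1] \times U$, while the continuity of $h$ and the boundedness of $U$ (through Proposition \ref{30}) supply the compactness. Using condition (c), each $(t_0,u_0) \in Z$ has a neighborhood $\mathcal{N}_{(t_0,u_0)}$ on which $\{v - h(s,v)\}$ lies in a finite-dimensional subspace $W_{(t_0,u_0)}$; extracting a finite subcover $\mathcal{N}_{(t_j,u_j)}$, $j=1,\dots,m$, and setting $W = \sum_{j=1}^{m} W_{(t_j,u_j)}$ produces one finite-dimensional subspace valid along the entire homotopy, with $V$ the union of the corresponding $U$-slices. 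Then for each fixed $t$ the map $h(t,\cdot)$ is admissible with this common pair $(V,W)$, so $d(h(t,\cdot),U,0) = d_B(h(t,\cdot)|_{V \cap W}, V \cap W, 0)$, and $h|_{[0,1] \times (V \cap W)}$ is a genuine finite-dimensional homotopy satisfying $0 \notin h([0,1] \times \partial(V \cap W))$, whence Brouwer homotopy invariance renders the degree independent of $t$. The main obstacle is precisely this construction of a \emph{single} subspace $W$ and neighborhood $V$ serving all $t$ simultaneously, which rests on the compactness of $Z$ and on verifying the boundary nonvanishing $0 \notin h([0,1] \times \partial(V \cap W))$ that legitimizes the application of Brouwer homotopy invariance on the slice.
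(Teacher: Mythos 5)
The paper does not prove this theorem at all: it is stated as a recalled result from Kryszewski--Szulkin \cite{Kryszewski}, so there is no in-paper argument to compare against. Your reconstruction is correct and is essentially the original proof from that reference: (a) and (b) reduce immediately to Brouwer normalization and to the convention that the degree over the empty slice is zero, and (c) is the genuine content, resting on the $\sigma$-compactness of $\overline{U}$ (bounded, $\sigma$-closed, hence weakly compact by reflexivity, with the $\sigma$- and weak topologies agreeing on bounded sets by Proposition \ref{30}), which gives compactness of the zero set $Z$ of $h$ and lets you extract a single pair $(V,W)$ valid for all $t$. Two fine points you gloss over but that are needed: first, the slice $V\cap W$ you produce for a fixed $t$ is generally not the one appearing in the definition of $d(h(t,\cdot),U,0)$, so you must invoke the preceding well-definedness proposition to identify the two Brouwer degrees; second, in checking $0\notin h([0,1]\times\partial_W(V\cap W))$ you should separate the case where a boundary point lies in $\partial U$ (excluded by condition (a) of admissibility) from the case where it lies in $U\setminus V$ (excluded because all zeros of $h(t,\cdot)$ project into $V$). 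Neither point is an obstacle, and with them your argument is complete.
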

	
From now on, $X$ is a Hilbert space with $X=Y \oplus Z$, where $Y$ is a separable closed subspace of $X$ and $Z=Y^{\perp}$. If $u \in X$,  $u^{+}$ and $u^{-}$ denote the orthogonal projections from $X$ in $Z$ and in $Y$, respectively. In $X$ let us define the norm
	\begin{eqnarray}
	|||\cdot|||: &X &\longrightarrow \mathbb{R} \nonumber \\
	&u& \longmapsto |||u|||=\max\left\{||u^{+}||, \sum_{k=1}^{\infty}\frac{1}{2^{k}}|(u^{-},e_{k})|\right\}, \nonumber
	\end{eqnarray}
	where $(e_{k})$ is a total orthonormal sequence in $Y$. The topology on $X$ generated by $|||\cdot|||$ will be denoted by $\tau$ and all topological notions related to it will include this symbol.
	
	Observe that for each $u \in X$,
	$$||u^{+}||\leq |||u||| \leq ||u||.$$
	\begin{lemma}\label{36}
	Let $(u_{n}) \subset X$ be a bounded sequence. Then, 
		$$u_{n}\stackrel{\tau}{\rightarrow} u\;\; \mbox{in}\;\; X\Leftrightarrow u_{n}^{-}\rightharpoonup u^{-}\;\; \mbox{and}\;\; u_{n}^{+}\rightarrow u^{+} \quad \mbox{in} \quad X.$$
	\end{lemma}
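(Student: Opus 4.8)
The plan is to exploit the fact that the norm $|||\cdot|||$ is, by its very definition, the maximum of two nonnegative quantities: the genuine norm $||u^{+}||$ of the $Z$-component and the quantity $||u^{-}||_0 := \sum_{k=1}^{\infty}\frac{1}{2^{k}}|(u^{-},e_{k})|$ measuring the $Y$-component in the $\sigma$-topology. Since the orthogonal projections $u \mapsto u^{+}$ and $u \mapsto u^{-}$ are linear, one has $(u_{n}-u)^{+} = u_{n}^{+} - u^{+}$ and $(u_{n}-u)^{-} = u_{n}^{-} - u^{-}$, so that
\[
|||u_{n} - u||| = \max\left\{\, ||u_{n}^{+} - u^{+}||,\ ||u_{n}^{-} - u^{-}||_0 \,\right\}.
\]
Because a maximum of two nonnegative numbers tends to $0$ if and only if each term does, the convergence $u_{n}\stackrel{\tau}{\to}u$ is equivalent to the conjunction of $||u_{n}^{+} - u^{+}|| \to 0$ and $||u_{n}^{-} - u^{-}||_0 \to 0$. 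This immediately decouples the statement into one assertion about the $Z$-component and one about the $Y$-component.

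The first condition, $||u_{n}^{+} - u^{+}|| \to 0$, is by definition exactly $u_{n}^{+} \to u^{+}$ strongly in $X$, which is one half of the right-hand side of the claimed equivalence, so nothing further is needed there. For the $Y$-component, I would first observe that since $(u_{n})$ is bounded in $X$ and the orthogonal projection onto $Y$ is norm-nonincreasing, the sequence $(u_{n}^{-})$ is bounded in $Y$. The condition $||u_{n}^{-} - u^{-}||_0 \to 0$ is precisely $u_{n}^{-}\stackrel{\sigma}{\to}u^{-}$ in $Y$, so Proposition \ref{30}, applied to the bounded sequence $(u_{n}^{-})$, yields
\[
||u_{n}^{-} - u^{-}||_0 \to 0 \iff u_{n}^{-} \rightharpoonup u^{-} \ \text{in}\ Y.
\]

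It remains only to reconcile weak convergence inside the closed subspace $Y$ with weak convergence in the ambient space $X$, and I regard this as the single genuine point requiring care. In a Hilbert space it is routine: for any $w \in X$, writing $w = w^{+} + w^{-}$ with $w^{-} \in Y$ and $w^{+} \in Z = Y^{\perp}$, and using $u_{n}^{-}, u^{-} \in Y \perp Z$, one gets $(u_{n}^{-}, w) = (u_{n}^{-}, w^{-})$ and $(u^{-}, w) = (u^{-}, w^{-})$; hence testing against all $w \in X$ reduces to testing against all $w^{-} \in Y$, so $u_{n}^{-} \rightharpoonup u^{-}$ in $Y$ if and only if $u_{n}^{-} \rightharpoonup u^{-}$ in $X$. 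Combining this with the two decoupled equivalences above gives exactly $u_{n}\stackrel{\tau}{\to}u \Leftrightarrow (u_{n}^{-}\rightharpoonup u^{-}$ and $u_{n}^{+}\to u^{+})$ in $X$, completing the argument. The only subtlety to keep in mind throughout is that Proposition \ref{30} requires boundedness, which is supplied by hypothesis and preserved under projection.
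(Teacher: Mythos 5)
Your proof is correct. The paper itself states Lemma \ref{36} without proof (it is recalled from Kryszewski--Szulkin), so there is no in-paper argument to compare against; your argument is the standard one that the paper implicitly relies on: decompose $|||u_n-u|||$ as the maximum of $\|u_n^+-u^+\|$ and $\|u_n^--u^-\|_0$, reduce the $Y$-component to Proposition \ref{30} via boundedness of the projections, and use orthogonality to identify weak convergence in $Y$ with weak convergence in $X$. You correctly isolate the only two points needing care (the boundedness hypothesis for Proposition \ref{30} and the subspace-versus-ambient-space weak convergence), and both are handled properly.
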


	\subsection{A special deformation lemma}
	
	The deformation lemma that will prove in this section, see Lemma \ref{26}, completes the study made in \cite{Kryszewski}, in the sense that a similar result was proved in that paper by supposing that the functional $I \in C^{1}(X,\mathbb{R})$.

	Hereafter, $I:X \to \mathbb{R}$ is a {\it locally Lipschitz functional } that is $\tau$-upper semicontinuous.  
		\begin{lemma}(see \cite[Lemma 3.3]{Chang1} ) \label{11}
	Let $u \in X$  and $\varepsilon>0$ such that $\lambda_{I}(u) \geq \varepsilon$. Then, there are $\overline{\varepsilon}>0$ and $\chi_{u} \in X$ with $||\chi_{u}||=1$ such that
	$$\left<l,\chi_{u}\right> \geq \frac{\overline{\varepsilon}}{2}\;,\; \forall\; l \in \partial I(u).$$
	\end{lemma}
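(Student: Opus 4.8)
The plan is to exploit the standard structure of the Clarke generalized gradient together with the Hilbert projection theorem. Recall that for a locally Lipschitz functional $I$ on the Banach space $X$, the set $\partial I(u)$ is nonempty, convex and weak-$*$ compact in $X^{*}$; in particular it is norm-closed and bounded. Since $X$ is a Hilbert space, the Riesz representation theorem identifies $X^{*}$ with $X$ and the duality pairing $\left<\cdot,\cdot\right>$ with the inner product $(\cdot,\cdot)$. Under this identification I would regard $K:=\partial I(u)$ as a nonempty, convex, closed, bounded subset of $X$, and the hypothesis $\lambda_{I}(u)\geq\varepsilon$ reads $\min_{z\in K}\|z\|=\lambda_{I}(u)\geq\varepsilon>0$, so in particular $0\notin K$.

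First I would produce the nearest point. Because $K$ is a nonempty closed convex subset of the Hilbert space $X$ not containing the origin, the Hilbert projection theorem yields a unique $z_{0}\in K$ realizing the distance from $0$ to $K$; thus $\|z_{0}\|=\lambda_{I}(u)\geq\varepsilon$. The variational inequality characterizing the projection, namely $(z-z_{0},\,0-z_{0})\leq 0$ for every $z\in K$, rearranges to
\begin{equation*}
(z,z_{0})\geq \|z_{0}\|^{2}\geq \varepsilon^{2},\qquad\forall\, z\in K.
\end{equation*}

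Next I would set $\chi_{u}:=z_{0}/\|z_{0}\|$, so that $\|\chi_{u}\|=1$. Translating back through the Riesz identification, every $l\in\partial I(u)$ corresponds to some $z\in K$ with $\left<l,v\right>=(z,v)$ for all $v\in X$, and therefore
\begin{equation*}
\left<l,\chi_{u}\right>=\frac{(z,z_{0})}{\|z_{0}\|}\geq\frac{\|z_{0}\|^{2}}{\|z_{0}\|}=\|z_{0}\|=\lambda_{I}(u)\geq\varepsilon.
\end{equation*}
Choosing $\overline{\varepsilon}:=2\varepsilon$ then gives $\left<l,\chi_{u}\right>\geq\varepsilon=\overline{\varepsilon}/2$ for every $l\in\partial I(u)$, which is the desired conclusion.

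I do not expect a genuine obstacle here: the whole argument is a separation of the origin from the convex weak-$*$ compact set $\partial I(u)$, carried out concretely via the metric projection. The only points demanding a little care are, first, invoking the correct structural properties of the generalized gradient (nonemptiness, convexity, and weak-$*$ compactness, the last guaranteeing that $K$ is closed so that the projection exists), and second, keeping the Riesz identification $X^{*}\cong X$ consistent so that the duality pairing appearing in the statement matches the inner product used in the projection inequality. An equivalent route, if one prefers to avoid the explicit nearest point, would be to apply the Hahn--Banach separation theorem to strictly separate $\{0\}$ from the weak-$*$ compact convex set $\partial I(u)$ and then normalize the separating direction; the projection argument is simply the quantitative version that pins down the constant as $\lambda_{I}(u)$.
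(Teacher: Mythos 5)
Your argument is correct: identifying $\partial I(u)$ with a nonempty, convex, bounded, norm-closed subset $K$ of the Hilbert space $X$ via Riesz, projecting the origin onto $K$ to get the minimal-norm element $z_{0}$ with $\|z_{0}\|=\lambda_{I}(u)\geq\varepsilon$, and reading off $(z,z_{0})\geq\|z_{0}\|^{2}$ from the projection's variational inequality gives exactly the stated conclusion with $\chi_{u}=z_{0}/\|z_{0}\|$ and $\overline{\varepsilon}=2\varepsilon$. The paper itself supplies no proof, deferring entirely to Chang's Lemma 3.3, and your projection/separation argument is precisely the standard one behind that citation (Chang's version, set in a reflexive Banach space, uses a Hahn--Banach separation of $0$ from the weak-$*$ compact convex set $\partial I(u)$, which you correctly note is the qualitative counterpart of your quantitative nearest-point computation).
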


In order to prove our next result, we will assume the following condition on $I$: \\

\noindent $(H)$:  If $(u_{n}) \subset I^{-1}([\alpha, \beta])$ is such that $u_{n}\stackrel{\tau}{\rightarrow} u_{0}$ in $X$, then there exists $M>0$ such that $\partial I(u_{n}) \subset B_{M}(0) \subset X^{*},\;\forall\; n \in \mathbb{N}$. In addition, if  $\xi_{n} \in \partial I(u_{n})$ with $\xi_{n} \stackrel{\ast}{\rightharpoonup} \xi_{0}$ in  $X^{*}$, we have $\xi_{0} \in \partial I(u_{0})$.

	\begin{theorem}\label{18}
Assume $(H)$ and let $\alpha < \beta $ and $\varepsilon>0$ such that
	$$
	\lambda_{I}(u)\geq \varepsilon\;,\; \forall\; u \in I^{-1}([\alpha, \beta]).
	$$ 
Then, for each $u_{0} \in I^{-1}([\alpha, \beta])$, there exists $\eta_{0}>0$ such that
		$$\left<\xi,\chi_{u_{0}}\right>> \frac{\overline{\varepsilon}}{3}\;,\; \forall\; \xi \in  \partial I(u)\;,\; u\in B_{\eta_{0}}(u_{0}) \cap I^{-1}([\alpha, \beta]),$$ 
		where $B_{\eta_{0}}(u_{0})=\left\{u \in X\;;\; |||u-u_{0}|||<\eta_{0}\right\}$ with  $\chi_{u_{0}}$ and $\overline{\varepsilon}>0$ given in Lemma \ref{11}.
	\end{theorem}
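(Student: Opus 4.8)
The plan is to argue by contradiction, exploiting condition $(H)$ to produce a weak-$*$ convergent subsequence of subgradients and then to pass to the limit when testing against the fixed direction $\chi_{u_0}$.

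First I would fix $u_0 \in I^{-1}([\alpha,\beta])$. Since $\lambda_{I}(u)\geq \varepsilon$ for every $u \in I^{-1}([\alpha, \beta])$, in particular $\lambda_{I}(u_0)\geq \varepsilon$, so Lemma \ref{11} applies at $u_0$ and furnishes $\overline{\varepsilon}>0$ and $\chi_{u_0}\in X$ with $||\chi_{u_0}||=1$ such that
$$\langle l,\chi_{u_0}\rangle \geq \frac{\overline{\varepsilon}}{2}\;,\; \forall\; l \in \partial I(u_0).$$
Suppose, for contradiction, that the conclusion fails. Then for every $n \in \mathbb{N}$ there exist $u_{n}\in B_{1/n}(u_0)\cap I^{-1}([\alpha,\beta])$ and $\xi_{n}\in \partial I(u_n)$ with
$$\langle \xi_{n},\chi_{u_0}\rangle \leq \frac{\overline{\varepsilon}}{3}.$$
Since $|||u_{n}-u_0|||<1/n \to 0$, by definition of the topology $\tau$ we have $u_{n}\stackrel{\tau}{\rightarrow} u_0$ in $X$.

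Next I would invoke condition $(H)$. Because $(u_{n})\subset I^{-1}([\alpha,\beta])$ and $u_{n}\stackrel{\tau}{\rightarrow} u_0$, the first clause of $(H)$ yields $M>0$ with $\partial I(u_n)\subset B_{M}(0)\subset X^{*}$ for all $n$; in particular $||\xi_{n}||_{*}\leq M$. As $X$ is a Hilbert space, $X^{*}$ is reflexive, so the bounded sequence $(\xi_{n})$ admits a subsequence, not relabeled, with $\xi_{n}\stackrel{\ast}{\rightharpoonup}\xi_{0}$ in $X^{*}$ for some $\xi_{0}\in X^{*}$. The second clause of $(H)$ then gives $\xi_{0}\in \partial I(u_0)$.

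Finally I would combine the two estimates. On the one hand, by Lemma \ref{11} applied at $u_0$ we have $\langle \xi_{0},\chi_{u_0}\rangle \geq \overline{\varepsilon}/2$. On the other hand, the weak-$*$ convergence $\xi_{n}\stackrel{\ast}{\rightharpoonup}\xi_{0}$ tested against the fixed vector $\chi_{u_0}$ gives $\langle \xi_{n},\chi_{u_0}\rangle \to \langle \xi_{0},\chi_{u_0}\rangle$, whence $\langle \xi_{0},\chi_{u_0}\rangle \leq \overline{\varepsilon}/3$. Together these force $\overline{\varepsilon}/2 \leq \overline{\varepsilon}/3$, contradicting $\overline{\varepsilon}>0$; hence the desired $\eta_{0}>0$ exists. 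I expect the main obstacle to be the legitimacy of the limiting argument rather than any computation: one must check that the $\tau$-convergence of $(u_{n})$ together with the uniform boundedness supplied by the first clause of $(H)$ does permit extracting a weak-$*$ limit of the subgradients, and that the graph-closedness clause of $(H)$ is exactly what is needed to identify $\xi_{0}$ as an element of $\partial I(u_0)$. Once these two points are secured, the remaining inequality is routine.
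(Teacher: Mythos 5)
Your proof is correct and follows essentially the same route as the paper: a contradiction argument producing a sequence $u_n\stackrel{\tau}{\to}u_0$ in $I^{-1}([\alpha,\beta])$ with $\langle\xi_n,\chi_{u_0}\rangle\leq\overline{\varepsilon}/3$, then condition $(H)$ to extract a weak-$*$ limit $\xi_0\in\partial I(u_0)$ and contradict Lemma \ref{11}. You merely make explicit the subsequence extraction from the uniform bound, which the paper leaves implicit.
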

	\begin{proof}
		
	Arguing by contradiction, assume that there exists $(u_{n}) \subset I^{-1}([\alpha, \beta])$ with $u_{n}  \stackrel{\tau}{\rightarrow} u_{0}$ in $X$ and 
		\begin{equation}\label{9}
		\left<\xi_{n},\chi_{0}\right> \leq  \frac{\overline{\varepsilon}}{3}\;, \forall\; n \in \mathbb{N},
		\end{equation}
		where $\xi_{n} \in \partial I(u_{n})$. By condition $(H)$, going to a subsequence if necessary, there is $\xi_{0}  \in \partial I(u_{0})$ such that 
		\begin{equation}\label{10}
		\xi_{n} \stackrel{\ast}{\rightharpoonup} \xi_{0}\;\; \mbox{in}\;\; X^{*}.
		\end{equation}
		Therefore, from (\ref{9}) and (\ref{10}), 
		\begin{equation*}
		\left<\xi_{0},\chi_{0}\right> \leq  \frac{\overline{\varepsilon}}{3},
		\end{equation*}
   contrary to Lemma \ref{11}.
	\end{proof}
	
	\begin{lemma}\label{20}
		Under the assumptions of Theorem \ref{18}, there exists a $\tau$-open neighborhood $V$ of $I^{\beta}=\{u \in X\,:\, I(u) \leq \beta\}$ and a vector field $P:V\rightarrow X$ satisfying:
	\end{lemma}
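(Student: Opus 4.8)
The plan is to construct $P$ by patching together, through a $\tau$-locally Lipschitz partition of unity, the local descent directions furnished by Theorem \ref{18}. First I would assemble the cover. Since $I$ is $\tau$-upper semicontinuous, the strict sublevel set $A:=\{u\in X : I(u)<\alpha\}$ is $\tau$-open. By Theorem \ref{18}, to each $u_0\in I^{-1}([\alpha,\beta])$ there correspond $\eta_{u_0}>0$ and a unit vector $\chi_{u_0}$ such that $\langle\xi,\chi_{u_0}\rangle>\overline{\varepsilon}/3$ for every $\xi\in\partial I(u)$ and every $u\in B_{\eta_{u_0}}(u_0)\cap I^{-1}([\alpha,\beta])$, where the balls are taken in $|||\cdot|||$. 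The family $\{A\}\cup\{B_{\eta_{u_0}}(u_0):u_0\in I^{-1}([\alpha,\beta])\}$ is then a $\tau$-open cover of $I^{\beta}=A\cup I^{-1}([\alpha,\beta])$, and I take $V$ to be its union, a $\tau$-open neighborhood of $I^{\beta}$.

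Next I would exploit metrizability of $\tau$. The map $|||\cdot|||$ is an honest norm on $X$ (positive-definiteness uses that $(e_k)$ is total), so $(X,\tau)$ is a metric space and hence paracompact. Consequently the cover above admits a $\tau$-locally finite partition of unity $\{\pi_j\}_j$, which on a metric space can be chosen $\tau$-locally Lipschitz; to each $\pi_j$ I attach the direction $\chi^{(j)}:=0$ if $\operatorname{supp}\pi_j\subset A$, and $\chi^{(j)}:=\chi_{u_i}$ if $\operatorname{supp}\pi_j\subset B_{\eta_{u_i}}(u_i)$. I then set $P(u):=\sum_j \pi_j(u)\,\chi^{(j)}$ on $V$.

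It remains to read off the required properties. Boundedness is immediate: $\|P(u)\|\leq\sum_j\pi_j(u)\|\chi^{(j)}\|\leq 1$. For regularity, $|||\cdot|||\leq\|\cdot\|$ shows that every $\tau$-locally Lipschitz $\pi_j$ is also $\|\cdot\|$-locally Lipschitz; since the partition is $\tau$-locally finite, near any point only finitely many terms survive, so $P$ is locally Lipschitz in both norms and takes values in the finite-dimensional subspace spanned by the finitely many active $\chi^{(j)}$. This local finite-dimensionality is exactly what makes the flow of $-P$ admissible for the degree theory of Section 5. Finally, for the uniform descent estimate, if $u\in V\cap I^{-1}([\alpha,\beta])$ then $I(u)\geq\alpha$ forces every $A$-subordinate $\pi_j$ to vanish at $u$, so the surviving weights (with $\chi^{(j)}=\chi_{u_i}$) sum to $1$; and whenever $\pi_j(u)>0$ we have $u\in B_{\eta_{u_i}}(u_i)\cap I^{-1}([\alpha,\beta])$, whence Theorem \ref{18} gives $\langle\xi,\chi^{(j)}\rangle>\overline{\varepsilon}/3$ for all $\xi\in\partial I(u)$. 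Taking the convex combination yields $\langle\xi,P(u)\rangle>\overline{\varepsilon}/3$ for every $\xi\in\partial I(u)$ and $u\in V\cap I^{-1}([\alpha,\beta])$.

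The delicate point is reconciling three competing demands at once: $P$ must be $\tau$-continuous (so that the induced deformation is $\tau$-continuous and compatible with the degree), it must take values locally in a finite-dimensional subspace (admissibility condition (c) of the degree), and yet it must be a \emph{uniform} descent direction against the entire generalized gradient $\partial I(u)$, which is set-valued rather than a single derivative. The resolution is that Theorem \ref{18} has already upgraded the pointwise bound of Lemma \ref{11} into a $\tau$-open, uniform bound suited to patching, while the norm $|||\cdot|||$ simultaneously supplies metrizability (for the partition of unity) and the inequality $|||\cdot|||\leq\|\cdot\|$ (to transfer $\tau$-Lipschitz regularity into $\|\cdot\|$-Lipschitz regularity, needed for the ODE flow). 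The one bookkeeping check I would make careful is that the uniform bound $M$ on $\partial I$ from hypothesis $(H)$ together with the $\tau$-local finiteness keeps all estimates genuinely locally uniform; no compactness beyond $(H)$ is required at this stage.
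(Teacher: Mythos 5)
Your construction coincides with the paper's own proof: the same $\tau$-open cover by $\{I<\alpha\}$ and the balls $B_{\eta_{u_0}}(u_0)$ from Theorem \ref{18}, the same appeal to metrizability of $(I^{\beta},\tau)$ to obtain a $\tau$-locally finite, $\tau$-Lipschitz partition of unity, and the same definition $P=\sum_j \pi_j\,\chi^{(j)}$ with $\chi^{(j)}=0$ on the sublevel piece, yielding properties (a)--(d) exactly as in the paper. The argument is correct and takes essentially the same route.
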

	\begin{itemize}
		\item [(a)] $P$ is locally Lipschitz continuous and  $\tau$-locally Lipschitz continuous,
		\item [(b)] each point $u \in V$ has a $\tau$-neighborhood $V_{u}$ such that $P(V_{u})$ is contained in a finite-dimensional subspace of $X$,
		\item [(c)] $m=\displaystyle \sup_{u \in V} ||P(u)|| \leq 1$ and $\left<\xi,P(u)\right>\geq 0,\;\forall\; \xi\in \partial I(u)$ and $u \in V$,
		\item [(d)] for each $u \in I^{-1}([\alpha, \beta])$,  
		$$\left<\xi,P(u)\right>> \frac{\overline{\varepsilon}}{3},\;\forall\; \xi \in \partial I(u)$$
		for some $\overline{\varepsilon}>0$.
	\end{itemize}
	
	\begin{proof}
		
		For each $u_{0} \in I^{-1}([\alpha, \beta])$, by  Theorem \ref{18}, there exists $\eta_{0}>0$ such that
		\begin{equation}\label{16}
		\left<\xi,\chi_{u_{0}}\right>\geq \frac{\overline{\varepsilon}}{3}\;,\; \forall\; \xi \in  \partial I(u)\;,\; u\in B_{\eta_{0}}(u_{0}).
		\end{equation}
		Since $I$ is $\tau$-upper semicontimuous, 
		$$
		\tilde{\mathcal{N}}=I^{-1}((-\infty, \alpha))
		$$ is $\tau$-open in $X$, and 
		$$
		\mathcal{N}=\left\{B_{\eta_{u}}(u)\;,\; u \in I^{-1}([\alpha, \beta]) \right\} \cup \left\{\tilde{\mathcal{N}}\right\}
		$$
		is a $\tau$-neighborhood for $I^{\beta}$. In addition $(I^{\beta}, \tau)$ is a metric space, then there exists a  $\tau$-locally finite $\tau$-open covering  $\mathcal{V}=\left\{\mathcal{V}_{i}\;:\; i \in \mathcal{J}\right\}$ of $I^{\beta}$ (see \cite{Ryszard}) more fine than $\mathcal{N}$. We define the $\tau$-open neighborhood of $I^{\beta}$ by 
		$$
		V= \bigcup_{i \in \mathcal{J}} \mathcal{V}_{i}
		$$
		and set $\left\{\gamma_{i}\;:\; i \in \mathcal{J}\right\}$ as being  a $\tau$-Lipschitz continuous partition of unity subordinated to $\mathcal{M}$. Employing the notations above, we set the vector field $P:V \to X$ by 
	$$
	 P(u)=\sum_{i \in \mathcal{J}} \gamma_{i}(u)w_{i}, 
	$$
		where:
		\begin{itemize}
			\item If $\mathcal{V}_{i} \subseteq B_{\eta_{u_i}}(u_i)$, we choose $w_{i}=\chi_{u_i}$ ($\chi_{u_i}$ is given in Lemma \ref{11}).
			\item If $\mathcal{V}_{i}\subseteq \tilde{\mathcal{N}}$, we choose $w_{i}=0$. 
		\end{itemize}
		\begin{itemize}
			\item [(a)] A straightforward  computation  shows $P$ is $\tau$-Locally Lipschitz and Locally Lipschitz.
			\end{itemize}
			\begin{itemize}
			\item [(b)] For each $u \in V$ there exists a $\tau$-open neighborhood $\mathcal{N}_{u}$ of $u$ such that
			$$\mathcal{N}_{u} \cap \mathcal{V}_{i} \neq \emptyset\;\;\mbox{for}\;\; i \in \{1,2,...,k\}$$
		and
			$$P(\mathcal{N}_{u})= \left\{\sum_{i=1}^{k}\gamma_{i}(v) w_{i}\;;\; v \in \mathcal{N}_{u}\right\} \subset span\left\{w_{1}, w_{2},...,w_{k}\right\}:=W$$
			where $dim W<\infty$.
			\end{itemize}
			\begin{itemize}
			\item [(c)] Given $u \in V$
			\begin{eqnarray}
			||P(u)|| &\leq & \sum_{i \in \mathcal{J}} \gamma_{i}(u) ||w_{i}|| \nonumber \\
			&\leq& \sum_{i \in \mathcal{J}} \gamma_{i}(u)=1, \nonumber
			\end{eqnarray}
			implying that $m=\sup_{u \in V} ||P(u)|| \leq 1$. Let $u \in V$ and $\xi \in \partial I(u)$, then by (\ref{16})
			$$(\xi, P(u))= \sum_{i \in \mathcal{J}} \gamma_{i}(u)(\xi, w_{i}) \geq 0.$$
		\end{itemize}
			\begin{itemize}
			\item  [(d)] For each $u \in I^{-1}([\alpha, \beta])$,  $u \in suppt(\gamma_{i}), i \in \mathcal{J}_u$ ($\mathcal{J}_u$ is finite). Since $(suppt(\gamma_{i}))_{i \in \mathcal{J}}$ is subordinated to $\mathcal{M}$, there exists $\mathcal{V}_{i} \in \mathcal{V}$ such that
			\begin{equation}\label{17}
			u \in suppt(\gamma_{i}) \subseteq \mathcal{V}_{i}\;,\;\forall\; i \in \mathcal{J}_u.
			\end{equation}
		For each $i \in \mathcal{J}_u$, there exists $v_{i} \in I^{-1}([\alpha, \beta])$ such that
		$$
		\mathcal{V}_{i} \subseteq B_{\eta_{v_{i}}}(v_{i})\;,\; \eta_{v_{i}}>0.
		$$
		From this, $w_{i}=\chi_{v_i}$ and by (\ref{16}),
	$$
			(\xi, P(u))= \sum_{i \in \mathcal{J}_u} \gamma_{i}(u)(\xi, \chi_{u_i}) >\frac{\overline{\varepsilon}}{3}\sum_{i \in \mathcal{J}_u} \gamma_{i}(u) =\frac{\overline{\varepsilon}}{3}. \nonumber
	 $$	
			\end{itemize}
	\end{proof}
	
In the sequel, we will consider the following Cauchy problem
	\begin{eqnarray} \label{001}
	\left\{\begin{array}{c}
	\frac{d}{dt}\eta(t,u) =- P(\eta(t,u)) \\
	\eta(0,u)=u \in I^{\beta}. 
	\end{array}
	\right.
	\end{eqnarray}
	The classical theory of ordinary differential equations asserts (\ref{001}) has a unique solution $\eta(u, \cdot)$ that exists for all $t\geq 0$ with $\eta(t,u) \in V$, because $P$ is a bounded vector field, see Lemma \ref{20}.
	\begin{lemma}\label{53}
		For each $u \in I^{\beta}$ consider a map
		\begin{eqnarray}
		\Gamma_{u}: &\mathbb{R}& \longrightarrow \mathbb{R} \nonumber \\
		&t&\longmapsto 	\Gamma_{u}(t)=I(\eta(t,u)). \nonumber
		\end{eqnarray}
		Then, $	\Gamma_{u} \in Lip_{loc}(\mathbb{R}, \mathbb{R})$ and $t\mapsto 	\Gamma_{u}(t)$ is not increasing.
	\end{lemma}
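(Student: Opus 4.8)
The plan is to combine the $C^{1}$-regularity of the flow $\eta(\cdot,u)$ with the chain-rule inequality of Lemma \ref{19} and the nonnegativity condition in part (c) of Lemma \ref{20}. First I would establish that $\Gamma_{u}\in Lip_{loc}(\mathbb{R},\mathbb{R})$. Since the vector field $P$ is locally Lipschitz continuous by Lemma \ref{20}(a), the classical Cauchy--Lipschitz theory guarantees that the solution $t\mapsto \eta(t,u)$ of (\ref{001}) is of class $C^{1}$ on its interval of existence, with $\frac{d}{dt}\eta(t,u)=-P(\eta(t,u))$ and $\eta(t,u)\in V$. Moreover, part (c) of Lemma \ref{20} gives $\|\frac{d}{dt}\eta(t,u)\|=\|P(\eta(t,u))\|\leq 1$, so on any compact interval the curve $\eta(\cdot,u)$ is Lipschitz in $t$ and its image lies in a bounded subset of $X$, on which $I$ is Lipschitz with some constant $K$. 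Hence
$$
|\Gamma_{u}(t)-\Gamma_{u}(s)|=|I(\eta(t,u))-I(\eta(s,u))|\leq K\,\|\eta(t,u)-\eta(s,u)\|\leq K\,|t-s|,
$$
which yields the local Lipschitz property.

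For the monotonicity I would fix a compact subinterval where $\eta(\cdot,u)$ is defined and, after an affine reparametrization to $[0,1]$, apply Lemma \ref{19} with $\phi=\eta(\cdot,u)\in C^{1}$ and the locally Lipschitz functional $I$. This gives, for almost every $t$,
$$
\Gamma_{u}'(t)\leq \max\{\langle \xi,\tfrac{d}{dt}\eta(t,u)\rangle : \xi\in \partial I(\eta(t,u))\}=\max\{-\langle \xi,P(\eta(t,u))\rangle : \xi\in \partial I(\eta(t,u))\}.
$$
Because $\eta(t,u)\in V$ for every admissible $t$, part (c) of Lemma \ref{20} ensures $\langle \xi,P(\eta(t,u))\rangle\geq 0$ for all $\xi\in \partial I(\eta(t,u))$, so the right-hand side is $\leq 0$ and therefore $\Gamma_{u}'(t)\leq 0$ almost everywhere. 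Since $\Gamma_{u}$ is locally Lipschitz, it is absolutely continuous on compact intervals, and for $s<t$ the fundamental theorem of calculus gives $\Gamma_{u}(t)-\Gamma_{u}(s)=\int_{s}^{t}\Gamma_{u}'(\tau)\,d\tau\leq 0$, proving that $\Gamma_{u}$ is not increasing.

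The main obstacle I anticipate is the careful justification that Lemma \ref{19}, which is stated for $C^{1}$ curves, genuinely applies here: this rests entirely on the $C^{1}$-regularity of $\eta(\cdot,u)$, which in turn relies on the local Lipschitz continuity of $P$ (Lemma \ref{20}(a)) and on the flow remaining inside $V$, as recorded after (\ref{001}), so that the sign condition (c) is available along the whole trajectory. Once this regularity and invariance are secured, the nonnegativity of $\langle \xi,P(u)\rangle$ does the remaining work and the monotonicity follows immediately.
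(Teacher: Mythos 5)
Your proposal is correct and follows essentially the same route as the paper: the local Lipschitz bound comes from $\|P\|\le 1$ combined with the Lipschitz constant of $I$ near each point of the trajectory, and the monotonicity from Lemma \ref{19} together with the sign condition $\left<\xi,P(u)\right>\ge 0$ of Lemma \ref{20}(c). The only caveat is that in the first part you should take the Lipschitz constant of $I$ on a small ball around each $\eta(t_0,u)$ after shrinking the time interval (as the paper does), rather than on a ``bounded subset'' of $X$ --- local Lipschitzness does not yield a uniform constant on bounded sets in infinite dimensions --- but this is a cosmetic fix, and your explicit use of absolute continuity to pass from $\Gamma_u'\le 0$ a.e.\ to monotonicity only makes explicit a step the paper leaves implicit.
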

	\begin{proof}
		
		Since $I \in Lip_{loc}(X, \mathbb{R})$, for each $u \in I^{\beta}$ and $t \in \mathbb{R}$, there exists $\varepsilon>0$ such that
		\begin{eqnarray}
		|I (\eta(s_{1},u))-I(\eta(s_{2},u))| \leq K(\eta(t,u))\; ||\eta( s_{1},u)- \eta(s_{2},u)||,\;\forall\;\eta( s_{1},u), \eta(s_{2},u) \in B_{\varepsilon}(\eta(t,u)). \nonumber
		\end{eqnarray}
		Considering $\Theta =\max\{s_{1}, s_{2}\}$ and $\kappa= \min \{s_{1}, s_{2}\}$, we obtain
		\begin{eqnarray}
		|I(\eta(s_{1},u))-I(\eta(s_{2},u))| &\leq& K(\eta(t,u)) \left\| \int_{\kappa}^{\Theta} -P (\eta(s,u)) ds \right\| \nonumber \\
		&\leq &  K(\eta(t,u))  \int_{\kappa}^{\Theta} ||P (\eta(s,u))|| ds. \nonumber
		\end{eqnarray}
As $\|P(u)\| \leq 1$ for all $u \in V$, 
		$$
		|I(\eta(s_{1},u))-I(\eta(s_{2},u))| \leq  K(\eta(t,u)) |s_{1}-s_{2}|,\;\forall\; s_{1}, s_{2} \in  (t-\delta,t+\delta)
		$$
	for $\delta>0$ small enough. This shows $\Gamma_{u} \in Lip_{loc}(\mathbb{R}, \mathbb{R})$.
		
		We will proof that $\Gamma_{u}$ is not increasing. By Lemma \ref{19}, $\Gamma_{u}$ is differentiable a.e in $\mathbb{R}$ and
		$$
		\Gamma_{u}'(t) \leq \max\left\{\left<\xi, \frac{d}{dt} \eta(t,u)\right>\;;\; \xi \in \partial I(\eta(t,u))\right\} \quad \mbox{a.e in} \quad \mathbb{R},
		$$
or equivalently
		$$
		\Gamma_{u}'(t) \leq - \min\left\{\left<\xi, P(\eta(t,u)) \right>\;;\; \xi \in \partial I(\eta(t,u))\right\} \quad \mbox{a.e in} \quad \mathbb{R}.
		$$
Thereby, by Lemma \ref{20},  
		\begin{equation}\label{21}
		\Gamma_{u}'(t) \leq - \frac{\overline{\varepsilon}}{3}\leq 0, \quad a.e \quad \mbox{in} \quad  \mathbb{R}, 
		\end{equation}
	from where it follows that desired result. 
	\end{proof}

	\begin{lemma}(Deformation lemma)\label{26}
		Under the assumptions of Theorem \ref{18}, the vector field $\eta: \mathbb{R}^{+}\times I^{\beta}\rightarrow X$ given in (\ref{001}) is well defined and satisfies the following properties:
	\end{lemma}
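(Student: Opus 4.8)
The plan is to read off every property of the flow $\eta$ from the four properties of the vector field $P$ proved in Lemma \ref{20}, together with the differential inequality (\ref{21}) of Lemma \ref{53}. I would first settle well-definedness and global existence. Since $P$ is locally Lipschitz (item (a) of Lemma \ref{20}) and bounded with $m=\sup_{u\in V}||P(u)||\leq 1$ (item (c)), the Cauchy problem (\ref{001}) admits, for each $u\in I^{\beta}$, a unique maximal solution, and the a priori bound $||\frac{d}{dt}\eta(t,u)||=||P(\eta(t,u))||\leq 1$ excludes finite-time blow-up, so $\eta(\cdot,u)$ is defined on all of $\mathbb{R}^{+}$ with $\eta(0,u)=u$. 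Because $t\mapsto I(\eta(t,u))$ is non-increasing (Lemma \ref{53}), we get $I(\eta(t,u))\leq I(u)\leq\beta$, hence $\eta(t,u)\in I^{\beta}\subset V$ for all $t\geq 0$; this shows the trajectory never leaves the domain $V$ of $P$, so the flow is genuinely well defined on $\mathbb{R}^{+}\times I^{\beta}$.

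Next I would establish the descent property $\eta(T,I^{\beta})\subset I^{\alpha}$ with $T:=3(\beta-\alpha)/\overline{\varepsilon}$. Fix $u\in I^{\beta}$. If $I(u)\leq\alpha$, monotonicity already keeps the whole trajectory in $I^{\alpha}$. Otherwise put $t_{1}=\inf\{t\geq 0 : I(\eta(t,u))\leq\alpha\}$; for $t<t_{1}$ the point $\eta(t,u)$ lies in $I^{-1}([\alpha,\beta])$, so item (d) of Lemma \ref{20} and (\ref{21}) give $\Gamma_{u}'(t)\leq-\overline{\varepsilon}/3$ a.e., and integrating the locally Lipschitz map $\Gamma_{u}$ yields
$$
I(\eta(t,u))\leq I(u)-\frac{\overline{\varepsilon}}{3}\,t\leq\beta-\frac{\overline{\varepsilon}}{3}\,t,\qquad 0\leq t<t_{1}.
$$
Evaluating at $t=T$ forces $t_{1}\leq T$, and then the continuity and monotonicity of $\Gamma_{u}$ give $I(\eta(T,u))\leq\alpha$, i.e.\ $\eta(T,u)\in I^{\alpha}$, as required.

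Finally I would verify the topological properties that make $\eta$ usable inside the degree theory of Section 5, namely the $\tau$-continuity of $(t,u)\mapsto\eta(t,u)$ and the fact that $\mathrm{id}-\eta(t,\cdot)$ has locally finite-dimensional range. The $\tau$-continuity would follow from the $\tau$-local Lipschitz continuity of $P$ (item (a) of Lemma \ref{20}) by the usual continuous-dependence-on-initial-data estimate, invoking Lemma \ref{36} to translate $\tau$-convergence of the bounded trajectories into weak convergence of the $Y$-components and norm convergence of the $Z$-components. For the range property I would use item (b) of Lemma \ref{20}: each $u\in V$ has a $\tau$-neighborhood $V_{u}$ with $P(V_{u})$ contained in a finite-dimensional subspace $W_{u}$; writing $u-\eta(t,u)=\int_{0}^{t}P(\eta(s,u))\,ds$ and covering the bounded trajectory on $[0,t]$ by finitely many such neighborhoods, the displacement lands in a finite-dimensional subspace, which is precisely the admissibility-type condition.

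I expect this last step to be the main obstacle. The descent estimate is a routine integration of (\ref{21}), and global existence is standard once $P$ is bounded, but the $\tau$-topological conclusions are delicate: the norm $|||\cdot|||$ generating $\tau$ is weaker than $||\cdot||$ and is only well behaved on bounded sets (Lemma \ref{36}), so one must check that the relevant trajectories stay in a fixed bounded set and that the finite-dimensional subspaces supplied by item (b) of Lemma \ref{20} can be patched consistently along nearby trajectories, so that a single $\tau$-neighborhood of each point is mapped by $\mathrm{id}-\eta(t,\cdot)$ into one finite-dimensional subspace.
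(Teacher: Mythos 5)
Your proposal is correct and follows the paper's own route: well-definedness and global existence come from the boundedness and local Lipschitz continuity of $P$ together with the monotonicity of $t\mapsto I(\eta(t,u))$ keeping the flow inside $V$, and the descent property $\eta(T,I^{\beta})\subset I^{\alpha}$ is obtained by integrating $\Gamma_u'(t)\leq -\overline{\varepsilon}/3$ with $T=3(\beta-\alpha)/\overline{\varepsilon}$ (your first-hitting-time formulation is equivalent to the paper's split into the two cases ``the trajectory drops below $\alpha$ at some $t_0\le T$'' versus ``it stays in $I^{-1}([\alpha,\beta])$ on all of $[0,T]$''). For properties (b) and (c) the paper does not argue at all but cites Kryszewski--Szulkin (equivalently Willem, Lemma 6.8); your sketch via $u-\eta(t,u)=\int_{0}^{t}P(\eta(s,u))\,ds$, the local finite-dimensionality of the range of $P$, and a Gronwall-type continuous-dependence estimate in the $\tau$-topology is precisely the argument carried out in those references, so you have if anything supplied more detail than the paper.
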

	\begin{itemize}
		\item [(a)] There exists $T>0$ such that 
		$$\eta(T, I^{\beta}) \subset I^{\alpha};$$
		\item [(b)] Each point $(t,u) \in [0,T] \times I^{\beta}$ has a $\tau$-neighborhood $N_{(t,u)}$ such that
		$$\left\{v-\eta (s,v) \in N_{(t,u)}\cap  ([0,T] \times I ^{\beta}) \right\} $$
		is contained in a finite-dimensional subspace of $X$;
		\item [(c)] $\eta$ is continuous in  $[0,+\infty) \times (X,\tau)$ endowed with the norm $\|(t,u)\|_{\bigstar}=|t|+|||u|||$.
	\end{itemize}
	\begin{proof}

		\begin{itemize}
			\item [(a)] Given $u \in I^{\beta}$ and choosing $T=3\;\left(\frac{\beta-\alpha}{\overline{\varepsilon}}\right)>0$, let us look at the following cases:
			\item (i) \, There is $t_{0} \in [0,T]$ such that $I(\eta(t_{0},u))<\alpha$. 
			
			Since $t\mapsto \Gamma_{u}(t)$ not increasing, then
			$$I(\eta(T,u)) \leq I(\eta(t_{0},u))<\alpha,$$
		proving that $\eta(T,u) \in I^{\alpha}$.
			\item (ii) \, $\eta(t, u) \in I^{-1}([\alpha, \beta]), \forall\; t \in [0,T]$. 
			\begin{claim}\label{07}
				For each $u \in I^{\beta}$, $I(u)- I(\eta(T,u)) \geq \frac{\overline{\varepsilon }\,T}{3}.$
			\end{claim}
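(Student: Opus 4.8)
The plan is to integrate the differential inequality (\ref{21}) over $[0,T]$, exploiting that in case (ii) the trajectory $t \mapsto \eta(t,u)$ stays inside $I^{-1}([\alpha,\beta])$ for every $t \in [0,T]$, which is exactly where the lower bound of Lemma \ref{20}(d) is available.

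First I would recall from Lemma \ref{53} that $\Gamma_{u} \in Lip_{loc}(\mathbb{R}, \mathbb{R})$. In particular $\Gamma_{u}$ is Lipschitz on the compact interval $[0,T]$, hence absolutely continuous there. Consequently $\Gamma_{u}$ is differentiable almost everywhere on $[0,T]$ and satisfies the fundamental theorem of calculus,
$$
\Gamma_{u}(T) - \Gamma_{u}(0) = \int_{0}^{T} \Gamma_{u}'(t)\, dt.
$$

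Next, since we are in case (ii), we have $\eta(t,u) \in I^{-1}([\alpha,\beta])$ for every $t \in [0,T]$. Thus Lemma \ref{20}(d) applies at each such point, and combining it with Lemma \ref{19} exactly as in the proof of Lemma \ref{53} yields the pointwise estimate (\ref{21}), that is,
$$
\Gamma_{u}'(t) \leq -\frac{\overline{\varepsilon}}{3} \quad \mbox{a.e. in} \quad [0,T].
$$
Integrating this bound and recalling that $\Gamma_{u}(0) = I(u)$ and $\Gamma_{u}(T) = I(\eta(T,u))$, we obtain
$$
I(\eta(T,u)) - I(u) \leq -\frac{\overline{\varepsilon}}{3}\, T,
$$
which is precisely $I(u) - I(\eta(T,u)) \geq \frac{\overline{\varepsilon}\,T}{3}$, the asserted inequality.

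The only delicate point is the passage from the almost-everywhere derivative bound to the integral inequality: this relies on the absolute continuity of $\Gamma_{u}$, guaranteed by its local Lipschitz character from Lemma \ref{53}. One must also be careful that the estimate (\ref{21}) is genuinely valid on the whole of $[0,T]$ here, rather than merely pointwise where it was originally derived; this is exactly what the standing hypothesis of case (ii) supplies, since it forces $\eta(t,u)$ to remain in the region $I^{-1}([\alpha,\beta])$ on which Lemma \ref{20}(d) holds. No further obstacle is expected, as the computation is a direct application of these earlier results.
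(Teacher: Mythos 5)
Your proposal is correct and follows exactly the paper's argument: both integrate the a.e.\ bound $\Gamma_{u}'(t)\leq -\overline{\varepsilon}/3$ from (\ref{21}) over $[0,T]$, using the (absolute) continuity of $\Gamma_u$ from Lemma \ref{53} to justify the fundamental theorem of calculus, with case (ii) guaranteeing the trajectory stays in $I^{-1}([\alpha,\beta])$. Your explicit remark on absolute continuity is a slightly more careful justification of a step the paper takes for granted, but the route is the same.
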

			In fact, recalling that
			$$
			\Gamma_{u}'(t) \leq -\frac{\overline{\varepsilon}}{3}\;,\; \eta(t,u) \in I^{-1}([\alpha, \beta]),\;\forall\; t \in [0,T], \quad \mbox{(see (\ref{21}))}
			$$
		we find 
		$$
			-T \;\frac{\overline{\varepsilon}}{3}= - \int_{0}^{T} \frac{\overline{\varepsilon}}{3} dt \geq \int_{0}^{T} 	\Gamma_{u}'(t) dt = 	\Gamma_{u}(T)-	\Gamma_{u}(0)=I(\eta(T,u))- I(u) 
		$$
as required. From this,  
	$$
			I(\eta(T,u)) \leq I(u)- 
			T\;\frac{\overline{\varepsilon}}{3} \leq  \beta - \frac{3}{\overline{\varepsilon}}(\beta-\alpha)\frac{\overline{\varepsilon}}{3}= \alpha.  
	$$
	
For $(b)$ and $(c)$, see \cite[Proposition 2.2]{Kryszewski} (or \cite[Lemma 6.8]{MW} ).
		\end{itemize}  
	\end{proof}

\subsection{Generalized linking theorem}
	
	Let $Y$ be a separable closed subspace of a Hilbert space $X$ and $Z=Y^{\perp}$. If $u \in X$, as in the previous section, $u^{+}$ and $u^{-}$ denote the orthogonal projections in $Z$ and $Y$, respectively. 

Given $\rho >r>0$ and $z \in Z$ with $||z||=1$, we set 
\begin{eqnarray}
&&\mathcal{M}=\left\{u=y+\lambda z\;; ||u||\leq \rho, \lambda \geq 0 \;\;\mbox{and}\;\; y \in Y\right\} \nonumber \\
&& \mathcal{M}_{0}=\left\{u=y+\lambda z\;;y \in Y, ||u||= \rho\;\;\mbox{and}\;\; \lambda \geq 0 \;\;\mbox{or}\;\;  ||u||\leq  \rho\;\;\mbox{and}\;\; \lambda=0 \right\} \nonumber \\
&& S=\left\{u \in Z\;; ||u||=r\right\}. \nonumber
\end{eqnarray} 
Assume $I \in Lip_{loc}(X, \mathbb{R})$ such that
\begin{equation}\label{5}
I \;\mbox{is}\;\;\tau-\mbox{upper semicontinuous}
\end{equation}
and
\begin{equation}\label{6}
b=\inf_{S}I>0= \sup_{\mathcal{M}_{0}}I\;,\; d=\sup_{\mathcal{M}} I < \infty
\end{equation}

\begin{theorem}\label{59}\textbf{}\\
	Assume $I\in Lip_{loc}(X, \mathbb{R})$, (\ref{5}), (\ref{6}) and $(H)$. Then, there is $c \in [b,d]$ and a sequence $(u_{n}) \subset X$ such that
	$$I(u_{n}) \rightarrow c\;\;\mbox{and}\;\; \lambda_{I}(u_{n})\rightarrow 0.$$
\end{theorem}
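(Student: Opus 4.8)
The plan is to realize a candidate value $c\in[b,d]$ as a minimax over a family of $\tau$-admissible deformations of $\mathcal{M}$, and then to extract the sequence $(u_{n})$ by contradiction: if no such sequence existed, the deformation lemma (Lemma \ref{26}) would let us push the minimax strictly below $c$, which is impossible. First I would fix $\rho,r,z$ as in (\ref{6}) and introduce the class
\[
\Gamma=\Bigl\{\,h\in C\bigl([0,1]\times\mathcal{M},(X,\tau)\bigr):h\text{ admissible},\ h(0,\cdot)=\mathrm{id}_{\mathcal{M}},\ I(h(s,u))\le 0 \text{ whenever } I(u)\le 0\Bigr\},
\]
where \emph{admissible} means $h$ enjoys the $\tau$-continuity and the local finite dimensionality of $\mathrm{id}-h$ exactly as in the definition of admissible homotopy of Section 5. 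Setting
\[
c=\inf_{h\in\Gamma}\ \sup_{u\in\mathcal{M}}I\bigl(h(1,u)\bigr),
\]
the bound $c\le d$ is immediate by testing with $h\equiv\mathrm{id}$ and using $d=\sup_{\mathcal{M}}I$ from (\ref{6}).

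The crucial point is the linking inequality $c\ge b>0$, and this is the step I expect to be the main obstacle. Here I would invoke the topological degree for admissible maps built in Section 5, together with its Normalization, Existence and Homotopy invariance properties. Concretely, for a fixed $h\in\Gamma$ I would consider the map that records the $Y$-component and the $Z$-radius of $h(1,\cdot)$, namely
\[
u\longmapsto\bigl((h(1,u))^{-},\ \|(h(1,u))^{+}\|-r\bigr),
\]
and show that it vanishes at some interior point of $\mathcal{M}$. Nonvanishing on $\mathcal{M}_{0}$ follows from $\rho>r$ together with the normalization $I(h(s,u))\le 0$ on $\{I\le 0\}\supset\mathcal{M}_{0}$, which forces $h$ to respect $\mathcal{M}_{0}$; homotopy invariance along $s\in[0,1]$ then reduces the degree to that of the identity, which is nonzero by Normalization. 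Hence $h(1,\mathcal{M})\cap S\neq\emptyset$, so $\sup_{u\in\mathcal{M}}I(h(1,u))\ge\inf_{S}I=b$, and therefore $c\ge b$, giving $c\in[b,d]$.

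Finally I would argue by contradiction. Suppose no sequence with $I(u_{n})\to c$ and $\lambda_{I}(u_{n})\to 0$ exists. Then there are $\varepsilon>0$ and $0<\delta<c$ such that $\lambda_{I}(u)\ge\varepsilon$ for every $u\in I^{-1}([\alpha,\beta])$, with $\alpha=c-\delta>0$ and $\beta=c+\delta$. This is exactly the hypothesis of Theorem \ref{18}, so, using (\ref{5}) and $(H)$, Lemma \ref{26} yields $T>0$ and a deformation $\eta$ with $\eta(T,I^{\beta})\subset I^{\alpha}$, enjoying the $\tau$-continuity of property (c) and the local finite dimensional structure of property (b). Choosing $h\in\Gamma$ with $\sup_{\mathcal{M}}I(h(1,\cdot))<\beta$ and concatenating it with the flow $\eta$ (following $h$ on $[0,\tfrac12]$ and then $s\mapsto\eta((2s-1)T,h(1,\cdot))$ on $[\tfrac12,1]$) produces a map $\tilde h$ with $\tilde h(1,u)=\eta(T,h(1,u))$. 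It is again admissible by (b)--(c) of Lemma \ref{26}, and it lies in $\Gamma$: indeed $\Gamma_{u}$ is nonincreasing (Lemma \ref{53}) and $\alpha>0=\sup_{\mathcal{M}_{0}}I$ guarantee that $I(\tilde h(s,u))\le 0$ is preserved on $\{I\le 0\}$. By construction $\sup_{u\in\mathcal{M}}I(\tilde h(1,u))\le\alpha=c-\delta<c$, contradicting the definition of $c$.

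The two delicate points are the degree computation for the detection map in the $\tau$-topology, which is the genuine heart of the linking, and the stability of admissibility under the composition with $\eta$; for the latter, properties (b)--(c) of Lemma \ref{26} are tailored precisely so that $\tilde h$ inherits $\tau$-continuity and the finite dimensional range of $\mathrm{id}-\tilde h$, while Lemma \ref{53} keeps the sublevel condition defining $\Gamma$ intact.
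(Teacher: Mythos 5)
Your argument is correct in outline, but it is organized differently from the paper's proof. The paper never introduces a minimax class: it negates the conclusion globally over the whole interval $[b,d]$, obtaining a uniform $\varepsilon>0$ with $\lambda_{I}(u)\ge\varepsilon$ on $I^{-1}([b-\varepsilon,d+\varepsilon])$, applies the deformation lemma (Lemma \ref{26}) with $\alpha=b-\varepsilon$ and $\beta=d+\varepsilon$ to get $\eta(T,\mathcal{M})\subset I^{b-\varepsilon}$, and then runs the degree argument exactly once, for the single homotopy $h(t,u)=\eta(t,u)^{-}+(\|\eta(t,u)^{+}\|-r)z$ on $[0,T]\times\mathcal{M}$; homotopy invariance and normalization give a point $u\in U$ with $\eta(T,u)\in S$, which contradicts $\inf_{S}I=b$. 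Your version instead defines $c$ as a minimax over a class $\Gamma$, so you must prove the intersection property $h(1,\mathcal{M})\cap S\neq\emptyset$ for \emph{every} $h\in\Gamma$ (not just for the flow) and must verify that $\Gamma$ is stable under concatenation with $\eta$; both verifications do go through, the first by the same degree computation the paper performs for its single homotopy, the second via Lemma \ref{53} and properties (b)--(c) of Lemma \ref{26}, as you indicate. What your route buys is an explicit minimax characterization of the level $c$ and a contradiction localized to a band $[c-\delta,c+\delta]$ around that single level; what the paper's route buys is economy, since only one admissible homotopy has to be checked and there is no class $\Gamma$ whose well-definedness and closure properties need attention. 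Both arguments rest on exactly the same two pillars, namely the degree for admissible maps of Section 5 and the deformation lemma, so I would regard your proposal as a correct but more elaborate packaging of the same idea.
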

\begin{proof}
	
	If the conclusion of the theorem is not true, then there is $\varepsilon>0$ such that
	\begin{equation}\label{25}
	\lambda_{I}(u) \geq \varepsilon\;,\; \forall\; u \in I^{-1}([b-\varepsilon, d+\varepsilon]).
	\end{equation}
	By (\ref{5}), (\ref{6}) and (\ref{25}), the hypotheses of Lemma \ref{26} are satisfied with $\alpha=b-\varepsilon$ and $\beta=d+\varepsilon$. 
	Since $\mathcal{M} \subset I^{d+\varepsilon}$, because $d=\sup_{\mathcal{M}} I$, it follows from Lemma \ref{26} that there exists $T>0$ such that
	\begin{equation}\label{32}
	\eta(T, \mathcal{M}) \subset I^{b-\varepsilon}. 
	\end{equation}
	Let $U$ be the interior of $\mathcal{M}$ in $E=Y\oplus \mathbb{R}z$ and define the homotopy
	\begin{eqnarray}
	h: &[0,T] \times \mathcal{M}& \longrightarrow E \nonumber \\
	&(t,u)& \longmapsto h(t,u)= \eta(t,u)^{-}+(||\eta(t,u)^{+}||-r)z, \nonumber
	\end{eqnarray}
	where $\rho>r>0$.	
	\begin{claim}
		$h$ is an admissible homotopy, that is,
		\begin{itemize}
			\item [(a)] $0 \notin h([0,T]\times \partial U)$;
			\item [(b)] $h$ is continuous in $[0,T] \times E$ endowed with the norm $\|(t,u,s)\|_{\blacklozenge}=|t|+||u||_0+|s|$.
			\item [(c)] for each point $(t,u) \in  [0,T] \times U$ there exists an open neighborhood $\mathcal{N}_{(t,u)}$ with relation to the norm $\|(\cdot,\cdot,\cdot)\|_{\blacklozenge}$ such that
			$$\{v-h(s,v)\;;\; (s,v) \in \mathcal{N}_{(t,u)} \cap [0,1]\times U\} \subset \tilde{W}$$
	for some finite-dimensional subspace $\tilde{W}$ of $X$.
		\end{itemize}
	\end{claim}
	\textbf{(a)} Note that
	\begin{eqnarray}
	h(t,u)=0 &\Leftrightarrow& \eta(t,u)^{-}+(||\eta(t,u)^{+}||-r)z=0 \nonumber \\
	&\Leftrightarrow& \eta(t,u)^{-}=(r-||\eta(t,u)^{+}||)z, \nonumber
	\end{eqnarray}
	that is, $r=||\eta(t,u)^{+}||$ and $\eta(t,u)^{-}=0$, from where it follows that $\eta(t,u) \in S$. Suppose 
	$$
	0 \in h([0,T]\times \partial U).
	$$
	Then, there exists $(t,u) \in [0,T] \times \partial U$ such that $h(t,u)=0$, that is, $\eta(t,u) \in S$. Owing the to (\ref{6}), 
	$$
	b>0> \sup_{\partial U} I \geq I(u) \geq I(\eta(t,u)) \geq b,
	$$
	which is absurd.
	
	\textbf{(b)} By definition of $h$ and Lemma \ref{26}, $h$ is continuous with relation to the norm $\|(\cdot, \cdot,\cdot)\|_{\blacklozenge}$.
	
	\textbf{(c)} By Lemma \ref{26}, for each $(t,u) \in  [0,T] \times U$, there exists an open neighborhood $\mathcal{N}_{(t,u)}$ with relation the norm $\|(\cdot, \cdot,\cdot)\|_{\blacklozenge}$ and a subspace $W$ of $E$ with $dim\; W< \infty$,  such that 
	$$\{v-\eta(s,v)\;;\; (s,v) \in \mathcal{N}_{(t,u)} \cap [0,1]\times U\} \subset W.$$
	Thereby, for each $(s,v) \in \mathcal{N}_{(t,u)} \cap ([0,1]\times U)$,
	\begin{eqnarray}
	v-h(s,v)&=& v- [\eta(t,u)^{-}+(||\eta(t,u)^{+}||-r)z] \nonumber \\
	&=& [v-\eta(s,v)]+[\eta(s,v)^{+}+(||\eta(s,v)^{+}||-r)z] \in W\oplus \mathbb{R}z:=\tilde{W}, \nonumber
	\end{eqnarray}
showing that $h$ is an admissible homotopy. By homotopy invariance
	$$d(h(T, \cdot), U, 0)= d(h(0, \cdot), U, 0),$$
	that is, 
	$$d(h(T, \cdot), U, 0)= d(h(Id-rz, U, 0))=1.$$
	Thus, there exists $u \in U$ such that $h(T,u)=0$. From this, $\eta(T,u) \in S$, and so, 
	$$b \leq I(\eta(T,u)).$$
	On the other hand, according to (\ref{32}), $I(\eta(T,u)) \leq b- \varepsilon$, which is absurd.
\end{proof}

\section{Proof of Theorem \ref{Teorema1}}	
	
In order to prove Theorem \ref{Teorema1}, our first step is to show that the energy function $\varphi$ associated with problem $(P)$, see (\ref{varphi1}) or (\ref{varphi2}), satisfies condition $(H)$.  	
\subsection{Condition check (H)}
	Let $(u_{n}) \subset \varphi^{-1}([\alpha, \beta])$ with $u_{n}\stackrel{\tau}{\rightarrow} u_{0}$ in $H^{1}(\mathbb{R}^{N})$. By definition of the norm that generates the $\tau$-topology,  we infer that $u_{n}^{+}\rightarrow u^{+}$ in $H^{1}(\mathbb{R}^{N})$, hence there is $R>0$ such that 
	\begin{equation}\label{34}
	||u_{n}^{+}|| \leq R\;,\; \forall\; n \in \mathbb{N}.
	\end{equation}
	\begin{claim}
		$(u_{n})$ is bounded in $H^{1}(\mathbb{R}^{N})$.
	\end{claim}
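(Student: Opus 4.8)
The plan is to exploit the indefinite quadratic structure (\ref{varphi2}) of $\varphi$ together with the sign of $F$. The hypothesis already supplies, through the $\tau$-convergence $u_{n}\stackrel{\tau}{\rightarrow} u_{0}$, the strong convergence $u_{n}^{+}\to u_{0}^{+}$ and hence the uniform bound (\ref{34}) on the positive parts. Since the spectral splitting is orthogonal, $\|u_{n}\|^{2}=\|u_{n}^{+}\|^{2}+\|u_{n}^{-}\|^{2}$, so everything reduces to producing a single uniform bound on the negative parts $\|u_{n}^{-}\|$. Thus the whole argument is an estimate for $u_{n}^{-}$.

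To obtain it, I would start from the membership $u_{n}\in\varphi^{-1}([\alpha,\beta])$, which in particular gives the lower bound $\varphi(u_{n})\geq\alpha$. Writing this out by means of (\ref{varphi2}) yields
$$\frac{1}{2}\|u_{n}^{-}\|^{2}+\int_{\mathbb{R}^{N}}F(x,u_{n})\leq \frac{1}{2}\|u_{n}^{+}\|^{2}-\alpha.$$
The crucial observation is that $(f_{4})$ forces $F$ to be nonnegative: the inequality $0<\theta F(x,t)$ holds for every $t\neq 0$, while trivially $F(x,0)=0$, so $F(x,t)\geq 0$ for all $t$ and consequently $\int_{\mathbb{R}^{N}}F(x,u_{n})\geq 0$. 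Dropping this nonnegative integral and invoking (\ref{34}) gives
$$\frac{1}{2}\|u_{n}^{-}\|^{2}\leq \frac{1}{2}\|u_{n}^{+}\|^{2}-\alpha\leq \frac{1}{2}R^{2}-\alpha,$$
a bound independent of $n$.

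Combining this with (\ref{34}) through $\|u_{n}\|^{2}=\|u_{n}^{+}\|^{2}+\|u_{n}^{-}\|^{2}\leq 2R^{2}-2\alpha$ closes the claim. In this argument there is essentially no genuine obstacle: the indefinite sign of the quadratic part, which is normally the source of difficulty for problems of type $(P)$, is here neutralized because the positive part $u_{n}^{+}$ is already controlled by the $\tau$-convergence and because $(f_{4})$ pins down the sign of $F$. The only point that requires a moment's care is the reading of $(f_{4})$ that secures $F\geq 0$ (the strict inequality being understood for $t\neq 0$); once that is in hand, the boundedness is a one-line consequence of the energy bound $\varphi(u_{n})\geq\alpha$.
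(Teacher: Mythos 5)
Your argument is correct and is essentially identical to the paper's: both use the uniform bound $\|u_{n}^{+}\|\leq R$ from the $\tau$-convergence, the lower energy bound $\varphi(u_{n})\geq\alpha$, and the nonnegativity of $F$ (which the paper attributes to $(f_{3})$ but, as you correctly note, really comes from $(f_{4})$) to bound $\|u_{n}^{-}\|^{2}\leq R^{2}-2\alpha$ and conclude via the orthogonal splitting. No substantive difference.
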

	For each $u \in H^{1}(\mathbb{R}^{N})$, 
	$$\varphi(u)= \frac{1}{2}||u^{+}||^{2}-\frac{1}{2}||u^{-}||^{2}- \int_{\mathbb{R}^{N}} F(x,u).$$
	By $(f_{3})$, (\ref{34}) and using the fact that $u_{n} \in \varphi^{-1}([\alpha, \beta])$ for all $ n \in \mathbb{N}$, we discover
	\begin{eqnarray}
	||u_{n}^{-}||^{2}&=& ||u_{n}^{+}||^{2}-2\varphi(u_{n})- 2\int_{\mathbb{R}^{N}} F(x,u_{n}) \nonumber \\
	&\leq & ||u_{n}^{+}||^{2}-2\varphi(u_{n}) \nonumber \\
	&\leq & R^{2}- 2 \alpha. \nonumber
	\end{eqnarray}
	Therefore,
	$$||u_{n}||^{2}= ||u_{n}^{+}||^{2}+||u_{n}^{-}||^{2}\leq R^{2}+ R^{2}- 2 \alpha=2(R^{2}-\alpha),$$
	showing the boundedness of $(u_n)$.
	\begin{claim}
		$\partial \varphi (u_{n})$ is uniformly bounded for all $n \in  \mathbb{N}$.
	\end{claim}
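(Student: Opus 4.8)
The plan is to exploit the natural splitting $\varphi = Q - \Psi$, where $Q(u) = \tfrac{1}{2}\|u^{+}\|^{2} - \tfrac{1}{2}\|u^{-}\|^{2}$ is a quadratic form belonging to $C^{1}(H^{1}(\mathbb{R}^{N}), \mathbb{R})$ and $\Psi(u) = \int_{\mathbb{R}^{N}} F(x,u)$ is locally Lipschitz. Applying Lemma \ref{35} to the pair $Q$ and $-\Psi$, together with the standard identity $\partial(-\Psi)(u) = -\partial\Psi(u)$, one obtains $\partial\varphi(u_{n}) = Q'(u_{n}) - \partial\Psi(u_{n})$, so that every $\xi \in \partial\varphi(u_{n})$ can be written as $\xi = Q'(u_{n}) - \rho_{n}$ with $\rho_{n} \in \partial\Psi(u_{n})$. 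It thus suffices to bound the two summands uniformly in $n$, and for both I would reduce the estimate to the boundedness of $(u_{n})$ in $H^{1}(\mathbb{R}^{N})$ already established in the previous claim.

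The first summand is immediate: since $Q'(u)v = \langle u^{+}, v\rangle - \langle u^{-}, v\rangle$, we have $\|Q'(u_{n})\|_{*} \leq \|u_{n}^{+}\| + \|u_{n}^{-}\|$, which is uniformly bounded. For the second summand I would invoke Theorem \ref{72}: each $\rho_{n} \in \partial\Psi(u_{n})$ is represented by a function $\tilde{\rho}_{n} \in L^{\tilde{\Phi}}(\mathbb{R}^{N})$ satisfying $\tilde{\rho}_{n}(x) \in [\underline{f}(x, u_{n}(x)), \overline{f}(x, u_{n}(x))]$ a.e., with $\langle \rho_{n}, v\rangle = \int_{\mathbb{R}^{N}} \tilde{\rho}_{n} v$ for every $v \in L^{\Phi}(\mathbb{R}^{N})$. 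Condition $(F_{*})$ then gives the pointwise bound $|\tilde{\rho}_{n}(x)| \leq C_{0}(|u_{n}(x)| + |u_{n}(x)|^{p-1})$, so that for $v \in H^{1}(\mathbb{R}^{N})$ Hölder's inequality (with exponents $2,2$ and $\tfrac{p}{p-1}, p$) yields $|\langle\rho_{n}, v\rangle| \leq C_{0}\bigl(\|u_{n}\|_{2}\|v\|_{2} + \|u_{n}\|_{p}^{p-1}\|v\|_{p}\bigr)$. Using the continuous embeddings $H^{1}(\mathbb{R}^{N}) \hookrightarrow L^{2}(\mathbb{R}^{N})$ and $H^{1}(\mathbb{R}^{N}) \hookrightarrow L^{p}(\mathbb{R}^{N})$, valid since $2 < p < 2^{*}$, and again the boundedness of $(u_{n})$, this produces $\|\rho_{n}\|_{*} \leq C(\|u_{n}\| + \|u_{n}\|^{p-1}) \leq M'$ with $M'$ independent of $n$.

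Combining the two estimates gives $\|\xi\|_{*} \leq \|Q'(u_{n})\|_{*} + \|\rho_{n}\|_{*} \leq M$ for a constant $M$ independent of $n$ and of the chosen $\xi \in \partial\varphi(u_{n})$, which is exactly the asserted uniform bound $\partial\varphi(u_{n}) \subset B_{M}(0) \subset (H^{1}(\mathbb{R}^{N}))^{*}$. The only genuinely delicate step is the passage from the pointwise constraint on the density $\tilde{\rho}_{n}$ (furnished by Theorem \ref{72} and $(F_{*})$) to a bound for $\rho_{n}$ in the dual norm of $(H^{1}(\mathbb{R}^{N}))^{*}$; everything else is a routine assembly of the sum rule, Hölder's inequality and the Sobolev embeddings, all of which ultimately rely on the uniform $H^{1}$-bound of the sequence.
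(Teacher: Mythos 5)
Your proposal is correct and follows essentially the same route as the paper: the decomposition $\xi = Q'(u_{n}) - \rho_{n}$ with $\rho_{n} \in \partial\Psi(u_{n})$, the pointwise bound from $(F_{*})$ on the representative of $\rho_{n}$, and then H\"older's inequality with the Sobolev embeddings applied to the uniformly bounded sequence $(u_{n})$. The only cosmetic difference is that you bound the two summands separately in the dual norm while the paper estimates $|\langle \xi_{n}, v\rangle|$ in one chain of inequalities; the content is identical.
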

	Given $\xi_{n} \in \partial \varphi (u_{n})$, we have 
	$$
	\xi_{n}= Q'(u_{n})- \rho_{n},\;\;\mbox{with}\;\;\rho_{n} \in \partial \Psi(u_{n}) \subset (L^{\Phi}(\mathbb{R}^{N}))^{*},
	$$
	where
	$$
	Q(u)=\int_{\mathbb{R}^N}(|\nabla u|^{2}+V(x)|u|^{2}), \quad u \in H^{1}(\mathbb{R}^N).
	$$
	For $v \in H^{1}(\mathbb{R}^{N})$, the continuous Sobolev embedding together with  H\"older inequality and $(F_{*})$ leads to 
	\begin{eqnarray}
	|\left<\xi_n, v\right>|&=&|\left<Q'(u_{n}), v\right>- \left<\rho_{n}, v\right>| \nonumber \\
	&=& |(u_{n}^{+},v)-(u_{n}^{-},v) - \left<\rho_{n}, v\right>| \nonumber \\
	&\leq & ||v||\;(||u_{n}^{+}||+||(u_{n}^{-})||)+|\left<\rho_{n}, v\right>|  \nonumber \\
	&\leq & K_{1}\;||v|| +\left\lvert \int_{\mathbb{R}^{N}} \rho_{n}v\right \lvert \nonumber \\
	&\leq & K_{1}\;||v|| +C_{0} \int_{\mathbb{R}^{N}}(|u_{n}|+|u_{n}|^{p-1})|v| \nonumber \\
	&\leq&K_{1}\;||v|| + C_{0}||u_{n}||_{2}||v||_{2}+C_{0}||u_{n}||_{p}^{p-1}||v||_{p} \nonumber\\
	&\leq& K ||v||, \quad \forall n \in \mathbb{N}, \nonumber
	\end{eqnarray}
from where it follows that $||\xi_{n}||_{*} \leq K$ for all $n \in \mathbb{N}$, as asserted.
	\begin{claim}
		If $(\xi_{n}) \subset \partial \varphi(u_{n})$ is such that $\xi_{n} \stackrel{*}{\rightharpoonup} \xi_{0}$ in $(H^{1}(\mathbb{R}^{N}))^{*}$, then $\xi_{0} \in \partial \varphi(u_{0})$.
	\end{claim}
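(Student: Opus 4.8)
The plan is to exploit the additive structure of $\varphi$ and reduce the claim to Proposition \ref{7}. Write $\varphi=\frac{1}{2}\|\cdot^{+}\|^{2}-\frac{1}{2}\|\cdot^{-}\|^{2}-\Psi$, where the quadratic part is a $C^{1}$ functional whose Fr\'echet derivative at $u$ is the bounded linear functional $Q'(u)\in (H^{1}(\mathbb{R}^{N}))^{*}$ given by $\langle Q'(u),v\rangle=(u^{+},v)-(u^{-},v)$. Since $\Psi\in Lip_{loc}$, Lemma \ref{35} yields
$$
\partial\varphi(u)=\{Q'(u)\}-\partial\Psi(u),\qquad u\in H^{1}(\mathbb{R}^{N}).
$$
Hence each $\xi_{n}\in\partial\varphi(u_{n})$ decomposes as $\xi_{n}=Q'(u_{n})-\rho_{n}$ with $\rho_{n}\in\partial\Psi(u_{n})\subset(L^{\Phi}(\mathbb{R}^{N}))^{*}$, viewed in $(H^{1}(\mathbb{R}^{N}))^{*}$ via the continuous embedding $H^{1}(\mathbb{R}^{N})\hookrightarrow L^{\Phi}(\mathbb{R}^{N})$. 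The goal is to identify the weak-$*$ limit $\xi_{0}$ with an element of $\{Q'(u_{0})\}-\partial\Psi(u_{0})$.

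Next I would extract the convergences encoded in $u_{n}\stackrel{\tau}{\rightarrow}u_{0}$. Since $(u_{n})$ is bounded (first Claim of this subsection), Lemma \ref{36} gives $u_{n}^{+}\rightarrow u_{0}^{+}$ strongly and $u_{n}^{-}\rightharpoonup u_{0}^{-}$ weakly in $H^{1}(\mathbb{R}^{N})$; adding these, $u_{n}\rightharpoonup u_{0}$ in $H^{1}(\mathbb{R}^{N})$. The delicate point is the linear term: for each fixed $v\in H^{1}(\mathbb{R}^{N})$,
$$
\langle Q'(u_{n}),v\rangle=(u_{n}^{+},v)-(u_{n}^{-},v)\longrightarrow (u_{0}^{+},v)-(u_{0}^{-},v)=\langle Q'(u_{0}),v\rangle,
$$
where the first summand converges by strong convergence of $u_{n}^{+}$ and the second by weak convergence of $u_{n}^{-}$. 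Thus $Q'(u_{n})\stackrel{\ast}{\rightharpoonup}Q'(u_{0})$ in $(H^{1}(\mathbb{R}^{N}))^{*}$. Combining this with the hypothesis $\xi_{n}\stackrel{\ast}{\rightharpoonup}\xi_{0}$ and the decomposition above, I obtain
$$
\rho_{n}=Q'(u_{n})-\xi_{n}\stackrel{\ast}{\rightharpoonup}Q'(u_{0})-\xi_{0}=:\rho_{0}\quad\mbox{in}\quad(H^{1}(\mathbb{R}^{N}))^{*}.
$$

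Finally, the hypotheses of Proposition \ref{7} are met: $u_{n}\rightharpoonup u_{0}$ in $H^{1}(\mathbb{R}^{N})$, $\rho_{n}\in\partial\Psi(u_{n})$, and $\rho_{n}\stackrel{\ast}{\rightharpoonup}\rho_{0}$ in $(H^{1}(\mathbb{R}^{N}))^{*}$; therefore $\rho_{0}\in\partial\Psi(u_{0})$. Returning to the splitting from Lemma \ref{35}, this gives $\xi_{0}=Q'(u_{0})-\rho_{0}\in\{Q'(u_{0})\}-\partial\Psi(u_{0})=\partial\varphi(u_{0})$, which is the assertion. The genuinely hard analytic work---passing to the limit inside the generalized gradient of $\Psi$ under only weak convergence of $u_{n}$---is precisely what Proposition \ref{7} supplies; the remaining effort here is the convergence bookkeeping, whose one subtle step is justifying the weak-$*$ convergence of the linear term $Q'(u_{n})$ while $u_{n}^{-}$ converges only weakly.
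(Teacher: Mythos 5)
Your proposal is correct and follows essentially the same route as the paper: decompose $\xi_{n}=Q'(u_{n})-\rho_{n}$ with $\rho_{n}\in\partial\Psi(u_{n})$, deduce $u_{n}\rightharpoonup u_{0}$ from boundedness plus $\tau$-convergence, pass to the limit in the linear term to get $\rho_{n}\stackrel{\ast}{\rightharpoonup}Q'(u_{0})-\xi_{0}$, and invoke Proposition \ref{7}. The only difference is that you spell out the weak-$*$ convergence of $Q'(u_{n})$ via the splitting $u_{n}^{+}\rightarrow u_{0}^{+}$, $u_{n}^{-}\rightharpoonup u_{0}^{-}$, a step the paper leaves implicit.
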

Indeed, since $\xi_{n} \in \partial \varphi(u_{n})$, there is $\rho_{n} \in  \partial\Psi(u_{n})$ such that
	$$\xi_{n}= Q'(u_{n})- \rho_{n}.$$
	As $(u_{n})$ is bounded in $H^{1}(\mathbb{R}^{N})$ and $u_{n}\stackrel{\tau}{\rightarrow} u_{0}$ in $H^{1}(\mathbb{R}^{N})$, we have 
	$$u_{n}\rightharpoonup u_{0}\;\;\mbox{in}\;\; H^{1}(\mathbb{R}^{N}).$$
	Thereby,   
	$$
	\left<\rho_{n}, v\right>= \left<Q'(u_{n}), v\right>- \left<\xi_{n}, v\right>\rightarrow \left<Q'(u_{0}), v\right>- \left<\xi_{0}, v\right>, \quad \forall v \in H^{1}(\mathbb{R}^{N})
	$$
	that is, $\rho_{n} \stackrel{*}{\rightharpoonup} Q'(u_{0})-\xi_{0}$ in  $ (H^{1}(\mathbb{R}^{N}))^{*}.$ From Proposition \ref{7},  
	$$
	Q'(u_{0})-\xi_{0} \in \partial \Psi(u_{0}),
	$$
	finishing the proof of condition $(H)$.  

\vspace{0.5 cm}

Now, we are going to prove the assumptions of the Theorem \ref{59}, which will be done in some lemmas. 	
\begin{lemma}(see \cite[Lemma 6.13]{MW})\label{55}
		Suppose $(V)$ and $(f_{1})-(f_{3})$. Then, there exists $r>0$ such that
		\begin{equation*}
		b:= \inf_{u \in Z, ||u||=r} \varphi(u)>0.
		\end{equation*} 
	\end{lemma}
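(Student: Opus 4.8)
The plan is to exploit the fact that on $Z$ the quadratic part of $\varphi$ is positive definite, and then to absorb the nonlinear term at a small radius. First I would observe that every $u \in Z$ satisfies $u^{-}=0$ and $u^{+}=u$, so that
$$
\varphi(u)=\frac{1}{2}||u||^{2}-\int_{\mathbb{R}^{N}}F(x,u),\quad u \in Z.
$$
The positivity of the quadratic term is exactly where the spectral hypothesis $(V)$ enters: restricting to the positive part $Z=E^{+}$ of the spectral decomposition removes the indefinite sign and leaves a coercive quadratic form, which is what makes a strictly positive infimum possible.

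Next I would control the integral term using the growth estimate already derived from $(f_{1})-(f_{3})$ in Section 4, namely
$$
|F(x,t)|\leq \frac{\varepsilon}{2}|t|^{2}+\frac{C_{\varepsilon}}{p}|t|^{p},\quad \forall\, t \in \mathbb{R},\ \forall\, x \in \mathbb{R}^{N}.
$$
Since the norm $||\cdot||$ coming from $(V)$ is equivalent to the usual $H^{1}$-norm, the continuous embeddings $H^{1}(\mathbb{R}^{N})\hookrightarrow L^{2}(\mathbb{R}^{N})$ and $H^{1}(\mathbb{R}^{N})\hookrightarrow L^{p}(\mathbb{R}^{N})$ (valid because $2<p<2^{*}$) furnish constants $c_{2},c_{p}>0$ with $||u||_{2}^{2}\leq c_{2}||u||^{2}$ and $||u||_{p}^{p}\leq c_{p}||u||^{p}$. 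Combining these, for every $u \in Z$,
$$
\varphi(u)\geq \Big(\frac{1}{2}-\frac{\varepsilon}{2}c_{2}\Big)||u||^{2}-\frac{C_{\varepsilon}}{p}c_{p}||u||^{p}.
$$

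Finally I would fix the constants in the correct order. Choosing $\varepsilon>0$ small enough that $\frac{1}{2}-\frac{\varepsilon}{2}c_{2}\geq \frac{1}{4}$, and writing $C=\frac{C_{\varepsilon}}{p}c_{p}$, on the sphere $||u||=r$ one gets
$$
\varphi(u)\geq \frac{1}{4}r^{2}-Cr^{p}=r^{2}\Big(\frac{1}{4}-Cr^{p-2}\Big).
$$
Because $p>2$, the factor $\frac{1}{4}-Cr^{p-2}$ tends to $\frac{1}{4}$ as $r\downarrow 0$, so for $r$ small enough (say $Cr^{p-2}\leq \frac{1}{8}$) it stays bounded below by $\frac{1}{8}$, yielding $\varphi(u)\geq \frac{1}{8}r^{2}>0$ uniformly on $||u||=r$ and hence $b\geq \frac{1}{8}r^{2}>0$. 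There is no genuine obstacle in this argument: the only point requiring care is the order of the choices — one must first select $\varepsilon$ (to guarantee a strictly positive coefficient on $||u||^{2}$) and only afterwards $r$ — while the two essential structural ingredients are the positivity of the quadratic form on $Z$ and the superquadratic exponent $p>2$, which renders the nonlinear contribution negligible near the origin.
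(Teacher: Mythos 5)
Your argument is correct and is precisely the standard proof of this lemma: the paper itself gives no proof but defers to \cite[Lemma 6.13]{MW}, whose argument is exactly the one you reproduce (quadratic positivity on $Z=E^{+}$, the estimate $|F(x,t)|\leq \frac{\varepsilon}{2}|t|^{2}+\frac{C_{\varepsilon}}{p}|t|^{p}$ from $(f_1)$--$(f_3)$, Sobolev embeddings, and the choice of $\varepsilon$ before $r$). Nothing is missing.
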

%

	\begin{lemma}\label{60}
		Assume $(f_{1})-(f_{4})$. Then, there are $z \in Z$ and $\rho>r>0$ with $||z||=r$ such that
		\begin{equation*}
		\max_{\mathcal{M}_{0}} \varphi =0 \quad \mbox{and} \quad d=\sup_{\mathcal{M}}\varphi<\infty,
		\end{equation*}
		where 
	$$
	\mathcal{M}=\left\{u=y+\lambda z\;; ||u||\leq \rho, \lambda \geq 0 \;\;\mbox{and}\;\; y \in Y\right\} 
	$$
	and
	$$
	\mathcal{M}_{0}=\left\{u=y+\lambda z\;;y \in Y, ||u||= \rho\;\;\mbox{and}\;\; \lambda \geq 0 \;\;\mbox{or}\;\;  ||u||\leq  \rho\;\;\mbox{and}\;\; \lambda=0 \right\}.
	$$ 
	\end{lemma}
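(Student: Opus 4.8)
The plan is to combine the sign structure of $\varphi$ on $Y=E^{-}$ and $Z=E^{+}$ with the superquadratic growth of $F$ forced by $(f_{4})$. First I would record two consequences of $(f_{4})$. Since $0<\theta F(x,t)\le tf(x,t)$ for $t\neq 0$, the primitive $F(x,\cdot)$ is nonnegative; moreover $F(x,t)=\int_{0}^{t}f(x,s)\,ds$ is absolutely continuous in $t$ and
$$\frac{d}{dt}\left(\frac{F(x,t)}{t^{\theta}}\right)=\frac{tf(x,t)-\theta F(x,t)}{t^{\theta+1}}\ge 0\quad\text{a.e.},$$
so $F(x,t)/|t|^{\theta}$ is nondecreasing in $|t|$. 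As $\theta>2$, this yields $F(x,t)/t^{2}\to+\infty$ as $|t|\to+\infty$ for a.e.\ $x\in\mathbb{R}^{N}$.

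Fix $z\in Z$ with $\|z\|=r$, where $r>0$ is the radius provided by Lemma~\ref{55}, and set $E=Y\oplus\mathbb{R}z$. For $u=y+\lambda z\in E$ one has $u^{-}=y$ and $u^{+}=\lambda z$, hence
$$\varphi(u)=\tfrac12\lambda^{2}\|z\|^{2}-\tfrac12\|y\|^{2}-\int_{\mathbb{R}^{N}}F(x,u).$$
The bound $d=\sup_{\mathcal{M}}\varphi<\infty$ is then immediate: on $\mathcal{M}$ we have $\|u\|\le\rho$ and $F\ge 0$, so $\varphi(u)\le\frac12\|u^{+}\|^{2}\le\frac12\rho^{2}$. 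On the flat face of $\mathcal{M}_{0}$ (where $\lambda=0$, i.e.\ $u=y\in Y$) we get $\varphi(y)=-\frac12\|y\|^{2}-\int_{\mathbb{R}^{N}}F(x,y)\le 0$, with equality at $y=0$; since $0\in\mathcal{M}_{0}$, this already gives $\max_{\mathcal{M}_{0}}\varphi\ge 0$.

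It remains to choose $\rho$ so that $\varphi\le 0$ on the spherical face $\{u=y+\lambda z:\lambda\ge0,\ \|u\|=\rho\}$; this is the core of the argument, which I would run by contradiction. If no such $\rho$ existed, there would be $u_{n}=y_{n}+\lambda_{n}z\in E$ with $\lambda_{n}\ge0$, $\|u_{n}\|\to+\infty$ and $\varphi(u_{n})>0$. From $\varphi(u_{n})>0$ and $F\ge0$ one obtains $\|u_{n}^{+}\|^{2}\ge\|u_{n}^{-}\|^{2}$, hence $\|u_{n}^{+}\|^{2}\ge\frac12\|u_{n}\|^{2}$, together with $\int_{\mathbb{R}^{N}}F(x,u_{n})\le\frac12\|u_{n}\|^{2}$. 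Writing $v_{n}=u_{n}/\|u_{n}\|=v_{n}^{-}+s_{n}z$ with $s_{n}\ge0$, we have $\|v_{n}^{+}\|^{2}\ge\frac12$; since $v_{n}^{+}=s_{n}z$ lies in the one-dimensional space $\mathbb{R}z$, along a subsequence $v_{n}^{+}\to sz$ strongly with $s>0$, while $v_{n}^{-}\rightharpoonup v^{-}$ in $Y$. Thus $v_{n}\rightharpoonup v:=sz+v^{-}$ in $H^{1}(\mathbb{R}^{N})$, and $v\neq0$ because its $Z$-component $sz$ is nonzero.

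The hard part is to convert this weak convergence into a pointwise statement usable with the superquadraticity. By the local compact embedding $H^{1}(\mathbb{R}^{N})\hookrightarrow L^{2}_{loc}(\mathbb{R}^{N})$ I may assume, after a further subsequence, that $v_{n}\to v$ a.e.\ in $\mathbb{R}^{N}$. Since $v\neq0$, the set $A=\{x:v(x)\neq0\}$ has positive measure, and for a.e.\ $x\in A$ we have $|u_{n}(x)|=\|u_{n}\|\,|v_{n}(x)|\to+\infty$, so that
$$\frac{F(x,u_{n}(x))}{\|u_{n}\|^{2}}=\frac{F(x,u_{n}(x))}{u_{n}(x)^{2}}\,v_{n}(x)^{2}\longrightarrow+\infty .$$
Because $F\ge0$, Fatou's lemma gives $\int_{\mathbb{R}^{N}}F(x,u_{n})/\|u_{n}\|^{2}\to+\infty$, contradicting $\int_{\mathbb{R}^{N}}F(x,u_{n})\le\frac12\|u_{n}\|^{2}$. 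Hence some $\rho_{0}>r$ works; fixing any $\rho>\rho_{0}$ gives $\varphi\le0$ on all of $\mathcal{M}_{0}$, whence $\max_{\mathcal{M}_{0}}\varphi=0$ (attained at $u=0$), while $d<\infty$ was already shown. The anticipated obstacle is precisely this weak-to-a.e.\ passage together with the verification that the weak limit $v$ is nonzero, which is what makes Fatou applicable in the non-compact setting of $\mathbb{R}^{N}$.
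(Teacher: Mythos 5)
Your proof is correct, but the core step—showing $\varphi\le 0$ on the large sphere in $Y\oplus\mathbb{R}^{+}z$—is handled by a genuinely different argument than the paper's. The paper works quantitatively: from $(f_{4})$ it extracts the uniform lower bound $F(x,t)\ge c|t|^{\theta}-\delta|t|^{2}$ (its (\ref{54})), absorbs the $\delta|t|^{2}$ term into the quadratic part, and then uses the norm comparison $\|\lambda_{n}z\|_{\theta}\le M\|y_{n}+\lambda_{n}z\|_{\theta}$ to arrive at the explicit estimate $\varphi(y_{n}+\lambda_{n}z)\le\tilde k_{1}\lambda_{n}^{2}-\tilde k_{2}\lambda_{n}^{\theta}-\frac{1}{4}\|y_{n}\|^{2}$, from which negativity for $\|u\|$ large follows by a case analysis on $(\lambda_{n})$ and $(\|y_{n}\|)$. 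You instead use only the qualitative consequence $F(x,t)/t^{2}\to+\infty$ and run the normalization--weak-convergence--Fatou argument familiar from Szulkin--Weth type proofs under $(h_{6})$ (cf.\ \cite{Szulkin}); the decisive observation that $v_{n}^{+}$ lives in the one-dimensional space $\mathbb{R}z$ and stays bounded away from zero, so the weak limit is nontrivial, is exactly what makes Fatou applicable and is correctly executed. Your route is softer (it would survive with a general superquadraticity hypothesis in place of the full Ambrosetti--Rabinowitz structure, and it sidesteps the $L^{\theta}$ projection bound, which in the paper is asserted without justification), while the paper's route yields an explicit polynomial decay rate for $\varphi$ at infinity on $Y\oplus\mathbb{R}^{+}z$. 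Your treatment of the remaining items is also fine and in one place simpler: you bound $d$ by $\varphi(u)\le\frac{1}{2}\|u^{+}\|^{2}\le\frac{1}{2}\rho^{2}$ directly from $F\ge 0$, whereas the paper invokes $(f_{2})$ to say $\varphi$ maps bounded sets to bounded sets.
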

	\begin{proof}
		
	First of all, we recall that $(f_4)$ ensures that for each $\delta>0$, there is $c=c(\delta)>0$ such that 
\begin{equation} \label{54}
	F(x,t) \geq c|t|^{\theta}- \delta |t|^{2},\;\forall\; (x,t) \in \mathbb{R}^{N} \times \mathbb{R}.
\end{equation}
For fixed $z \in Z\backslash \{0\}$, consider $u \in Y\oplus \mathbb{R}^{+}z$, that is, $u=y+\lambda z$ for $\lambda\geq 0$ and $y \in Y$. By (\ref{54}) and continuous Sobolev embedding, 
		\begin{eqnarray}
		\varphi(u)=\varphi(y+\lambda z)&=& \frac{\lambda^{2}}{2}||z||^{2}- \frac{1}{2}||y||^{2}- \int_{\mathbb{R}^{N}} F(x,y+\lambda z) \nonumber \\
		& \leq &  \frac{\lambda^{2}}{2}||z||^{2}- \frac{1}{2}||y||^{2}- c_{1}||y+\lambda z||_{\theta}^{\theta}+ \delta ||y+\lambda z||_{2}^{2} \nonumber \\
		& \leq &  \frac{\lambda^{2}}{2}||z||^{2}- \frac{1}{2}||y||^{2}- c_{1}||y+\lambda z||_{\theta}^{\theta}+ \delta c_{2}||y+\lambda z||^{2} \nonumber \\
		&=& \lambda^{2} \left(\frac{1}{2}+\delta c_{2} \right)||z||^{2}+ \left(\delta c_{2}- \frac{1}{2}\right)||y||^{2}- c_{1} ||y+\lambda z||_{\theta}^{\theta}. \nonumber
		\end{eqnarray}
		Choosing $\delta = \frac{1}{4c_{2}}$, we get 
		\begin{equation}\label{57}
		\varphi(y+\lambda z) \leq \frac{3}{4} \lambda^{2} ||z||^{2}- \frac{1}{4}||y||^{2}-c_{1}||y+\lambda z||_{\theta}^{\theta}.
		\end{equation}
		\begin{claim}\label{01}
			There exists $R>0$ that 
			$$\varphi(u) \leq 0,\;\mbox{for}\;\; u \in Y\oplus \mathbb{R}^{+}z\;\;\mbox{and}\;\; ||u||=R.$$
		\end{claim}
		Suppose that there are $(y_{n}) \subset Y$ and $(\lambda_{n}) \subset [0,+\infty)$ such that $||y_{n}+\lambda_{n} z||\rightarrow +\infty$ and $\varphi(y_{n}+\lambda_{n}z)> 0$ for all $n \in \mathbb{N}$. Thus,
		$$0  < \varphi(y_{n}+\lambda_{n}z)= \frac{\lambda_{n}^{2}}{2}||z||^{2}-\frac{1}{2}||y_{n}||^{2}- \int_{\mathbb{R}^{N}} F(x,y_{n}+\lambda_{n}z)\leq \frac{\lambda_{n}^{2}}{2}||z||^{2}-\frac{1}{2}||y_{n}||^{2},$$
		implying that
		\begin{equation}\label{58}
		\frac{||y_{n}||^{2}}{||\lambda_{n} z||^{2}} \leq 1,\;\forall\; n \in \mathbb{N}.
		\end{equation}
		Since there exists $M>0$ such that
		$$||\lambda_{n} z||_{\theta} \leq M ||y_{n}+\lambda_{n} z||_{\theta}, \quad \forall\; n \in \mathbb{N},$$
we derive that
		\begin{eqnarray}
		\varphi(y_{n}+\lambda_{n} z) &\leq& \frac{3}{4} \lambda_{n}^{2} ||z||^{2}- \frac{1}{4}||y_{n}||^{2}-c_{1}||y_{n}+\lambda_{n} z||_{\theta}^{\theta} \nonumber \\
		&\leq& \frac{3}{4} \lambda_{n}^{2} ||z||^{2}- \frac{1}{4}||y_{n}||^{2}-\frac{c_{1}}{M^{\theta}}||\lambda_{n} z||_{\theta}^{\theta} \nonumber \\
		&=& \frac{3}{4} \lambda_{n}^{2} ||z||^{2}- \frac{1}{4}||y_{n}||^{2}-\lambda_{n}^{\theta}\frac{c_{1}}{M^{\theta}}|| z||_{\theta}^{\theta}, \nonumber
		\end{eqnarray}
		that is,
		\begin{equation}\label{73}
		\varphi(y_{n}+\lambda_{n} z) \leq  \tilde{k}_{1}\lambda_{n}^{2} - \lambda_{n}^{\theta}\tilde{k}_{2}- \frac{1}{4}||y_{n}||^{2},\;\forall\; n \in \mathbb{N}.
		\end{equation}
	As $||y_{n}+ \lambda_{n} z||\rightarrow +\infty$, the following may occur:
		\begin{itemize}
			\item [(i)] $||y_{n}|| \rightarrow +\infty$ and $ \lambda_{n} \rightarrow +\infty$.
			\item [(ii)] $(y_{n})$ is bounded and $\lambda_{n} \rightarrow +\infty$.
			\item [(iii)] $||y_{n}|| \rightarrow +\infty$ and $(\lambda_{n})$ is bounded.
		\end{itemize}
		In any of the above cases, by (\ref{73}), there is $n_{0} \in \mathbb{N}$ such that 
		$$0 \leq \varphi(y_{n}+\lambda_{n} z)<0,\;\forall\; n \geq n_{0},$$
	which is absurd. This proves Claim \ref{01}. 
		
		Fixed $\rho=R>0$, we define $\mathcal{M}_{0}$ and $\mathcal{M}$ as above. For $u \in \mathcal{M}_{0}$, we have the following:
		\begin{itemize}
			\item if $||u||=\rho$ and $\lambda\geq 0$, then $\varphi(u) \leq 0$.
			\item if $||u||\leq \rho$ and $\lambda=0$, then $\varphi(u)= \varphi(y)$ with $y \in Y$ and 
			$$\varphi(y) =- \frac{1}{2}||y||^{2}- \int_{\mathbb{R}^{N}} F(x,y) \leq 0,$$
		\end{itemize}
	because $F(x,t) \geq 0$.  In addition, $0 \in \mathcal{M}_{0}$  and $\varphi(0)=0$, then
		$$\max_{\mathcal{M}_{0}} \varphi=0.$$
		By $(f_{2})$, $\varphi$ maps bounded sets into bounded sets, hence $d=\sup_{\mathcal{M}} \varphi < \infty$.
	\end{proof}
	
The next lemma follows as in \cite[Lemma 6.15]{MW} and we omit its proof. 
\begin{lemma} \label{tau} The energy functional $\varphi$ is $\tau$-upper semicontinuous. 
	
\end{lemma}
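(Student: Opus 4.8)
The plan is to use that the topology $\tau$ is induced by the norm $|||\cdot|||$ and is therefore metrizable, so that $\tau$-upper semicontinuity coincides with \emph{sequential} $\tau$-upper semicontinuity; hence it suffices to show that $u_{n}\stackrel{\tau}{\rightarrow} u_{0}$ in $H^{1}(\mathbb{R}^{N})$ implies $\limsup_{n}\varphi(u_{n})\leq\varphi(u_{0})$. Passing to a subsequence, I may assume $\varphi(u_{n})\to L:=\limsup_{n}\varphi(u_{n})$. From the definition of $|||\cdot|||$, the convergence $u_{n}\stackrel{\tau}{\rightarrow} u_{0}$ gives $u_{n}^{+}\to u_{0}^{+}$ in $Z$, so that $(u_{n}^{+})$ is bounded, and $(u_{n}^{-},e_{k})\to (u_{0}^{-},e_{k})$ for each $k$; crucially, it gives \emph{no} a priori bound on $\|u_{n}^{-}\|$. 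I would then argue by a dichotomy on $\|u_{n}^{-}\|$.

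If, along a subsequence, $\|u_{n}^{-}\|\to+\infty$, then, since $F\geq 0$ by $(f_{4})$ and $(u_{n}^{+})$ is bounded,
\[
\varphi(u_{n})=\tfrac12\|u_{n}^{+}\|^{2}-\tfrac12\|u_{n}^{-}\|^{2}-\int_{\mathbb{R}^{N}}F(x,u_{n})\leq \tfrac12\|u_{n}^{+}\|^{2}-\tfrac12\|u_{n}^{-}\|^{2}\longrightarrow -\infty,
\]
so $L=-\infty\leq\varphi(u_{0})$ and we are done. Otherwise $(u_{n}^{-})$ is bounded, hence $(u_{n})$ is bounded in $H^{1}(\mathbb{R}^{N})$; Lemma \ref{36} then yields $u_{n}^{+}\to u_{0}^{+}$ and $u_{n}^{-}\rightharpoonup u_{0}^{-}$, and therefore $u_{n}\rightharpoonup u_{0}$ in $H^{1}(\mathbb{R}^{N})$. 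Using the Rellich--Kondrachov theorem on each ball $B_{R}(0)$ together with a diagonal extraction, I may assume in addition that $u_{n}\to u_{0}$ a.e. in $\mathbb{R}^{N}$.

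It then remains to pass to the limit term by term. The first term is a genuine limit, $\tfrac12\|u_{n}^{+}\|^{2}\to\tfrac12\|u_{0}^{+}\|^{2}$. For the second, weak lower semicontinuity of the norm gives $\|u_{0}^{-}\|\leq\liminf_{n}\|u_{n}^{-}\|$, hence $\limsup_{n}\bigl(-\tfrac12\|u_{n}^{-}\|^{2}\bigr)\leq-\tfrac12\|u_{0}^{-}\|^{2}$. For the third, $F(x,\cdot)$ is continuous (being an antiderivative of $f$), so $F(x,u_{n})\to F(x,u_{0})$ a.e.; since $F\geq 0$, Fatou's lemma yields $\int_{\mathbb{R}^{N}}F(x,u_{0})\leq\liminf_{n}\int_{\mathbb{R}^{N}}F(x,u_{n})$, i.e. $\limsup_{n}\bigl(-\int_{\mathbb{R}^{N}}F(x,u_{n})\bigr)\leq-\int_{\mathbb{R}^{N}}F(x,u_{0})$. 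Combining the three estimates and using the subadditivity of $\limsup$,
\[
L=\limsup_{n}\varphi(u_{n})\leq \tfrac12\|u_{0}^{+}\|^{2}-\tfrac12\|u_{0}^{-}\|^{2}-\int_{\mathbb{R}^{N}}F(x,u_{0})=\varphi(u_{0}),
\]
which is exactly what we want.

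The main obstacle is the one flagged above: a $\tau$-convergent sequence may fail to be bounded in the genuine norm of $H^{1}(\mathbb{R}^{N})$ precisely in the negative-definite direction $u^{-}$, so Lemma \ref{36} is not directly applicable. The point is that this potential unboundedness is harmless, since it drives $\varphi(u_{n})$ to $-\infty$; the dichotomy on $\|u_{n}^{-}\|$ turns this into an asset. A secondary, purely technical difficulty, due to working on $\mathbb{R}^{N}$ rather than on a bounded domain, is that a.e. convergence must be extracted from the local compact embeddings on balls via a diagonal argument rather than from a single compact embedding.
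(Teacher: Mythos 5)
Your proof is correct and follows essentially the same route as the paper, which omits the argument and refers to \cite[Lemma 6.15]{MW}: the key points there are exactly yours, namely that $F\geq 0$ and the convergence of $(u_n^{+})$ force boundedness of $(u_n^{-})$ on any $\tau$-convergent sequence with $\varphi$ bounded below (your dichotomy is just a rephrasing of this), after which weak convergence, Fatou and weak lower semicontinuity of the norm finish the job.
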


The Lemmas \ref{55}, \ref{60} and \ref{tau} yield the functional $\varphi$ satisfies the assumptions of Theorem \ref{59}, then the corollary below is true.  

	\begin{corollary}\label{68}
		Assume $(f_{1})-(f_{4})$. Then, there is $c \in [b,d]$ and a sequence $(u_{n}) \subset H^{1}(\mathbb{R}^{N})$ such that
		\begin{equation*}
		\varphi(u_{n})\rightarrow c\;\;\mbox{and}\;\; \lambda_{\varphi}(u_{n})\rightarrow 0.
		\end{equation*}
	\end{corollary}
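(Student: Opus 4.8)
The plan is to obtain the Corollary as an immediate application of the abstract linking result, Theorem \ref{59}, to the energy functional $\varphi$ associated with $(P)$. To this end one takes $X=H^{1}(\mathbb{R}^{N})$ equipped with the equivalent inner product coming from $(V)$, and uses the spectral splitting $H^{1}(\mathbb{R}^{N})=E^{+}\oplus E^{-}$ to identify the abstract subspaces of Section 5 with $Z=E^{+}$ and $Y=E^{-}$; here $Y$ is a separable closed subspace and $Z=Y^{\perp}$, exactly as required, and for $u=u^{+}+u^{-}$ the projections $u^{+},u^{-}$ coincide with those appearing in (\ref{varphi2}). First I would record that $\varphi\in Lip_{loc}(H^{1}(\mathbb{R}^{N}),\mathbb{R})$: writing $\varphi=\tfrac12\|\cdot^{+}\|^{2}-\tfrac12\|\cdot^{-}\|^{2}-\Psi$, the quadratic part is of class $C^{1}$, while $\Psi$ is locally Lipschitz on $L^{\Phi}(\mathbb{R}^{N})$ by the lemma of Section 4 and hence on $H^{1}(\mathbb{R}^{N})$ through the continuous embedding $H^{1}(\mathbb{R}^{N})\hookrightarrow L^{\Phi}(\mathbb{R}^{N})$ guaranteed by (\ref{emphi}); Lemma \ref{35} then gives $\partial\varphi(u)=Q'(u)-\partial\Psi(u)$.

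Next I would check, one by one, the three structural hypotheses of Theorem \ref{59}. Hypothesis (\ref{5}), the $\tau$-upper semicontinuity of $\varphi$, is precisely Lemma \ref{tau}. The linking inequalities (\ref{6}) are assembled from the two geometric lemmas: Lemma \ref{55} supplies $r>0$ with $b:=\inf_{S}\varphi>0$, while Lemma \ref{60} produces $z\in Z$ and $\rho>r>0$ with $\sup_{\mathcal{M}_{0}}\varphi=0$ and $d:=\sup_{\mathcal{M}}\varphi<\infty$; together these give exactly $b>0=\sup_{\mathcal{M}_{0}}\varphi$ and $d<\infty$. Finally, condition $(H)$ has already been verified for $\varphi$ in the subsection ``Condition check $(H)$'' above, where the boundedness of $(u_{n})$, the uniform bound on $\partial\varphi(u_{n})$, and the weak$^{*}$ closedness of the graph of $\partial\varphi$ (via Proposition \ref{7}) were established.

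With all hypotheses in force, the conclusion is immediate: Theorem \ref{59} applied to $I=\varphi$ yields a level $c\in[b,d]$ and a sequence $(u_{n})\subset H^{1}(\mathbb{R}^{N})$ with $\varphi(u_{n})\to c$ and $\lambda_{\varphi}(u_{n})\to 0$, which is the assertion. There is essentially no analytic obstacle left at this stage, since the difficult work has been absorbed into Lemmas \ref{55}, \ref{60}, \ref{tau} and the verification of $(H)$; the only point requiring care is the bookkeeping that matches the abstract data of Theorem \ref{59} (separability of $Y$, the orthogonality $Z=Y^{\perp}$, and the normalization of $z$ and of $r<\rho$) with the concrete spectral subspaces and the constants produced by Lemmas \ref{55} and \ref{60}.
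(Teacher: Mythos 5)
Your proposal is correct and follows exactly the route the paper takes: the corollary is obtained by applying Theorem \ref{59} to $I=\varphi$ with $Y=E^{-}$, $Z=E^{+}$, after observing that Lemma \ref{tau} gives (\ref{5}), Lemmas \ref{55} and \ref{60} give (\ref{6}), and the subsection ``Condition check $(H)$'' verifies $(H)$. No discrepancy to report.
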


The next lemma is crucial in our study, because it establishes the boundedness of $(PS)$ sequences of $\varphi$.

	\begin{lemma}\label{69}
		Suppose $(f_{1})-(f_{4})$. Then, every $(PS)$-sequence is bounded.
	\end{lemma}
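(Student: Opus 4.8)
The plan is to adapt the Ambrosetti--Rabinowitz scheme to the strongly indefinite, nonsmooth setting: first use $(f_4)$ to control the nonlinear terms, then reduce the boundedness to an a priori $L^p$-estimate, and finally close that estimate by a rescaling argument based on the superquadraticity contained in $(f_4)$ together with the $\mathbb{Z}^N$-periodicity of $f$.

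To begin, let $(u_n)$ be a $(PS)$-sequence, i.e. $\varphi(u_n)\to c$ and $\lambda_\varphi(u_n)\to 0$, and for each $n$ choose $\xi_n\in\partial\varphi(u_n)$ with $||\xi_n||_*=\lambda_\varphi(u_n)\to 0$. By Lemma \ref{35} we may write $\xi_n=Q'(u_n)-\rho_n$ with $\rho_n\in\partial\Psi(u_n)$, and by Theorem \ref{72} there is $\tilde{\rho}_n\in L^{\tilde{\Phi}}(\mathbb{R}^N)$ with $\langle\rho_n,v\rangle=\int_{\mathbb{R}^N}\tilde{\rho}_n v$ and $\tilde{\rho}_n(x)\in\partial_tF(x,u_n(x))$ a.e. Computing $\varphi(u_n)-\tfrac12\langle\xi_n,u_n\rangle$ the quadratic part cancels and leaves $\tfrac12\int_{\mathbb{R}^N}\tilde{\rho}_n u_n-\int_{\mathbb{R}^N}F(x,u_n)$. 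Since $\tilde{\rho}_n(x)\in\partial_tF(x,u_n(x))$, condition $(f_4)$ gives $\theta F(x,u_n)\le u_n\tilde{\rho}_n$, so this expression is $\ge(\tfrac12-\tfrac1\theta)\int_{\mathbb{R}^N}\tilde{\rho}_n u_n\ge 0$; bounding the left-hand side by $C+o_n(1)||u_n||$ and using $u_n\tilde{\rho}_n\ge\theta F\ge 0$ (whence $|\tilde{\rho}_n||u_n|=\tilde{\rho}_n u_n$) yields
\begin{equation*}
\int_{\mathbb{R}^N}|\tilde{\rho}_n|\,|u_n|\le C(1+||u_n||)\qquad\text{and}\qquad \int_{\mathbb{R}^N}F(x,u_n)\le C(1+||u_n||).
\end{equation*}

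Next I would bound the norm. Evaluating $\xi_n=Q'(u_n)-\rho_n$ at $u_n^+-u_n^-$ and recalling $\langle Q'(u_n),u_n^+-u_n^-\rangle=||u_n^+||^2+||u_n^-||^2=||u_n||^2$ gives
\begin{equation*}
||u_n||^2\le ||\xi_n||_*\,||u_n||+\int_{\mathbb{R}^N}|\tilde{\rho}_n|\,(|u_n^+|+|u_n^-|).
\end{equation*}
By $(f_2)$ and $(f_3)$, for each $\varepsilon>0$ one has $|\tilde{\rho}_n|\le\varepsilon|u_n|+C_\varepsilon|u_n|^{p-1}$; inserting this, the continuous embeddings $H^1(\mathbb{R}^N)\hookrightarrow L^2(\mathbb{R}^N),\,L^p(\mathbb{R}^N)$ and $||u_n^\pm||\le||u_n||$ allow me to absorb the quadratic contribution for $\varepsilon$ small, leaving $||u_n||\le o_n(1)+C\,||u_n||_p^{\,p-1}$. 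Thus everything reduces to an a priori bound for $||u_n||_p$.

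The main obstacle is precisely this $L^p$-bound: because the quadratic form is indefinite, the $L^2$-mass of $u_n$ is not controlled by the energy, so the elementary estimates are too lossy to be fed back directly. To overcome it I would argue by contradiction, assuming $||u_n||\to\infty$ and setting $v_n=u_n/||u_n||$, so that $||v_n||=1$ and, along a subsequence, $v_n\rightharpoonup v$ in $H^1(\mathbb{R}^N)$. The bound $\int F(x,u_n)\le C(1+||u_n||)$ forces $\int_{\mathbb{R}^N}F(x,u_n)/||u_n||^2\to0$, while (\ref{54}) gives $F(x,t)/|t|^2\ge c|t|^{\theta-2}-\delta\to+\infty$ as $|t|\to\infty$, with $F\ge0$. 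Hence, if $v\neq0$ then $|u_n|\to\infty$ on $\{v\neq0\}$ and Fatou's lemma gives $\int F(x,u_n)/||u_n||^2\to+\infty$, a contradiction; therefore $v=0$. To finish I would exploit the $\mathbb{Z}^N$-periodicity of $f$ through Lions' concentration--compactness lemma: in the non-vanishing case a translation $\hat{u}_n=u_n(\cdot+k_n)$, $k_n\in\mathbb{Z}^N$, is again a $(PS)$-sequence with $\int F(x,\hat{u}_n)=\int F(x,u_n)$ whose rescalings converge weakly to some $w\neq0$, which contradicts $\int F(x,\hat{u}_n)/||u_n||^2\to0$ by the step just performed. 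The genuinely delicate point is the vanishing case, where $v_n\to0$ in $L^s(\mathbb{R}^N)$ for every $s\in(2,2^*)$; here I would combine this strong decay with the already-established control of $\int|\tilde{\rho}_n||u_n|$ through a refined H\"older estimate in the Orlicz spaces $L^{\Phi},L^{\tilde{\Phi}}$ in order to show $\int_{\mathbb{R}^N}\tilde{\rho}_n u_n^{\pm}/||u_n||^2\to0$, whence the rescaled identities $||v_n^+||^2=\int_{\mathbb{R}^N}\tilde{\rho}_n u_n^+/||u_n||^2+o_n(1)$ and $||v_n^-||^2=-\int_{\mathbb{R}^N}\tilde{\rho}_n u_n^-/||u_n||^2+o_n(1)$ would force $||v_n^+||,||v_n^-||\to0$, contradicting $||v_n||=1$. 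Securing this last implication is the technical heart of the lemma, and the resulting contradiction establishes the boundedness of $(u_n)$.
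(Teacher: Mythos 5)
Your opening step coincides with the paper's: computing $\varphi(u_n)-\tfrac12\langle \xi_n,u_n\rangle$ and invoking $(f_4)$ to obtain $\int_{\mathbb{R}^N}\tilde\rho_n u_n\le C(1+\|u_n\|)$ is exactly how the paper begins. After that, however, you reduce to an a priori $L^p$-bound and launch a contradiction argument via rescaling, Fatou and Lions' concentration--compactness dichotomy, and the decisive step --- showing $\int_{\mathbb{R}^N}|\tilde\rho_n|\,|u_n^{\pm}|/\|u_n\|^2\to 0$ in the vanishing case --- is left as an unproved claim ("a refined H\"older estimate in the Orlicz spaces"). You correctly identify this as the technical heart of the lemma, but identifying it is not proving it: the difficulty is precisely that $|u_n^{\pm}|$ is not pointwise dominated by $|u_n|$, so the bound on $\int|\tilde\rho_n|\,|u_n|$ does not transfer to $\int|\tilde\rho_n|\,|u_n^{\pm}|$ by any direct H\"older manipulation. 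As written, the proof has a genuine gap at its critical point.

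The missing ingredient is the paper's Claim \ref{61}: from $(F_*)$ one gets the pointwise inequalities $|\rho|^2\le\tilde c\,t\rho$ for $|t|\le 1$ and $|\rho|^{r}\le\tilde c\,t\rho$ for $|t|\ge 1$, where $r=p/(p-1)$ and $\rho\in\partial_tF(x,t)$. Splitting $\mathbb{R}^N$ into $A_n=\{|u_n|\le 1\}$ and its complement, the bound $\int\tilde\rho_n u_n\le K(M+\|u_n\|)$ then controls $\|\tilde\rho_n\|_{L^2(A_n)}^2$ and $\|\tilde\rho_n\|_{L^{r}(A_n^c)}^{r}$ by $C(1+\|u_n\|)$, and H\"older plus the Sobolev embeddings give
\begin{equation*}
\int_{\mathbb{R}^N}|\tilde\rho_n|\,|u_n^{\pm}|\le C\|u_n\|\bigl[(1+\|u_n\|)^{1/2}+(1+\|u_n\|)^{(p-1)/p}\bigr]=o(\|u_n\|^2).
\end{equation*}
Note that this estimate holds unconditionally, with no appeal to vanishing; once it is in hand your entire contradiction scheme (Fatou, periodic translations, the dichotomy) becomes superfluous, since testing $\xi_n$ against $u_n^{+}$ and $-u_n^{-}$ yields $\|u_n\|^2\le 2\|u_n\|[K(M+\|u_n\|)]^{1/2}M_1+2\|u_n\|[K(M+\|u_n\|)]^{1/r}M_2$, and boundedness follows immediately because both exponents $1/2$ and $1/r$ are strictly less than $1$. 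This is the route the paper takes. Your non-vanishing analysis and the reduction $\|u_n\|\le o_n(1)+C\|u_n\|_p^{p-1}$ are sound, but they do not close the argument without the pointwise inequalities above.
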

	\begin{proof}
		
		Let $(u_{n}) \subset H^{1}(\mathbb{R}^{N})$ be a $(PS)_d$ sequence for $\varphi$, that is, 
		\begin{equation*}
		\varphi(u_{n})\rightarrow d\;\;\mbox{and}\;\; \lambda_{\varphi}(u_{n})\rightarrow 0,
		\end{equation*}
		for some $d \in \mathbb{R}$. Consider $(w_{n}) \subset \partial \varphi (u_{n})$ with $\lambda_{\varphi}(u_{n})= ||w_{n}||_{*}$ and 
		$$w_{n}= Q'(u_{n})- \rho_{n},$$
		where $\rho_{n} \in \partial \Psi(u_{n}) \subset L^{\tilde{\Phi}}(\mathbb{R}^{N})$. From $(f_{4})$,
		$$
		0<\theta F(x,t) \leq t \xi, \quad \forall  \xi \in \partial_tF(x,t) \quad \mbox{and} \quad \forall t \in \mathbb{R}, 
		$$
		and so, 
		\begin{equation*}\label{62}
		\frac{1}{2} \int_{\mathbb{R}^{N}} \rho_{n} u_{n}- \int_{\mathbb{R}^{N}} F(x, u_{n}) \geq \left(\frac{1}{2}- \frac{1}{\theta}\right)\int_{\mathbb{R}^{N}} \rho_{n} u_{n},
		\end{equation*}
	leading to 
		\begin{eqnarray}
		\varphi(u_{n})- \frac{1}{2} \left<w_{n}+ \rho_{n}, u_{n} \right> &=& - \int_{\mathbb{R}^{N}} F(x, u_{n}) \nonumber \\
		&=& - \int_{\mathbb{R}^{N}} F(x, u_{n}) + \frac{1}{2} \int_{\mathbb{R}^{N}} \rho_{n} u_{n} - \frac{1}{2} \int_{\mathbb{R}^{N}} \rho_{n} u_{n} \nonumber \\
		&=& \left[\frac{1}{2} \int_{\mathbb{R}^{N}} \rho_{n} u_{n} - \int_{\mathbb{R}^{N}} F(x, u_{n})\right]-\frac{1}{2} \int_{\mathbb{R}^{N}} \rho_{n} u_{n} \nonumber \\
		&\geq& \left(\frac{1}{2}- \frac{1}{\theta}\right)\int_{\mathbb{R}^{N}} \rho_{n} u_{n}-\frac{1}{2} \int_{\mathbb{R}^{N}} \rho_{n} u_{n}, \nonumber
		\end{eqnarray}
		that is,
		\begin{equation*}
		\varphi(u_{n})- \frac{1}{2} \left<w_{n}+ \rho_{n}, u_{n} \right> \geq \left(\frac{1}{2}- \frac{1}{\theta}\right)\int_{\mathbb{R}^{N}} \rho_{n} u_{n}-\frac{1}{2} \int_{\mathbb{R}^{N}} \rho_{n} u_{n}, 
		\end{equation*}
		or equivalently,
		\begin{equation}\label{63}
		\varphi(u_{n})- \frac{1}{2} \left<w_{n}, u_{n} \right> \geq \left(\frac{1}{2}- \frac{1}{\theta}\right)\int_{\mathbb{R}^{N}} \rho_{n} u_{n}. 
		\end{equation}
		
		Using the fact that $(\varphi(u_{n}))$ is bounded, there exists $M>0$ such that
		$$M- \frac{1}{2} \left<w_{n}, u_{n} \right> \geq \left(\frac{1}{2}- \frac{1}{\theta}\right)\int_{\mathbb{R}^{N}} \rho_{n} u_{n}, \;\forall\; n \in \mathbb{N}.$$
Since $||w_{n}||_{*}=o_{n}(1)$,  there exists $n_{0} \in \mathbb{N}$ such that
		\begin{equation}\label{64}
		(M+||u_{n}||)K \geq \int_{\mathbb{R}^{N}} \rho_{n} u_{n}\;\;\mbox{for}\;\; n \geq n_{0},
		\end{equation}
		where $K=\frac{2\theta}{\theta-2}>0$.
		
		Im the sequel we consider the set
		$$A_n= \left\{x \in \mathbb{R}^{N}\;:\; |u_{n}(x)| \leq 1\right\}.$$
		By (\ref{64}) and $(f_4)$,
		\begin{equation} \label{65}
		\int_{A_n} \rho_{n} u_{n} \leq (M+||u_{n}||)K\;\;\mbox{for}\;\; n \geq n_{0} 
		\end{equation}
		and
		\begin{equation} \label{66}
		\int_{\mathbb{R}^{N}\backslash A_n} \rho_{n} u_{n} \leq (M+||u_{n}||)K\;\;\mbox{for}\;\; n \geq n_{0}.
		\end{equation}
	
		\begin{claim}\label{61}
	Assume $(f_{1})-(f_{4})$ and let $r= \frac{p}{p-1}$ be the conjugate exponent of $p>1$. Then,  there is $\tilde{c}>0$ such that
		\begin{itemize}
			\item [(i)] $|\rho|^{2} \leq \tilde{c}\; t \rho$ for $|t|\leq 1$,
			\item [(ii)] $|\rho|^{r} \leq \tilde{c} \;t \rho$ for $|t|\geq 1$,
		\end{itemize}
for all $\rho \in \partial_{t}F(x,t)$.	
\end{claim}
Indeed, for $|t|\leq 1$, by $(F_{*})$ 
		$$|\rho| \leq C_{0}\;(|t|+|t|^{p-1}) \leq 2\;C_{0}|t|,$$
		that is,
		$$|\rho|^{2} \leq  2\;C_{0}|t||\rho|=2C_{0}\;t\rho.$$
		For $|t|\geq 1$, again by $(F_{*})$, we get
		\begin{eqnarray}
		|\rho|^{r-1} &\leq& C_{0}^{r-1} (|t|+|t|^{p-1})^{r-1} \nonumber \\
		&\leq & C_{0}^{r-1} (1+2|t|^{p-1})^{r-1} \nonumber \\
		&\leq & C_{0}^{r-1} k_{1}+ C_{0}^{p-1} k_{2} |t|^{(r-1)(p-1)} \nonumber \\
		&\leq & C_{1} |t|+ C_{2}|t|\leq  \tilde{C} |t|, \nonumber
		\end{eqnarray}
		that is,
		$$|\rho|^{r} \leq \tilde{C} \; |t||\rho|= \tilde{C} \;t\rho.$$
		Now the claim follows with $\tilde{c}= \max\{\tilde{C}, 2C\}>0$.
		
		Denote
		$$
		y_{n}= u_{n}^{-}\;\;\mbox{and}\;\; z_{n}=u_{n}^{+}.
		$$
		The H\"older inequality together with Claim \ref{61}, Sobolev embedding and (\ref{65}) establish 
		\begin{eqnarray}
		||y_{n}||^{2}&=& - \left<w_{n}- \rho_{n}, y_{n}\right> \nonumber \\
		&= & - \left<w_{n}, y_{n}\right>- \left<\rho_{n}, y_{n}\right> \nonumber \\ 
		&\leq & |\left<w_{n}, y_{n}\right>|+ |\left<\rho_{n}, y_{n}\right>| \nonumber \\
		&\leq & ||y_{n}||+ \int_{\mathbb{R}^{N}} |\rho_{n}|\; |y_{n}|  \nonumber \\
		&\leq & ||y_{n}||+\left(\int_{A_n} |\rho_{n}|^{2}\right)^{\frac{1}{2}}\left(\int_{A_n} |y_{n}|^{2}\right)^{\frac{1}{2}}+ \left(\int_{\mathbb{R}^{N}\backslash A_n} |\rho_{n}|^{r}\right)^{\frac{1}{r}}\left(\int_{\mathbb{R}^{N}\backslash A_n} |y_{n}|^{p}\right)^{\frac{1}{p}} \nonumber \\
		&\leq&  ||y_{n}||+\left(\tilde{c}\int_{A_n} \rho_{n} u_{n}\right)^{\frac{1}{2}}C_{1}||y_{n}||+ \left(\tilde{c}\int_{\mathbb{R}^{N}\backslash A_n}  \rho_{n} u_{n}\right)^{\frac{1}{r}}C_{2}||y_{n}|| \nonumber \\
		&\leq& ||y_{n}||+ \tilde{c}^{\frac{1}{2}}[K(M+||u_{n}||)]^{\frac{1}{2}}C_{1}||y_{n}||+ \tilde{c}^{\frac{1}{r}}[K(M+||u_{n}||)]^{\frac{1}{r}}C_{2}||y_{n}||, \nonumber
		\end{eqnarray}
		that is,
		$$||y_{n}||^{2} \leq ||y_{n}||+ \tilde{c}^{\frac{1}{2}}[K(M+||u_{n}||)]^{\frac{1}{2}}C_{1}||y_{n}||+ \tilde{c}^{\frac{1}{r}}[K(M+||u_{n}||)]^{\frac{1}{r}}C_{2}||y_{n}||,\;n \geq n_{0}.$$
		Analogously, by (\ref{66}),
		$$||z_{n}||^{2} \leq ||z_{n}||+ \tilde{c}^{\frac{1}{2}}[K(M+||u_{n}||)]^{\frac{1}{2}}C_{3}||z_{n}||+ \tilde{c}^{\frac{1}{r}}[K(M+||u_{n}||)]^{\frac{1}{r}}C_{4}||z_{n}||,\;n \geq n_{0}.$$
		Therefore, for $n \geq n_{0}$
		\begin{eqnarray}
		||u_{n}||^{2} &\leq& (||y_{n}||+||z_{n}||)[K(M+||u_{n}||)]^{\frac{1}{2}}M_{1}+ (||y_{n}||+||z_{n}||)[K(M+||u_{n}||)]^{\frac{1}{r}} M_{2} \nonumber \\
		&\leq & 2||u_{n}||[K(M+||u_{n}||)]^{\frac{1}{2}}M_{1}+ 2||u_{n}||[K(M+||u_{n}||)]^{\frac{1}{r}} M_{2}. \nonumber
		\end{eqnarray}
The aforementioned inequality yields $(u_n)$ is bounded. 
		
	\end{proof}
	
	Now, we are ready to conclude the proof of Theorem \ref{Teorema1}. \\

	\noindent {\bf Proof of Theorem \ref{Teorema1}:} \,\, 
	
\begin{proof}
		By Corollary \ref{68}  and Lemma \ref{69}, there exists a bounded sequence $(u_{n}) \subset H^{1}(\mathbb{R}^{N})$ satisfying 
		\begin{equation*}
		\varphi(u_{n})\rightarrow c>0\;\;\mbox{and}\;\; \lambda_{\varphi}(u_{n})\rightarrow 0.
		\end{equation*}
	
		\begin{claim}
			There exists $\delta>0$ such that
			$$\liminf_{n} \sup_{y \in \mathbb{R}^{N}} \int_{B(y,1)} |u_{n}|^{2}\geq \delta.$$
		\end{claim}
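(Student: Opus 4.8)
The plan is to argue by contradiction via a vanishing/non-vanishing dichotomy of Lions' type. Suppose the claim fails, so that $\liminf_n \sup_{y \in \mathbb{R}^{N}}\int_{B(y,1)}|u_n|^2 = 0$; passing to a subsequence I may assume $\sup_{y \in \mathbb{R}^{N}}\int_{B(y,1)}|u_n|^2 \to 0$. Since $(u_n)$ is bounded in $H^{1}(\mathbb{R}^{N})$, Lions' vanishing lemma (see, e.g., \cite{MW}) yields $u_n \to 0$ in $L^{s}(\mathbb{R}^{N})$ for every $s \in (2,2^{*})$; in particular $\|u_n\|_p \to 0$. All of the subsequent work consists in showing that this forces $\varphi(u_n)\to 0$, contradicting $\varphi(u_n)\to c>0$.

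First I would show that $\int_{\mathbb{R}^{N}} F(x,u_n) \to 0$. By $(f_1)$--$(f_3)$ one has, for each $\varepsilon>0$, the estimate $0 \le F(x,t) \le \tfrac{\varepsilon}{2}|t|^2 + \tfrac{C_\varepsilon}{p}|t|^p$ (nonnegativity coming from $(f_4)$), whence $0 \le \int_{\mathbb{R}^{N}} F(x,u_n) \le \tfrac{\varepsilon}{2}\|u_n\|_2^2 + \tfrac{C_\varepsilon}{p}\|u_n\|_p^p$. Since $\|u_n\|_2$ is bounded and $\|u_n\|_p \to 0$, passing to the limit and then letting $\varepsilon \downarrow 0$ gives $\int_{\mathbb{R}^{N}}F(x,u_n)\to 0$. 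Recalling $\varphi(u_n)=\tfrac12\|u_n^{+}\|^2-\tfrac12\|u_n^{-}\|^2-\int_{\mathbb{R}^{N}}F(x,u_n)$, this reduces the energy to $\varphi(u_n)=\tfrac12\|u_n^{+}\|^2-\tfrac12\|u_n^{-}\|^2+o_n(1)$.

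Next I would exploit $\lambda_\varphi(u_n)\to 0$: choose $w_n \in \partial\varphi(u_n)$ with $\|w_n\|_{*}=\lambda_\varphi(u_n)\to 0$ and write $w_n=Q'(u_n)-\rho_n$, where $\rho_n\in\partial\Psi(u_n)$ is represented by a measurable selection $\tilde\rho_n(x)\in\partial_tF(x,u_n(x))$ a.e. through Theorem \ref{72}. Testing with $u_n^{+}$ and $u_n^{-}$, which are bounded in $H^{1}(\mathbb{R}^{N})$, gives $\|u_n^{+}\|^2=\langle\rho_n,u_n^{+}\rangle+o_n(1)$ and $\|u_n^{-}\|^2=-\langle\rho_n,u_n^{-}\rangle+o_n(1)$. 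The envelope functions $\underline f,\overline f$ inherit from $(f_1)$--$(f_3)$ the refined bound $|\xi|\le\varepsilon|t|+C_\varepsilon|t|^{p-1}$ for all $\xi\in\partial_tF(x,t)$, so Hölder's inequality yields $|\langle\rho_n,u_n^{\pm}\rangle|\le\varepsilon\|u_n\|_2\|u_n^{\pm}\|_2+C_\varepsilon\|u_n\|_p^{p-1}\|u_n^{\pm}\|_p\le\varepsilon C+o_n(1)$. Letting $\varepsilon\downarrow 0$ forces $\langle\rho_n,u_n^{\pm}\rangle\to 0$, hence $\|u_n^{+}\|\to 0$ and $\|u_n^{-}\|\to 0$. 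Combined with the reduced energy identity this gives $\varphi(u_n)\to 0$, the desired contradiction.

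I expect the main obstacle to be the careful treatment of the pairing $\langle\rho_n,u_n^{\pm}\rangle$: since $\rho_n$ lives only in the dual Orlicz space $L^{\tilde\Phi}(\mathbb{R}^{N})$, one must first invoke Theorem \ref{72} to realize it as $\int_{\mathbb{R}^{N}}\tilde\rho_n\,v$ with $\tilde\rho_n\in\partial_tF(x,u_n)$, and then split the integral \emph{via the $\varepsilon$-refined growth} rather than the plain $(F_{*})$ bound. This split is essential because vanishing only delivers $L^{s}$-smallness for $s\in(2,2^{*})$ and \emph{not} $\|u_n\|_2\to 0$; the $\varepsilon|t|$ term absorbs the uncontrolled (bounded) $L^2$ contribution while the $C_\varepsilon|t|^{p-1}$ term is killed by $\|u_n\|_p\to 0$.
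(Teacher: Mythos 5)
Your proof is correct and follows the same core strategy as the paper: argue by contradiction, invoke Lions' vanishing lemma (using the boundedness of $(u_n)$ from Lemma \ref{69}) to get $u_n\to 0$ in $L^{s}(\mathbb{R}^{N})$ for $s\in(2,2^{*})$, and then kill the nonlinear terms via the $\varepsilon$-refined growth bound $|\xi|\le\varepsilon|t|+C_{\varepsilon}|t|^{p-1}$ together with H\"older's inequality, exactly as you describe in your last paragraph. The only real divergence is the endgame. The paper pairs $w_n$ with $u_n$ a single time and uses $(f_4)$ (through $F\ge 0$) to obtain $0<c=\varphi(u_n)-\tfrac{1}{2}\langle w_n,u_n\rangle+o_n(1)\le \tfrac{1}{2}\int_{\mathbb{R}^{N}}\rho_n u_n+o_n(1)$, and then shows $\int_{\mathbb{R}^{N}}\rho_n u_n\to 0$; you instead pair with $u_n^{+}$ and $u_n^{-}$ separately, deduce $\|u_n^{\pm}\|\to 0$, and conclude the stronger fact $u_n\to 0$ in $H^{1}(\mathbb{R}^{N})$, hence $\varphi(u_n)\to 0$. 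Your variant is one step longer but essentially dispenses with $(f_4)$ at this stage, and your identification of the key technical point (realizing $\rho_n$ via Theorem \ref{72} and splitting with the $\varepsilon$-growth rather than $(F_{*})$, since vanishing gives no $L^{2}$ decay) matches the paper's computation precisely. Both routes yield the contradiction with $c\ge b>0$.
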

	If the claim is not true, we must have 
		$$\liminf_{n} \sup_{y \in \mathbb{R}^{N}} \int_{B(y,1)} |u_{n}|^{2}=0.$$
	Thus, by Lions \cite [Lemma 1.21]{MW},  $u_{n}\rightarrow 0$ in $L^{s}(\mathbb{R}^{N})$ for $2<s<2^{*}$.
		On the other hand,
		\begin{equation}\label{70}
		0<c=\varphi(u_{n})- \frac{1}{2}\left<w_{n}, u_{n} \right>+o_{n}(1)= \int_{\mathbb{R}^{N}} \rho_{n} u_{n} +o_{n}(1),
		\end{equation}
		where $w_{n}= Q'(u_{n})- \rho_{n}$ with $\lambda_{\varphi}(u_{n})=||w_{n}||_{*}$ and $\rho_{n} \in  \partial \Psi(u_{n})$. 
		
		For any $\varepsilon>0$, by $(f_{1})-(f_{3})$, we get
		$$\int_{\mathbb{R}^{N}} \rho_{n} u_{n} \leq \frac{\varepsilon}{2}||u_{n}||_2^{2}+c_{2}||u_{n}||_p^{p}.$$
		Using the fact that $(u_{n})$ is bounded, Sobolev embedding  and $u_{n}\rightarrow 0$ in $L^{p}(\mathbb{R}^{N})$ 
		$$\int_{\mathbb{R}^{N}} \rho_{n} u_{n}\rightarrow 0$$
	contrary to (\ref{70}).
		
	From this, going to a subsequence if necessary, there exists $n_{0} \in \mathbb{N}$ such that
		$$\sup_{y \in \mathbb{R}^{N}} \int_{B(y,1)} |u_{n}|^{2} \geq \frac{\delta}{2}, \;n \geq n_{0}.$$
		By definition of supreme, there exists $(y_{n}) \subset \mathbb{R}^{N}$ such that
		\begin{equation*}
		\int_{B(y_{n},1)} |u_{n}|^{2} \geq \frac{\delta}{4}, \;n \geq n_{0}.
		\end{equation*}
Then, there exists $(z_{n}) \subset \mathbb{Z}^{N}$ such that
			\begin{equation*}
			\int_{B(z_{n},1+\sqrt{N})} |u_{n}|^{2} \geq \frac{\delta}{4}, \;n \geq n_{0}.
			\end{equation*}
	Setting $v_{n}(x)=u_{n}(x+z_{n})$, we compute 
		\begin{equation}
		\int_{B(0,1+\sqrt{N})} |v_{n}(x)|^{2}= \int_{B(z_{n},1+\sqrt{N})} |u_{n}(x)|^{2} \geq \frac{\delta}{4},\;n \geq n_{0}.
		\end{equation}
	As $(u_n)$ is a bounded sequence, it follows that $(v_n)$ is also a bounded sequence.  Hence, supposing that for some subsequence $v_n \rightharpoonup v$ in $H^{1}(\mathbb{R}^N)$, we get 
	$$
	\int_{B(0,1+\sqrt{N})}|v|^{2}=\lim_{n \to +\infty}\int_{B(0,1+\sqrt{N})}|v_{n}(x)|^{2} \geq \frac{\delta}{4}>0,
	$$ 
	showing that $v \not= 0$.

		\begin{claim}
			$(v_{n}) \subset H^{1}(\mathbb{R}^{N})$ is also a $(PS)_{c}$ sequence for $\varphi$.
		\end{claim}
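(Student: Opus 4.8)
The entire argument rests on the $\mathbb{Z}^{N}$-translation invariance of $\varphi$. The plan is to write $v_{n}=T_{n}u_{n}$, where $T_{n}\colon H^{1}(\mathbb{R}^{N})\to H^{1}(\mathbb{R}^{N})$ is the translation $(T_{n}u)(x)=u(x+z_{n})$, and to establish the two exact identities $\varphi(v_{n})=\varphi(u_{n})$ and $\lambda_{\varphi}(v_{n})=\lambda_{\varphi}(u_{n})$ for every $n$. Since $z_{n}\in\mathbb{Z}^{N}$ and both $V$ and $f(\cdot,t)$ (hence $F(\cdot,t)$) are $\mathbb{Z}^{N}$-periodic in $x$ by $(f_{1})$, the change of variables $y=x+z_{n}$ together with $V(y-z_{n})=V(y)$ and $F(y-z_{n},s)=F(y,s)$ gives
\[
\varphi(v_{n})=\frac{1}{2}\int_{\mathbb{R}^{N}}\bigl(|\nabla u_{n}|^{2}+V|u_{n}|^{2}\bigr)-\int_{\mathbb{R}^{N}}F(x,u_{n})=\varphi(u_{n})\longrightarrow c,
\]
which settles the energy half of the claim at once.

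For the gradient half it is essential that $T_{n}$ be a linear isometry of $H^{1}(\mathbb{R}^{N})$ for the spectral-equivalent inner product $\langle\cdot,\cdot\rangle$ appearing in (\ref{varphi2}). First I would note that $-\Delta+V$ commutes with integer translations, again by the periodicity of $V$, so the spectral projections onto $E^{+}$ and $E^{-}$ commute with $T_{n}$; consequently $T_{n}$ preserves the decomposition $H^{1}(\mathbb{R}^{N})=E^{+}\oplus E^{-}$ and hence the norm $\|\cdot\|$. Granting this, a direct manipulation of the generalized directional derivative—substituting $h=T_{n}k$ in the $\limsup$ defining $\varphi^{\circ}(T_{n}u_{n};T_{n}w)$ and using $\varphi\circ T_{n}=\varphi$—yields $\varphi^{\circ}(T_{n}u_{n};T_{n}w)=\varphi^{\circ}(u_{n};w)$ for all $w\in H^{1}(\mathbb{R}^{N})$. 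From the definition of the generalized gradient this upgrades to the covariance
\[
\partial\varphi(T_{n}u_{n})=(T_{n}^{*})^{-1}\,\partial\varphi(u_{n}),
\]
and since $(T_{n}^{*})^{-1}$ is itself an isometry of the dual it preserves dual norms, so that $\lambda_{\varphi}(v_{n})=\lambda_{\varphi}(u_{n})\to 0$.

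Combining the two identities shows that $\varphi(v_{n})\to c$ and $\lambda_{\varphi}(v_{n})\to 0$, i.e. $(v_{n})$ is a $(PS)_{c}$ sequence, as asserted. The only genuinely delicate point is the covariance of the generalized gradient under the isometry $T_{n}$; everything else is a routine change of variables. I expect this covariance to be the main step, but it follows cleanly once the invariance $\varphi\circ T_{n}=\varphi$ and the isometry property of $T_{n}$ are in place, both of which are direct consequences of the $\mathbb{Z}^{N}$-periodicity of $V$ and $f$.
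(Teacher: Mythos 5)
Your proposal is correct and follows essentially the same route as the paper: both rest on the $\mathbb{Z}^N$-translation invariance of $\varphi$, the identity $\varphi^{\circ}(T_n u;T_n w)=\varphi^{\circ}(u;w)$, and the fact that integer translations are isometries for the equivalent norm, the only cosmetic difference being that you state the full covariance $\partial\varphi(T_nu_n)=(T_n^{*})^{-1}\partial\varphi(u_n)$ and the exact equality $\lambda_{\varphi}(v_n)=\lambda_{\varphi}(u_n)$, whereas the paper works with the specific minimal-norm elements $w_n=Q'(u_n)-\rho_n$, verifies that their translates lie in $\partial\varphi(v_n)$, and only needs the one-sided bound $\lambda_{\varphi}(v_n)\le\|\tilde{w}_n\|_{*}\le\|w_n\|_{*}$. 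The paper's element-level bookkeeping of $\tilde{\rho}_n\in\partial\Psi(v_n)$ is retained because it is reused immediately afterwards to identify the equation satisfied by the weak limit, but as a proof of the claim itself your argument is complete.
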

		
		By change variable, it is immediate to see that 
		$$
		\varphi(u_{n})= \varphi(v_{n}),\quad \forall n \in \mathbb{N},
		$$
		and so,
		$$\varphi(v_{n})\rightarrow c.$$
		Now, we will show tha $\lambda_{\varphi}(v_{n})\rightarrow 0$ 
		whenever $n\rightarrow +\infty$. 
		
		First of all, we recall that there is $(w_{n}) \subset (H^{1}(\mathbb{R}^{N}))^{*}$ satisfying 
		$$
		w_{n}= Q'(u_{n})- \rho_{n}, \;\lambda_{\varphi}(u_{n})=||w_{n}||_{*},
		$$
		with $(\rho_{n}) \subset \partial \Psi(u_{n})$. Thus,   
		$$\Psi^{\circ}(u_{n}; \phi) \geq \left<\rho_{n}, \phi \right>,\forall\; \phi \in L^{\Phi}(\mathbb{R}^{N}).$$
		Taking $\phi \in H^{1}(\mathbb{R}^{N})$, we get
		\begin{eqnarray}
		\Psi^{\circ}(u_{n}; \phi(\cdot-z_{n})) &\geq&  \left<\rho_{n}, \phi(\cdot- z_{n}) \right> \nonumber \\
		&= &\int_{\mathbb{R}^{N}} \rho_{n} \phi(\cdot-z_{n}) \nonumber \\
		&=&\int_{\mathbb{R}^{N}} \rho_{n}(\cdot +z_{n}) \phi \nonumber \\
		&=& \left<\tilde{\rho}_{n}, \phi \right>, \label{71}
		\end{eqnarray}
		where $\tilde{\rho}_{n}(x)= \rho_{n}(x+z_{n}),\;\forall\; x \in \mathbb{R}^{N}$. Recalling that
		\begin{equation*}
		\Psi(u_{n}+h+\lambda \phi(\cdot-z_{n}))= \int_{\mathbb{R}^{N}} F(x, u_{n}+h+\lambda \phi(\cdot-z_{n}))
		\end{equation*}
		and
		\begin{equation*}
		\Psi(u_{n}+h)= \int_{\mathbb{R}^{N}} F(x, u_{n}+h),
		\end{equation*}
		we find
		\begin{eqnarray}
		\Psi(u_{n}+h+\lambda \phi(\cdot-z_{n}))&=&\int_{\mathbb{R}^{N}}F(x, u_{n}(x)+h(x)+\lambda \phi(x-z_{n})) \nonumber \\
		&=& \int_{\mathbb{R}^{N}}F(x, u_{n}(x+z_{n})+h(x+z_{n})+\lambda \phi(x)) \nonumber \\
		&=& \int_{\mathbb{R}^{N}}F(x, v_{n}(x)+\tilde{h}(x)+\lambda \phi(x)) \nonumber \\
		&=& \Psi(v_{n}+\tilde{h}+\lambda \phi), \nonumber
		\end{eqnarray}
		where $\tilde{h}(x)=h(x+z_{n}), \;\forall\; x \in \mathbb{R}^{N}$. An analogous argument shows $\Psi(u_{n}+h)=\Psi(v_{n}+ \tilde{h})$. Hence, 
		$$\Psi^{\circ}(u_{n}; \phi(\cdot-z_{n}))= \Psi^{\circ}(v_{n}, \phi).$$
		By (\ref{71}), 
		$$ \Psi^{\circ}(v_{n}, \phi) \geq \left<\tilde{\rho}_{n}, \phi \right>, \;\forall\; \phi \in H^{1}(\mathbb{R}^{N}),$$
		then $\tilde{\rho}_ {n} \in \partial \Psi(v_{n})$. Since $w_{n}=Q'(u_{n})- \rho_{n}$, we also have 
		\begin{eqnarray}
		\left<w_{n}, \phi(\cdot-z_{n}) \right>&=& \left<Q'(u_{n}), \phi(\cdot-z_{n}) \right>- \left<\rho_{n}, \phi(\cdot-z_{n}) \right> \nonumber \\
		&=& \left<Q'(v_{n}), \phi \right>- \left<\tilde{\rho}_{n}, \phi \right>. \nonumber
		\end{eqnarray}
		Setting $\left<\tilde{w}_{n}, \phi \right>=\left<w_{n}, \phi(\cdot-z_{n}) \right>$, we assert
		$$\tilde{w}_{n}= Q'(v_{n})- \tilde{\rho}_{n}.$$
		\begin{claim}
			$\tilde{w}_{n} \in \partial \varphi(v_{n})$.
		\end{claim}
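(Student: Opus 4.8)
The plan is to read off the membership $\tilde{w}_{n} \in \partial \varphi(v_{n})$ directly from the sum-rule decomposition of the generalized gradient of $\varphi$, since all the substantive work has already been carried out in the lines preceding the claim. First I would recall that the energy functional splits as $\varphi = \tfrac{1}{2}Q - \Psi$, where the quadratic part $\tfrac{1}{2}Q$ belongs to $C^{1}(H^{1}(\mathbb{R}^{N}),\mathbb{R})$ with Fr\'echet derivative denoted $Q'$, characterized by $\langle Q'(u),\phi\rangle = (u^{+},\phi)-(u^{-},\phi)$, while $\Psi \in Lip_{loc}(H^{1}(\mathbb{R}^{N}),\mathbb{R})$ is only locally Lipschitz.

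Next I would invoke the sum rule of Lemma \ref{35} together with the fact that the generalized gradient of a $C^{1}$ functional reduces to its derivative (the first lemma of Section \ref{08}). Applying these with the $C^{1}$ part $\tfrac{1}{2}Q$ and the locally Lipschitz part $-\Psi$ gives
$$
\partial \varphi(v_{n}) = Q'(v_{n}) + \partial(-\Psi)(v_{n}).
$$
Using the elementary homogeneity property of the Clarke gradient, $\partial(-\Psi)(v_{n}) = -\partial \Psi(v_{n})$, this becomes
$$
\partial \varphi(v_{n}) = Q'(v_{n}) - \partial \Psi(v_{n}).
$$

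Finally, I would combine this with the two facts already established just above the claim, namely $\tilde{\rho}_{n} \in \partial \Psi(v_{n})$ and the identity $\tilde{w}_{n} = Q'(v_{n}) - \tilde{\rho}_{n}$. Substituting into the displayed decomposition yields
$$
\tilde{w}_{n} = Q'(v_{n}) - \tilde{\rho}_{n} \in Q'(v_{n}) - \partial \Psi(v_{n}) = \partial \varphi(v_{n}),
$$
which is precisely the assertion. I do not expect a genuine obstacle here: the only point requiring care is the correct bookkeeping of the sum rule together with the sign change $\partial(-\Psi) = -\partial \Psi$. The real content, namely the $\mathbb{Z}^{N}$-periodicity and translation argument that produces $\tilde{\rho}_{n} \in \partial \Psi(v_{n})$ and the decomposition $\tilde{w}_{n} = Q'(v_{n}) - \tilde{\rho}_{n}$, has already been established prior to the claim.
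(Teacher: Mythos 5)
Your argument is correct, but it is not the route the paper takes. The paper proves the claim directly from the definition of the generalized gradient, by a translation-invariance argument at the level of $\varphi^{\circ}$: starting from $\left<w_{n},\psi\right>\leq \varphi^{\circ}(u_{n};\psi)$ for all $\psi$, it specializes to $\psi=\phi(\cdot-z_{n})$, uses the definition $\left<\tilde{w}_{n},\phi\right>=\left<w_{n},\phi(\cdot-z_{n})\right>$, and then the change of variables $\varphi^{\circ}(u_{n};\phi(\cdot-z_{n}))=\varphi^{\circ}(v_{n};\phi)$ (which holds by the $\mathbb{Z}^{N}$-periodicity of $V$ and $F$) to conclude $\left<\tilde{w}_{n},\phi\right>\leq\varphi^{\circ}(v_{n};\phi)$. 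You instead invoke the sum rule of Lemma \ref{35} to get the decomposition $\partial\varphi(v_{n})=Q'(v_{n})-\partial\Psi(v_{n})$ and then substitute the two facts established just before the claim, namely $\tilde{\rho}_{n}\in\partial\Psi(v_{n})$ and $\tilde{w}_{n}=Q'(v_{n})-\tilde{\rho}_{n}$. This is legitimate: Lemma \ref{35} gives equality of the gradient sets (not merely the inclusion $\partial(I+J)\subset\partial I+\partial J$ that holds for arbitrary locally Lipschitz summands), so the direction you need, $Q'(v_{n})-\partial\Psi(v_{n})\subset\partial\varphi(v_{n})$, is available, and the same decomposition is used elsewhere in the paper (in the verification of condition $(H)$ and in Lemma \ref{69}). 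What your route buys is economy --- it recycles the translation work already done for $\Psi^{\circ}$ rather than redoing it for $\varphi^{\circ}$; what the paper's route buys is self-containment, since it needs only the defining inequality of $\partial\varphi$ and periodicity, without appealing to the sum rule or to the sign convention $\partial(-\Psi)=-\partial\Psi$. The only point to keep straight in your version is the bookkeeping between $\varphi=\tfrac{1}{2}Q-\Psi$ and the paper's convention for $Q'$, but that is a notational matter inherited from the paper, not a gap in your argument.
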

	As $w_{n} \in \partial \varphi(u_{n})$, then
		$$ \left<w_{n}, \phi \right> \leq \varphi^{\circ}(u_{n}; \phi),\; \forall\; \phi \in H^{1}(\mathbb{R}^{N}).$$
		Thereby
		\begin{eqnarray}
		\varphi^{\circ}(u_{n}; \phi(\cdot-z_{n})) &\geq& \left<w_{n}, \phi(\cdot-z_{n}) \right> \nonumber \\
		&=& \left<\tilde{w}_{n}, \phi \right>,\;\forall\; \phi \in H^{1}(\mathbb{R}^{N}). \nonumber
		\end{eqnarray}
		On the other hand, a simple change variable implies 
		$$\varphi^{\circ}(u_{n}; \phi(\cdot-z_{n}))=\varphi^{\circ}(v_{n}; \phi),$$
		then
		$$\left<\tilde{w}_{n}, \phi \right> \leq \varphi^{\circ}(v_{n}; \phi) ,\;\forall\; \phi \in H^{1}(\mathbb{R}^{N}),$$
	proving the claim. 	
	
	Now, by definition of $\tilde{w}_{n}$, it is easy do check that
		$$||\tilde{w}_{n}||_{*} \leq ||w_{n}||_{*},\;\forall\; n \in \mathbb{N}.$$ 
		Therefore,
		$$0 \leq \lambda_{\varphi}(v_{n}) \leq ||\tilde{w}_{n}||_{*} \leq ||w_{n}||_{*}\rightarrow 0,$$ 
		that is,
		$$\lambda_{\varphi}(v_{n})\rightarrow 0,\;\;\mbox{as}\;\; n\rightarrow +\infty.$$
	Now our goal is to prove that 
	$$-\Delta v(x) + V(x) v(x) \in [\underline{f}(x,v(x)), \overline{f}(x,v(x))]\;\;\mbox{a.e in}\;\; \mathbb{R}^{N},$$
	where $v$ is the weak limit of $(v_n)$ in $H^{1}(\mathbb{R}^N)$. 
	
	From the study above, there exists $(\tilde{\omega}_{n}) \subset \partial \varphi(v_{n})$ such that $\tilde{\omega}_{n}= Q'(v_{n})- \tilde{\rho}_{n}$ and $||\tilde{\omega}_{n}||_{*}=o_{n}(1)$ where $(\tilde{\rho}_{n}) \subset \partial \Psi(v_{n})$. For $\phi \in H^{1}(\mathbb{R}^{N})$, we obtain
	\begin{eqnarray}
	\left <\tilde{\rho}_{n}, \phi \right>= \left <Q'(v_{n}), \phi \right>-\left <\tilde{\omega}_{n}, \phi \right>\rightarrow \left <Q'(v), \phi \right>, \;\mbox{as}\;\;n\rightarrow +\infty,\nonumber
	\end{eqnarray}
	that is, $\tilde{\rho}_{n} \stackrel{*}{\rightharpoonup} Q'(v)$ in $(H^{1}(\mathbb{R}^{N}))^{*}$. Then, by Proposition \ref{7},  $Q'(v) \in \partial \Psi (v)$. Thereby, $Q'(v)= \rho \in \partial \Psi (v)$, and so, 
	$$\int_{\mathbb{R}^{N}}(\nabla v \nabla \phi + V v \phi)= \int_{\mathbb{R}^{N}} \rho \phi\;\;\mbox{for all}\;\;\phi \in H^{1}(\mathbb{R}^{N}),$$
	where $\rho(x) \in [\underline{f}(x,v(x)), \overline{f}(x,v(x))]\;\;\mbox{a.e in}\;\; \mathbb{R}^{N}.$ Hence
	$$
	\left\{\begin{aligned}
	-\Delta v + V(x)v &= \rho(x)\;\;\mbox{in}\;\;\mathbb{R}^{N}, \\
	v \in H^{1}(\mathbb{R}^{N}).
	\end{aligned}
	\right.
	$$
Since $\rho \in L_{loc}^{\frac{p}{p-1}}(\mathbb{R}^{N})$, the elliptic regularity theory gives that $v \in W_{loc}^{2,\frac{p}{p-1}}(\mathbb{R}^{N})$ and
	$$-\Delta v + V(x)v = \rho(x)\;\;\mbox{a.e in}\;\; \mathbb{R}^{N},$$
that is,
$$
-\Delta v(x) + V(x) v(x) \in [\underline{f}(x,v(x)), \overline{f}(x,v(x))]\;\;\mbox{a.e in}\;\; \mathbb{R}^{N},
$$
finishing the proof. \end{proof}

	
\end{document}